\date{\today}
\newtheorem{theorem}{Theorem}[section]
\newtheorem{lemma}[theorem]{Lemma}
\newtheorem{proposition}[theorem]{Proposition}
\newtheorem{corollary}[theorem]{Corollary}
\theoremstyle{definition}
\newtheorem{definition}[theorem]{Definition}
\newtheorem{example}[theorem]{Example}
\theoremstyle{remark}
\newtheorem{remark}[theorem]{Remark}
\def\<{\langle}
\def\>{\rangle}
\def\a{\alpha}
\def\k{\Bbbk}
\def\ci{\circ}
\def\c{\cdot}
\def\D{\Delta}
\def\i{\iota}
\def\r{\rho}
\def\o{\otimes}
\def\t{\tau}
\def\v{\varepsilon}
\def\<{\langle}
\def\>{\rangle}
\def\m{\mu}
\begin{document}

\begin{center}

{\huge{\bf Smash coproducts of bicomonads and Hom-entwining structures}}

\end{center}

\ \\
\begin{center}
{\bf Xiaohui ZHANG$^{1\ast}$, Wei WANG$^2$, Xiaofan ZHAO$^3$
}
\end{center}

\ \\
\hspace{-0,5cm}$^{1}$ Corresponding author, E-mail: zxhhhhh@hotmail.com.  School of Mathematical Sciences, Qufu Normal University, Qufu Shandong 273165, P. R. China.\\
\hspace{-0,5cm}$^{2}$ Department of Mathematics, Southeast University, Nanjing Jiangsu 210096, P. R. China.\\
\hspace{-0,5cm}$^{3}$ College of Mathematics and Information Science, Henan Normal University,
Xinxiang Henan 453007, P. R. China.\\

{\bf Abstract} Let $F,G$ be bicomonads on a monoidal category $\mathcal{C}$. The aim of this paper is to discuss the smash coproducts of $F$ and $G$. As an application,
the smash coproduct of Hom-bialgebras is discussed. Further, the Hom-entwining structure and Hom-entwined modules are investigated.
\vspace{0.5cm}

{\bf Keywords.} bicomonad; smash coproduct; Hom bialgebra; Hom-entwining structure

{\bf MSC 2010:} 16T15; 16W30

\section{\textbf{Introduction}}

The study of Hom-type algebras arises from the research on Witt and Virasoro type algebras.
In 2006, Hartwig, Larsson and Silvestrov introduced the Hom-Lie algebras which are related to $q$-deformations of Witt and Virasoro algebras (see \cite{JDS}). In a Hom-Lie algebra, the Jacobi identity is replaced by the so called Hom-Jacobi identity via a homomorphism.
In 2008, Makhlouf and Silvestrov (see \cite{AS2}) introduced the definition of Hom-associative algebras,
where the associativity of a Hom-algebra is twisted by an endomorphism (here we call it the Hom-structure map). The
generalized notions, including Hom-bialgebras, Hom-Hopf algebras were developed in \cite{af2}, \cite{AS1}, \cite{AS4}.
Further research on various Hom-Lie structures and Hom-type algebras can be found in \cite{Ag}, \cite{gmmp}, \cite{dy30}, and so on. Quasitriangular Hom-bialgebras were introduced by D. Yau (\cite{dy2}, \cite{dy3}), to provided a solution of the quantum Hom-Yang-Baxter equation, a twisted version of the quantum Yang-Baxter equation (\cite{dy4}, \cite{dy5}).
The Hom-Yetter-Drinfeld modules were investigated by Makhlouf and Panaite in\cite{af2}, \cite{AF} and \cite{CoDr}.

An interesting question is to explain Hom-type algebras use the theory of monoidal categories. In 2011, in order to provide a categorical approach to Hom-type algebras, Caenepeel and Goyvaerts (\cite{CG}) introduced the notions of Hom-categories and monoidal Hom-Hopf algebras. In a Hom-category, the associativity and unit constraints are twisted by the Hom-structure maps. A (co)monoid in a Hom-category is a Hom-(co)algebra, and a bimonoid in a Hom-category is a monoidal Hom-bialgebra.
Note that the main feature of a monoidal Hom-bialgebra is that the Hom-structure maps over the algebra structure and the coalgebra structure are invertible with each other, hence a monoidal Hom-bialgebra is a Hom-bialgebra if and only if the Hom-structure map $\a$ satisfies $\a^2=id$.

The motivation of this paper is to define and study the Hom-type entwining structures of a Hom-algebra and a Hom-coalgebra.
Entwining structure is proposed by T.Brzezinski and S.Majid in \cite{ts} in defining coalgebra principal bundles. The
relevance of entwining structures is raised by the observation of
M. Takeuchi that they provide examples of corings. An entwining structure over a monoidal category $\mathcal{C}$ consists of an algebra $A$, a coalgebra
$C$ and a morphism $\varphi:C\o A\rightarrow A \o C$ satisfying some axioms (\cite{MB}). The entwining modules are both $A$-modules and $C$-comodules, with compatibility relation given by $\varphi$. Note that the definition of entwined modules generalizes lots of important modules such as Hopf modules, Doi-Hopf modules, Long-dimodules and Yetter-Drinfeld modules.

Since a monoidal Hom-(co)algebra is a (co)monoid in the so-called Hom category (in \cite{CG}), it is not hard to realize that the entwining structure of a monoidal Hom-algebra and a monoidal Hom-coalgebra is actually an entwining structure in a non-strict monoidal category (see \cite{sk}, or \cite{gzw}). However, the Hom-type entwining structures over the Hom-type (not monoidal Hom-type) algebras and coalgebras are more complicated, because such a monoidal category $\mathcal{C}$ is not exist: the Hom-(co)algebra is a (co)monoid in $\mathcal{C}$.

Recall from \cite{sgz1} that an entwining structure $(C,A,\varphi)$ is in correspondence with the smash product structure $(A,C^\ast,R)$ (see \cite{sgz1}, Theorem 8), or equivalently, in correspondence with the smash coproduct structure $(A^\ast,C,V)$ (see \cite{sgz1}, Theorem 12). Note that in a monoidal category $\mathcal{C}$, if $X$ and $Y$
are (co)algebras, $S:X \o Y \rightarrow Y \o X$ is a morphism in $\mathcal{C}$, then
$\_ \o X$ and $\_ \o Y$ are all (co)monads on $\mathcal{C}$, and
$(X,Y,S)$ is a so-called smash (co)product structure if and only if $S$ induces a (co)monad distributive law
$\_ \o S: \_ \o X \o Y \Rightarrow \_ \o Y \o X$ in $\mathcal{C}$ (see \cite{CG1}, \cite{Rw}). Consequently, we can get that an entwining structure is in correspondence with the (co)monad distributive law.
Inspired by this conclusion, we can define the Hom-type entwining structure through a similar way.

The research on monads and comonads began in the 1950's. The earlier article about the theory of monads can be found in \cite{jb}, \cite{Se} and \cite{Sm} and so on.
In 2002, Moerdijk (\cite{Moe}) used a comonoidal monad to define a bimonad. Although Moerdijk called his bimonad "Hopf monad", the antipode was not involved in his definition. In 2007, A.Bruguieres and A.Virelizier (\cite{ABAV}) introduced the notion of Hopf monad with antipode in another direction, which is different from Moderijk. Because of their close connections with the monoidal structures, the theory of A.Bruguieres and A.Virelizier had developed rapidly and got many fundamental achievements (see \cite{ABA}, \cite{AB}, \cite{G2}, etc.).

In 2015, Zhang and Wang (see \cite{zw}) showed that the tensor functor $\_ \o H$ of a Hom-bialgebra $H$ is a bimonad on a special monoidal category, and a bicomonad on another monoidal category.
Hence we can use the theory of (co)monads to define the smash (co)products of Hom-(co)algebras, and the Hom-type entwining structures.
Therefore one is prompted to ask several questions:

$\bullet$ Does a (co)monad distributive law induced a smash (co)product of Hom-(co)algebras?

$\bullet$ Is it possible to characterize Hom-type entwining structures obtained from (co)monad distributive laws or the smash (co)products of Hom-(co)algebras?

$\bullet$ What are the Hom-entwining structures and Hom-entwined modules? Are the same as the monoidal Hom-case?

$\bullet$ When does the category of the Hom-entwined modules becomes a monoidal category?

$\bullet$ Does the Hom-entwined modules can be view as the generalization of some important Hom-type modules such as
Hom-Yetter-Drinfeld modules, Hom-Long dimodules?

The aim of this paper is to answer these questions.
In order to investigate these questions, we always assume that the Hom-(co)algebras in this paper is (co)unital, and the Hom structure map $\a$ and its (co)representations are all bijective maps.

The paper is organized as follows. In Section 2 we recall some basic notions such as comonads, comonad distributive laws and Hom-(co)algebras.
In section 3, we use the theory of 2-categories to give some necessary and sufficient conditions for the category of bicomodules of a comonad distributive law being monoidal.
In section 4, we mainly show that the comonad distributive laws can deduce the smash coproduct of two Hom-coalgebrs $H$ and $B$, and
describe the Hom-coalgebra structure of the smash coproduct $B \o H$. We also give the necessary and sufficient conditions for $B \o H$ being a Hom-bialgebra.
In section 5,
We take $B = {A^{\ast}}^{cop}$ to investigate the definition of the Hom-entwining structures and the $n$-th Hom-entwined modules, and discuss the monoidal structure on the category of $n$-th Hom-entwined modules. We find that $\mathcal{M}(\Phi)^H_A(n)$ is isomorphic to the corepresentations of the Hom-type smash coproduct structures. The notion of Hom-monoidal entwining structures is also introduced, and we prove that $\mathcal{M}(\Phi)^H_A(n)$ is a monoidal category if and only the Hom-entwining structure is actually a Hom-monoidal entwining structure.
Finally, in section 6, we consider the Hom-Doi-Hopf modules, Hom-Yetter-Drinfeld modules and Hom-Long dimodules for application. The Hom-type $\mathcal{D}$-Equation and Hom-type Yang-Baxter Equation are also discussed.

\section{\textbf{Preliminaries}}
\def\theequation{2.\arabic{equation}}
\setcounter{equation} {0} \hskip\parindent

Throughout the paper, we let $\mathbb{Z}$ be the set of all integers,
let $\k$ be a fixed
field and $char(\k)=0$ and $Vec_\k$ be the category of finite dimensional $\k$-spaces. All the (Hom-type) algebras and coalgebras, (Hom-type) modules and comodules are supposed to be in $Vec_\k$. For the comultiplication
 $\D $ of a $\k$-module $C$, we use the Sweedler-Heyneman's notation: $\Delta(c)=c_{1}\o c_{2}$ for any $c\in C$.

\vskip 0.5cm
 {\bf 2.1. Comonads and bicomonads.}
\vskip 0.5cm

Let $\mathcal {C}$ be a category, $F$: $\mathcal {C}$$\rightarrow$$\mathcal {C}$ be a functor. Recall from \cite{jb} or \cite{ROS} that if there exist natural transformations $\Delta$: $F \rightarrow FF$ and $\varepsilon$: $F \rightarrow id_{\mathcal {C}}$, such that the following identities hold
$$
F\delta \circ \Delta = \delta F\circ \Delta,~~\mbox{and}~~id_F = F\varepsilon\circ \Delta = \varepsilon F\circ \Delta,
$$
then we call the triple $(F, \Delta, \varepsilon)$ a \emph{comonad} on $\mathcal {C}$.

Let $X \in \mathcal {C}$, and $(F, \Delta, \varepsilon)$ a comonad on $\mathcal {C}$. If there exists a morphism $\rho^X$: $X$$\rightarrow$$FX$, satisfying
$$
G{\rho^X}\circ \rho^X = \Delta_X\circ \rho^X ,~~\mbox{and}~~\varepsilon_B\circ \rho^X = id_X,
$$
then we call the couple $(X, \rho^X)$ an \emph{F-comodule}.

A morphism between $F$-comodules $g$: $X \rightarrow X'$ is called $F$-colinear in $\mathcal {C}$, if $g$ satisfies: $Fg \circ \rho^X = \rho^{X'} \circ g$. The category of $F$-comodules is denoted by $\mathcal {C}^F$.

Let $\mathcal {C}$ be a category on which $(F, \Delta, \varepsilon)$ and $(G, \delta, \epsilon)$ are two comonads. A natural transformation $\varphi$: $FG$$\rightarrow$$GF$ is called a \emph{comonad distributive law}, if $\varphi$ induces the following commutative diagrams:
$$\aligned
\xymatrix
{
FG\ar[d]_{\varphi}\ar@{}"2,3"|{(L1)}\ar"1,2"^-{F \delta} & FGG \ar[r]^{\varphi G} & GFG \ar[d]^{G\varphi} \\
GF\ar"2,3"_{\delta F} & & GGF,
}~~
\xymatrix{
FG \ar@{}"2,3"|{(L2)}\ar[d]_{\varphi} \ar[r]^{\Delta G} & FFG \ar[r]^{F\varphi } & FGF \ar[d]^{\varphi F} \\
   GF \ar"2,3"_{G\Delta} &  & GFF, }
\endaligned$$
$$\aligned
\xymatrix{
FG \ar[r]^{\varphi} \ar[dr]_{\varepsilon G}^{(L3)} & GF \ar[d]^{G\varepsilon}  \\
 & G, }~~~~
\xymatrix{
FG \ar[r]^{\varphi} \ar[d]_{F\epsilon} & GF \ar[dl]^{\epsilon F}_{(L4)} \\
F }
\endaligned$$

\begin{remark}
Recall from \cite{GTR}, we know that $\varphi$ is a comonad distributive law if and only if one of the following statements hold:

(1). there is a lifting $\hat{F}$:$\mathcal {C}^G \rightarrow \mathcal {C}^G$ induced by $F$;

(2). $(FG, F\varphi G \circ \Delta\delta, \varepsilon\epsilon)$ is a comonad on $\mathcal {C}$.
\end{remark}

\begin{definition}
If $\varphi: FG\rightarrow GF$ is a comonad distributive law, then we call the comonad $FG=(FG, F\varphi G \circ \Delta\delta, \varepsilon\epsilon)$ the \emph{smash coproduct} of $F$ and $G$.
\end{definition}


Let $\mathcal {C}$ be a category, $\varphi:FG\rightarrow GF$ be a comonad distributive law, $M \in \mathcal {C}$, $(M, \theta^M)$ an $F$-comodule, and $(M, \rho^M)$ a $G$-comodule. If the diagram
$$\aligned
\xymatrix{
M \ar[d]_{\rho^M} \ar[r]^{\theta^M} & FM \ar[r]^{F(\rho^M)} & FGM \ar[d]^{\varphi_M}  \\
   GM \ar"2,3"^{G(\theta^M)} &  & GFM   }
\endaligned$$
is commutative, then we call the triple $(M, \theta^M, \rho^M)$ an \emph{$(F,G)$-bicomodule} or \emph{$\varphi$-bicomodules}.

A morphism between two $\varphi$-bicomodules is called a \emph{bicomodule morphism} if it is both $F$-colinear and $G$-colinear. The category of $\varphi$-bicomodules
 is denoted by $\mathcal {C}^{(F,G)}(\varphi)$.

\begin{remark}
Recall from \cite{GTR}, we know that the $(F,G)$-bicomodule is equivalent to $FG$-comodule.
\end{remark}

Let $(\mathcal {C}, \otimes, I,a,l,r)$ be a monoidal category, $(G, \delta, \epsilon)$ a comonad on $\mathcal {C}$, and $G$ also a monoidal functor, i.e., there exists a natural transformation $G_2$: $G\otimes G \rightarrow G\otimes$ and a morphism $G_0$: $I \rightarrow G(I)$ in $\mathcal {C}$, such that for any $X, Y, Z \in \mathcal {C}$, the following equations hold:
\begin{eqnarray*}
&&G_2(X,Y\otimes Z) \circ (id_{G(X)} \otimes G_2(Y, Z))\circ a_{GX,GY,GZ}\\
&&~~~~~~~~~~ = G(a_{X,Y,Z})\circ G_2(X\otimes Y, Z)\circ(G_2(X,Y)\otimes id_{G(Z)} ),\\
 &&G(r_X)\circ G_2(X,I)\circ(id_{G(X)}\otimes G_0)\circ r^{-1}_{GX} \\
 &&~~~~~~~~~~= id_{G(X)} = G(l_X)\circ G_2(I,X)\circ(G_0 \otimes id_{G(X)})\circ l^{-1}_{GX} .
\end{eqnarray*}
Then recall from \cite{ABAV} that $G$ is called a \emph{bicomonad} (or a \emph{monoidal comonad}) on $\mathcal {C}$ if the following identities hold
\\
$\left\{\begin{array}{l}
(C1)~~G(G_2(X,Y)) \ci G_2(GX,GY) \ci (\delta_X \o \delta_Y) = \delta_{X \o Y} \ci G_2(X,Y);\\
(C2)~~\epsilon_{X \o Y} \ci G_2(X,Y) = \epsilon_X \o \epsilon_Y;\\
(C3)~~ G(G_0) \ci G_0 = \delta_I \ci G_0;\\
(C4)~~\epsilon_I\circ G_0 = id_I.
\end{array}\right.$
\\

From the above definition one can see that $(I, G_0) \in \mathcal {C}^G$.

\begin{remark}
Suppose that $(G,\delta,\epsilon)$ is a comonad, and $(G,G_2,G_0)$ is a monoidal functor on a monoidal category $(\mathcal {C},\o,I,a,l,r)$. If we define the $G$-coaction on $I$ by $G_0$, and define the following monoidal structure
$$\rho^{M \o N}:
\xymatrix{
M \o N \ar[rr]^{\rho^M \o \rho^N} && GM \o GN \ar[rr]^{G_2(M,N)} && G(M \o N),
}
$$
for any $(M, \rho^M), (N,\rho^N) \in \mathcal {C}^G$,
then $(\mathcal {C}^G, \o ,I,a,l,r)$ is a monoidal category
if and only if $G=(G,\delta,\varepsilon,G_2,G_0)$ is a bicomonad on $\mathcal {C}$.
\end{remark}

\vskip 0.5cm
 {\bf 2.2. Hom-bialgebras and Hom-Hopf algebras}
\vskip 0.5cm

In this section, we will review several definitions and notations related to (finite dimensional) Hom-bialgebras.

Recall from \cite{jjr} that
a \emph{Hom-algebra} over $\k$ is a quadruple $(A,\mu,1_A,\alpha)$, in which $A$ is a $k$-module, $\alpha:A\rightarrow A$, $\mu: A \o A\rightarrow A$ are linear maps, and $1_A\in A$, satisfying the following conditions, for all $a,b,c \in A$:
\begin{eqnarray*}
&\a(a)(bc) = (ab)\a(c),~~~~\a(1_A) = 1_A,~~~~1_Aa = a1_A = \a(a).
\end{eqnarray*}

Recall from \cite{jjr} that
a \emph{Hom-coalgebra} over $\k$ is a quadruple $(C,\a,\Delta,\epsilon)$, in which $C$ is a $k$-module, $\a:C\rightarrow C$, $\Delta: C\rightarrow C\o C$ and $\epsilon:C\rightarrow \k$ are linear maps, with notation $\Delta(c) = c_1 \o c_2$, satisfying the following conditions for all $c \in C$:
\begin{eqnarray*}
&\epsilon \circ \a = \epsilon,~~~~\a(c_1) \o \Delta(c_{2}) = \Delta(c_1) \o \a(c_2),~~~~\epsilon(c_1)c_2 = c_1\epsilon(c_2) = \a(c).
\end{eqnarray*}

Note that in the earlier definition of Hom-(co)algebras by Makhlouf and Silvestrov (see \cite{AS1} or \cite{AS2}),
an axiom was redundant as shown in \cite{jjr}. The reader will easily
check that the definition above is equivalent to the one in those papers.

 Let $(H,\alpha)$ be a Hom-algebra.  A left \emph{$(H,\alpha)$-Hom-module} is a triple $(M,\a_M, \theta_M)$, where $M$ is a $\k$-space, $\a_M:M\rightarrow M$ and $\theta_M:H \o M\rightarrow M$ are linear maps with notation $\theta_M(h \o m) = h\cdot m$, satisfying the following conditions, for all $h,h'\in H$, $m\in M$,
\\
$\left\{\begin{array}{l}
(1)~~\a(h)\cdot(h'\cdot m) = (hh')\cdot\a_M(m);\\
(2)~~1_H\cdot m = \a_M(m).
\end{array}\right.$
\\

A \emph{morphism $f:M\rightarrow N$ of Hom-modules} is a $\k$-linear map such that $\theta_N\circ (id_H \o f) = f\circ \theta_M$.

Let $C$ be a Hom-coalgebra. Recall that a \emph{right $C$-comodule} is a triple $(M,\a_M, \rho^M)$, where $M$ is a $\k$-module, $\a_M:M\rightarrow M$ and $\rho^M:M\rightarrow M \o C$ are linear maps with notation $\rho^M(m) = m_0\o m_1$, satisfying the following conditions for all $m \in M$:\\
$\left\{\begin{array}{l}
(1)~~\a_M(m_0)\o \Delta(m_1) = \rho^M(m_0) \o \a(m_1);\\
(2)~~\epsilon(m_1)m_0 = \a_M(m).
\end{array}\right.$
\\

\emph{A morphism $f:M\rightarrow N$ of $C$-comodules} is a linear map such that $\rho^N\circ f=(id_C \o f)\circ \rho^M$.

Recall that in the earlier definition of Hom-(co)modules by Makhlouf and Silvestrov, there is also a redundant axiom (see \cite{jjr} for details).

Recall from \cite{AS4} that
a \emph{Hom-bialgebra} over $\Bbbk$ is a sextuple $H=(H,\mu,\eta,\Delta,\varepsilon,\a)$, in which $(H, \a,
\m,\eta)$ is a Hom-algebra, $(H,\alpha,\Delta,\varepsilon)$ is a Hom-coalgebra,
and $\D,\v$ are morphisms of Hom-algebras preserving unit.

Recall from \cite{AS4} that
a \emph{Hom-Hopf algebra} over $\Bbbk$ is
a Hom-bialgebra $(H, \alpha)$ together with a $\Bbbk$-linear map
$S:H\rightarrow H$ (called \emph{the antipode}) such
that
$$S\ast id=id\ast S=\eta\varepsilon,~S\circ\alpha=\alpha\circ S.$$

Let $(H, \a)$ be a Hom-Hopf algebra. Note that if $\a$ is invertible, then for all $a, b \in H$, $S$ satisfies
\begin{eqnarray*}
&S(ab) = S(b)S(a),~~S(1_H) = 1_H,\\
&\Delta(S(a)) = S(a_2) \o S(a_1),~~\varepsilon\circ S=\varepsilon.
\end{eqnarray*}

\section{\textbf{The monoidal structure of $\mathcal {C}^{(F,G)}(\varphi)$}}
\def\theequation{3.\arabic{equation}}
\setcounter{equation} {0} \hskip\parindent

Throughout this section, assume that
$(\mathcal {C}, \otimes, I, a, l, r)$ is a monoidal category on which $(F, \Delta, \varepsilon)$ and $(G, \delta, \epsilon)$ are bicomonads, and $\varphi:FG\rightarrow GF$ is a comonad distributive law.

Notice that for any $X \in \mathcal {C}$, if we define
$$\theta^{FGX}:
\xymatrix@C=0.5cm{
   FGM \ar[rr]^{\Delta_{GM}} && FFGM } $$
and
$$\rho^{FGX} :
\xymatrix@C=0.5cm{
  FGX \ar[rr]^{F(\delta_X)} && FGGX \ar[rr]^{\varphi_{GX}} && GFGX}, $$
then it is easy to check that $(FGX, \theta^{FGX}, \rho^{FGX}) \in \mathcal {C}^{(F,G)}(\varphi)$.

\begin{lemma}
Let $(X, \theta^X, \rho^X)$ and $(Y, \theta^Y, \rho^Y)$ be objects in $\mathcal {C}^{(F,G)}(\varphi)$. If the $F$-coaction $\theta^{X\otimes Y}$ and $G$-coaction $\rho^{X\otimes Y}$ on $X\otimes Y$ are given by
$$\theta^{X\otimes Y} :
\xymatrix@C=0.5cm{
X\otimes Y\ar[rr]^{\theta^X\otimes \theta^Y} && FX \otimes FY \ar[rr]^{F_2(X,Y)} && F(X\otimes Y)} $$
and
$$\rho^{X\otimes Y} :
\xymatrix@C=0.5cm{
   X\otimes Y\ar[rr]^{\rho^X\otimes \rho^Y} && GX\otimes GY \ar[rr]^{G_2(X,Y)} && G(X \otimes Y)}, $$
then $(\mathcal {C}^{(F,G)}(\varphi),\otimes,I,a,l,r)$ is a monoidal category if and only if $(F, G, \varphi)$ satisfies the following equations for any $M,N \in \mathcal {C}$:

a) $\varphi_{M\otimes N}\circ F(G_2(M,N))\circ F_2(GM,GN) = G(F_2(M,N))\circ G_2(FM,FN)\circ (\varphi_M \otimes \varphi_N)$;

b) $\varphi_I \circ F(G_0) \circ F_0 = G(F_0)\circ G_0$.
\end{lemma}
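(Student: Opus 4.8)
The plan is to reduce the monoidality of $\mathcal{C}^{(F,G)}(\varphi)$ to the single requirement that the prescribed coactions genuinely define $(F,G)$-bicomodules on tensor products and on the unit object, and then to recognize that requirement as exactly (a) and (b). First I would observe that almost all of the monoidal axioms are automatic. Since $F$ is a bicomonad, the Remark characterizing when $\mathcal{C}^F$ is monoidal shows that $(\mathcal{C}^F,\otimes,I,a,l,r)$ is monoidal with $F$-coaction $F_2(X,Y)\circ(\theta^X\otimes\theta^Y)$ and unit coaction $F_0$; in particular $a,l,r$ are $F$-colinear, $\otimes$ is bifunctorial on $F$-colinear maps, and the pentagon and triangle identities hold. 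The same holds for $G$ with $G_2$ and $G_0$. Hence $a,l,r$ are simultaneously $F$- and $G$-colinear, so they are bicomodule isomorphisms; $f\otimes g$ is a bicomodule morphism whenever $f,g$ are; and as the forgetful functor $\mathcal{C}^{(F,G)}(\varphi)\to\mathcal{C}$ is faithful, the coherence identities are inherited from $\mathcal{C}$. Thus $\mathcal{C}^{(F,G)}(\varphi)$ is monoidal if and only if $(X\otimes Y,\theta^{X\otimes Y},\rho^{X\otimes Y})$ is an $(F,G)$-bicomodule for all bicomodules $X,Y$ and $(I,F_0,G_0)$ is an $(F,G)$-bicomodule.

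Next I would establish the sufficiency of (a) and (b). Writing $\xi_X=F(\rho^X)\circ\theta^X$, I would expand the bicomodule compatibility diagram for $X\otimes Y$, insert the definitions of $\theta^{X\otimes Y}$ and $\rho^{X\otimes Y}$, and slide the comultiplications past $F_2$ and $G_2$ using their naturality. The compatibility diagrams of $X$ and $Y$, in the form $G(\theta^X)\circ\rho^X=\varphi_X\circ\xi_X$, then rewrite the two sides as
$$P_{M,N}\circ(\xi_X\otimes\xi_Y)\quad\text{and}\quad Q_{M,N}\circ(\xi_X\otimes\xi_Y),$$
where $M,N$ denote the underlying objects of $X,Y$, $P_{M,N}=\varphi_{M\otimes N}\circ F(G_2(M,N))\circ F_2(GM,GN)$ and $Q_{M,N}=G(F_2(M,N))\circ G_2(FM,FN)\circ(\varphi_M\otimes\varphi_N)$. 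Since (a) asserts precisely $P_{M,N}=Q_{M,N}$, the two sides agree and $X\otimes Y$ is a bicomodule; moreover the compatibility diagram for $(I,F_0,G_0)$ reads exactly $\varphi_I\circ F(G_0)\circ F_0=G(F_0)\circ G_0$, which is (b).

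For the necessity of (a) and (b) I would evaluate on the cofree bicomodules $FGM,FGN$ exhibited just before the statement. Closure under $\otimes$ gives $P_{FGM,FGN}\circ(\xi_{FGM}\otimes\xi_{FGN})=Q_{FGM,FGN}\circ(\xi_{FGM}\otimes\xi_{FGN})$. Post-composing with $GF(e_M\otimes e_N)$, where $e_M=\epsilon_M\circ\varepsilon_{GM}$ is the counit of the smash coproduct, and using the naturality of $P$ and $Q$ (they are assembled from the natural transformations $\varphi,F_2,G_2$), this becomes $P_{M,N}\circ\Theta=Q_{M,N}\circ\Theta$ with $\Theta=(FG(e_M)\circ\xi_{FGM})\otimes(FG(e_N)\circ\xi_{FGN})$. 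The heart of the argument is the retraction identity $FG(e_M)\circ\xi_{FGM}=id_{FGM}$: unwinding $\xi_{FGM}=F(\varphi_{GM})\circ FF(\delta_M)\circ\Delta_{GM}$, one pushes the counit through by means of the distributive-law axiom (L3) written as $G(\varepsilon_{GM})\circ\varphi_{GM}=\varepsilon_{GGM}$, then the naturality of $\varepsilon$, and finally the counit axioms $F(\varepsilon)\circ\Delta=id$ and $G(\epsilon)\circ\delta=id$, so that the composite telescopes to the identity. Consequently $\Theta=id$, which forces $P_{M,N}=Q_{M,N}$, i.e. (a); treating the unit object in the same fashion yields (b).

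I expect the telescoping retraction identity $FG(e_M)\circ\xi_{FGM}=id_{FGM}$ to be the main obstacle, as it is the one place where the distributive-law axioms and the two comonad structures must be combined; the remaining steps are bookkeeping with the naturality of $F_2,G_2,\varphi$ together with the fact that $F$ and $G$ are individually bicomonads.
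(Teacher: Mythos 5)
Your proposal is correct and follows essentially the same route as the paper: sufficiency by expanding the compatibility diagram for $X\otimes Y$ via naturality of $F_2$, $G_2$ and the bicomodule conditions of the factors, and necessity by tensoring the cofree bicomodules $FGM$, $FGN$ and post-composing with $GF(\epsilon_M\otimes\epsilon_N)\circ GF(\varepsilon_{GM}\otimes\varepsilon_{GN})=GF(e_M\otimes e_N)$. Your explicit retraction identity $FG(e_M)\circ\xi_{FGM}=id_{FGM}$ (proved via (L3), naturality of $\varepsilon$, and the counit axioms) is exactly the cancellation the paper leaves implicit in its phrase ``we immediately get the conclusion a)'', so your write-up is the same argument carried out in more detail.
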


\begin{proof}
$\Rightarrow)$: By the assumption, we have $(FGM\otimes FGN,\theta^{FGM\otimes FGN}, \rho^{FGM\otimes FGN}) \in \mathcal {C}_F^G(\varphi)$ for any $M,N \in \mathcal {C}$, i.e.
\begin{eqnarray*}
&&~~~~G(F_2(FGM,FGN))\circ G(\Delta_{GM}\otimes\Delta_{GN})\circ G_2(FGM,FGN)\circ (\varphi_{GM}\otimes\varphi_{GN})\circ (F\delta_M \otimes F\delta_N) \\
&&= \varphi_{FGM\otimes FGN}\circ F(G_2(FGM,FGN))\circ F(\varphi_{GM}\otimes \varphi_{GN})\circ F(F\delta_M \otimes F\delta_N)\circ F_2(FGM,FGN)  \\
&&~~~~\circ (\Delta_{GM}\otimes\Delta_{GN}).
\end{eqnarray*}
Multiplied by $GF(\epsilon_M\otimes\epsilon_N)\circ GF(\varepsilon_{GM}\otimes\varepsilon_{GN})$ right on both sides of the above identity, we immediately get the conclusion a). Since $(I,F_0,G_0) \in \mathcal {C}^{(F,G)}(\varphi)$, one can see that b) holds.

$\Leftarrow)$: For any $M,N \in \mathcal {C}^{(F,G)}(\varphi)$, it is easy to show that $(M\otimes N, \theta^{M\otimes N}) \in \mathcal {C}^F$ and $(M\otimes N, \rho^{M\otimes N}) \in \mathcal {C}^G$.
Then from the following commutative diagram
$$\aligned
\xymatrix{
M\otimes N \ar[rr]^{\theta^M\otimes \theta^N}\ar[d]_{\rho^M\otimes \rho^N} & & FM\otimes FN \ar[r]^{F_2(M,N)} \ar[d]|-{F\rho^M\otimes F\rho^N}  & F(M \otimes N)  \ar[d]^{F(\rho^M\otimes \rho^N)}\\
GM\otimes GN \ar[d]_{G_2(M,N)} \ar[r]^{G\theta^M\otimes G\theta^N} &  GFM\otimes GFN \ar[d]_{G_2(FM,FN)} & FGM\otimes FGN \ar[l]^{\varphi_M \otimes \varphi_N}  \ar[r]_{F_2(GM,GN)} & F(GM \otimes GN) \ar[d]^{F(G_2(M,N))}\\
G(M\otimes N) \ar[r]_{G(\theta^M\otimes \theta^N)} & G(FM\otimes FN) \ar[r]_{G(F_2(M,N))} & GF(M\otimes N) & FG(M\otimes N)\ar[l]^{\varphi_{M \otimes N}},}
 \endaligned$$
we get $(M\otimes N,\theta^{M\otimes N}, \rho^{M\otimes N}) \in \mathcal {C}_F^G(\varphi)$.

From the assumption b), one can easily get $I\in \mathcal {C}^{(F,G)}(\varphi)$.
\end{proof}

\begin{definition}
We call $\varphi$ is a \emph{monoidal comonad distributive law} if the comonad distributive law $\varphi:FG\rightarrow GF$ satisfies condition a) and b) in Lemma 3.1.
\end{definition}

Recall from \cite{JH} and \cite{ROS}, if $\mathbb{C}$ denotes any 2-category, then the following data forms the 2-category of comonads, which is denoted by $\mathbf{Cmd(\mathbb{C})}$ :

$\bullet$ the 0-cell contains an object $Y$, a 1-cell $T:Y\rightarrow Y$ in $\mathbb{C}$, together with the comultiplication $\delta: T\rightarrow TT$, and the counit $\epsilon:T\rightarrow 1_Y$, which satisfies the coassociative law and the counit law, respectively;

$\bullet$ the 1-cell in $\mathbf{Cmd(\mathbb{C})}$ from $(Y,T,\delta,\epsilon)$ to $(Y',T',\delta',\epsilon')$ is a 1-cell $W: Y\rightarrow Y'$ in $\mathbb{C}$ together with a 2-cell $w: WT\Rightarrow T'W$ in $\mathbb{C}$, satisfying
\begin{eqnarray*}
\delta'W \ci w = T'w \ci wT \ci W\delta,~~~~\mbox{and~~~~}\epsilon'W \ci w = W\epsilon;
\end{eqnarray*}

$\bullet$ the 2-cell in $\mathbf{Cmd(\mathbb{C})}$ from $(W,w)$ to $(V,v)$ is a 2-cell $\chi: W\Rightarrow V$ in $\mathbb{C}$ which satisfies
\begin{eqnarray*}
v \ci \chi T = T'\chi \ci w.
\end{eqnarray*}

Similarly, the following data forms a 2-category $\mathbf{CC(\mathbb{C})}$ of the distributive laws:

$\bullet$ the 0-cell $(C,D,T,\varphi)$ consists of an object $C$ of $\mathbb{C}$, $(D,\Delta,\varepsilon)$ and $(T,\delta,\epsilon)$ are comonads on $C$, $\varphi:DT\Rightarrow TD$ is a comonad distributive law;

$\bullet$ the 1-cell $(J,j_d,j_t):(C,D,T,\varphi)\rightarrow(C',D',T',\varphi')$ consists of a 1-cell $J: C\rightarrow C'$ in $\mathbb{C}$, together with 2-cells $j_d: JD \Rightarrow D'J$ and $j_t:JT\Rightarrow T'J$ where $j_d$ and $j_t$ satisfies
\begin{eqnarray*}
&&\Delta' J\circ j_d = D'j_d\circ j_dD\circ J\Delta ,~~~~  J\varepsilon = \varepsilon'J \circ j_d, \\
&&\delta'J \circ j_t = T'j_t\circ j_tT\circ J\delta ,~~~~  J\epsilon = \epsilon'J \circ j_t,
\end{eqnarray*}
and the following diagram:
$$\aligned
\xymatrix{
JDT \ar[d]_{j_dT}\ar[r]^{\varphi J} & JTD \ar[r]^{j_t D} & T'J D \ar[d]^{T'j_d}\\
D'JT \ar[r]^{D' j_t} & D'T'J \ar[r]^{{\varphi}' J} & T'D'J . }
\endaligned$$

$\bullet$ the 2-cell $\beta:(J,j_d,j_g)\Rightarrow(H,h_d,h_g)$, where $\beta: J\Rightarrow H$ is a 2-cell in $\mathbb{C}$, and satisfies:
$$\aligned
\xymatrix{
JD \ar[r]^{j_d} \ar[d]_{\beta D} & D'J \ar[d]^{D' \beta} \\
HD \ar[r]^{h_d} & D'H , }~~~~~~
\xymatrix{
JT \ar[r]^{j_t} \ar[d]_{\beta T} & T'J \ar[d]^{T' \beta} \\
HT \ar[r]^{h_t} & T'H. }
\endaligned$$

Let $\mathbb{C} = \mathbf{Cat}$, then we get the following property.

\begin{proposition}
The following statements are equivalent.

(1) $\varphi:FG\rightarrow GF$ is a monoidal comonad distributive law;

(2) $F_2:(\otimes (F\times F),G_2(F\times F)\circ \otimes(\varphi\times\varphi))\Rightarrow (F\otimes, \varphi\otimes \circ FG_2): (\mathcal{C}\times \mathcal{C}, G\times G, \delta\times\delta, \epsilon\times\epsilon) \rightarrow (\mathcal{C}, G, \delta, \epsilon)$ and $F_0:(I,G_0)\Rightarrow (FI,\varphi_I\circ FG_0): (\mathfrak{I},id,id,id)\rightarrow(\mathcal{C}, G, \delta, \epsilon)$ are 2-cells in the 2-category $\mathbf{Cmd(\mathbb{C})}$, where $\mathfrak{I}$ means the terminal category;

(3) $(\otimes,F_2,G_2):(\mathcal{C}\times \mathcal{C},F\times F,G\times G,\varphi\times \varphi)\rightarrow (\mathcal{C},F,G,\varphi)$ and $(I,F_0,G_0): (\mathfrak{I},id,id,id)\rightarrow (\mathcal{C},F,G,\varphi)$ are 1-cells in $\mathbf{CC(\mathbb{C})}$.
\end{proposition}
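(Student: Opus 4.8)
The plan is to recognize that the Proposition is not asserting any new mathematics beyond conditions a) and b) of Lemma 3.1: under the standing hypotheses of this section ($F,G$ are bicomonads and $\varphi\colon FG\to GF$ is a comonad distributive law), Definition 3.2 says that (1) is equivalent to the conjunction of a) and b). So I would prove the two equivalences $(1)\Leftrightarrow(2)$ and $(1)\Leftrightarrow(3)$ by unwinding each abstract $2$-categorical condition at a generic pair $(M,N)\in\mathcal{C}\times\mathcal{C}$ (and at the terminal object for the $F_0,G_0$ parts) using the concrete whiskering formulas, and then reading off a) or b). The point I would keep in mind is that every clause hidden inside (2) and (3) other than a) and b) should turn out to be an automatic consequence of the standing hypotheses, so that no extra assumption is silently consumed.

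For $(1)\Leftrightarrow(2)$, I would first record the values of the two named comonad $1$-cells: at $(M,N)$ the source structure map is $G_2(FM,FN)\circ(\varphi_M\otimes\varphi_N)$ and the target structure map is $\varphi_{M\otimes N}\circ F(G_2(M,N))$, with unit versions $G_0$ and $\varphi_I\circ F(G_0)$. I would then verify that these pairs genuinely are $1$-cells in $\mathbf{Cmd}(\mathbf{Cat})$: the comultiplicative compatibility follows from $(C1)$ for $G$ together with $(L1)$, and the counital one from $(C2)$ for $G$ together with $(L4)$, so only the standing hypotheses are used. Granting this, the defining $2$-cell equation $v\circ\chi T=T'\chi\circ w$ for $\chi=F_2$ becomes, at $(M,N)$,
\[
\varphi_{M\otimes N}\circ F(G_2(M,N))\circ F_2(GM,GN)=G(F_2(M,N))\circ G_2(FM,FN)\circ(\varphi_M\otimes\varphi_N),
\]
which is exactly condition a); the analogous equation for $\chi=F_0$ collapses to condition b). Hence $F_2$ and $F_0$ are $2$-cells precisely when a) and b) hold, i.e. precisely when (1) holds.

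For $(1)\Leftrightarrow(3)$, I would check that the two triples are $1$-cells in $\mathbf{CC}(\mathbf{Cat})$. The comonad-morphism clauses imposed on $j_d=F_2$ and $j_t=G_2$ unwind literally to the bicomonad axioms $(C1),(C2)$ for $F$ and for $G$, while those on $F_0,G_0$ unwind to $(C3),(C4)$; all of these hold by hypothesis. The remaining hexagonal diagram relating $\varphi\times\varphi$ to $\varphi$, evaluated at $(M,N)$, is the equality of the composites $G(F_2(M,N))\circ G_2(FM,FN)\circ(\varphi_M\otimes\varphi_N)$ and $\varphi_{M\otimes N}\circ F(G_2(M,N))\circ F_2(GM,GN)$, i.e. condition a) again, and the unit hexagon gives condition b). Thus both triples are $1$-cells exactly when a) and b) hold, which closes the equivalence.

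The only real obstacle is bookkeeping rather than genuine mathematics. One must be scrupulous about the direction of each whiskering ($\varphi J$ versus $J\varphi$, $j_tD$ versus $Dj_t$, and so on) and about which of the two comonads supplies the comultiplication or counit at each stage, since an off-by-one misreading of the structure $2$-cells would produce the wrong composite and spuriously demand $(L2)$ or $(L3)$ in place of $(L1)$ and $(L4)$. Once the notation is pinned down, each step is mechanical: restrict the abstract condition to its value at $(M,N)$, push the ambient $F$-, $G$-, or $\varphi$-factor through using the relevant bicomonad or distributive-law axiom, and recognize the residue as a) or b).
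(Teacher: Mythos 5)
Your proposal is correct and is essentially the proof the paper intends: the paper's own proof is just ``Straightforward,'' and your unwinding of the 2-cell condition $v \circ \chi T = T'\chi \circ w$ at a pair $(M,N)$ (yielding condition a)) and of the hexagon and unit diagrams in $\mathbf{CC}(\mathbb{C})$ (yielding a) and b) again), together with the check that the ambient 1-cell/comonad-morphism clauses reduce to $(C1)$--$(C4)$, $(L1)$, $(L4)$ and naturality and hence hold automatically, is exactly that straightforward verification carried out in full.
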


\begin{proof}
Straightforward.
\end{proof}

\begin{lemma}
$\varphi$ is a monoidal comonad distributive law if and only if the smash coproduct $FG$ is a bicomonad on $\mathcal{C}$.
\end{lemma}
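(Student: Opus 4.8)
The plan is to realise the claim as a chain of equivalences built from Lemma 3.1, Remark 2.3 and Remark 2.4, rather than to verify the bicomonad axioms (C1)--(C4) for $FG$ by hand. First I would record that the smash coproduct $FG$ is simultaneously a comonad and a monoidal functor: it is a comonad by Definition 2.2, and since $F$ and $G$ are monoidal functors their composite $FG$ is a monoidal functor with structure maps
$$(FG)_2(M,N) = F(G_2(M,N))\circ F_2(GM,GN),\qquad (FG)_0 = F(G_0)\circ F_0 .$$
Hence $FG$ is exactly the kind of object to which Remark 2.4 applies, so that $FG$ is a bicomonad on $\mathcal{C}$ if and only if $(\mathcal{C}^{FG},\otimes,I,a,l,r)$ is a monoidal category, the coaction on a tensor product being the one prescribed there.

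Next I would transport this monoidal structure across the isomorphism of categories $\mathcal{C}^{(F,G)}(\varphi)\cong\mathcal{C}^{FG}$ of Remark 2.3, under which a $\varphi$-bicomodule $(M,\theta^M,\rho^M)$ corresponds to the $FG$-comodule with coaction $F(\rho^M)\circ\theta^M$. The key point, which I expect to carry the real content, is to check that this isomorphism is monoidal, i.e. that for $\varphi$-bicomodules $X,Y$ the $FG$-coaction induced from the pair $(\theta^{X\otimes Y},\rho^{X\otimes Y})$ of Lemma 3.1 coincides with the coaction $(FG)_2(X,Y)\circ\big((F(\rho^X)\circ\theta^X)\otimes(F(\rho^Y)\circ\theta^Y)\big)$ coming from Remark 2.4. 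Expanding $\theta^{X\otimes Y}$ and $\rho^{X\otimes Y}$ from Lemma 3.1 together with $(FG)_2$, both composites are brought, upon inserting the naturality of the monoidal constraint $F_2$ in the form
$$F_2(GX,GY)\circ(F\rho^X\otimes F\rho^Y) = F(\rho^X\otimes\rho^Y)\circ F_2(X,Y),$$
to the common expression $F(G_2(X,Y))\circ F(\rho^X\otimes\rho^Y)\circ F_2(X,Y)\circ(\theta^X\otimes\theta^Y)$. Thus the two coactions agree automatically, with no hypothesis on $\varphi$, so the isomorphism of Remark 2.3 is monoidal as soon as either side is monoidal.

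Finally I would assemble the chain: by the previous paragraph $\mathcal{C}^{FG}$ is monoidal if and only if $\mathcal{C}^{(F,G)}(\varphi)$, equipped with the tensor structure of Lemma 3.1, is monoidal; by Lemma 3.1 this holds if and only if conditions a) and b) are satisfied; and by Definition 3.2 the latter is precisely the statement that $\varphi$ is a monoidal comonad distributive law. Combining this with the equivalence ``$FG$ is a bicomonad if and only if $\mathcal{C}^{FG}$ is monoidal'' from the first paragraph yields the lemma. The only genuine obstacle is the naturality matching of the middle paragraph; once the two tensor products are seen to be the same functor on the common underlying category, everything else is a formal concatenation of the earlier results, so I would present the argument at that level rather than expanding (C1)--(C4).
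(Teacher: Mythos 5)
Your proposal is correct, but it takes a genuinely different route from the paper. The paper proves this lemma by brute force at the level of the bicomonad axioms: assuming $FG$ is a bicomonad, it writes out axiom (C1) for $FG$, composes with the counits $\varepsilon GF\epsilon$ to extract condition a) (and similarly b)); conversely, it verifies (C1) by an explicit chain of naturality manipulations using a), and (C3) by a commutative diagram, with (C2), (C4) left as similar. You instead never touch (C1)--(C4): you transport the problem across the isomorphism $\mathcal{C}^{(F,G)}(\varphi)\cong\mathcal{C}^{FG}$ of Remark 2.3, check via naturality of $F_2$ that the two prescribed tensor coactions literally coincide as morphisms $X\otimes Y\to FG(X\otimes Y)$ (and that the units match, since $(FG)_0=F(G_0)\circ F_0$), and then concatenate Remark 2.4 with Lemma 3.1. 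Your naturality computation is correct, and the chain is not circular since Lemma 3.1 and the Remarks are established independently of this lemma; in effect you derive the equivalence $(2)\Leftrightarrow(3)$ of Theorem 3.5 by composing $(2)\Leftrightarrow(1)$ and $(1)\Leftrightarrow(3)$, whereas the paper proves $(2)\Leftrightarrow(3)$ directly and only afterwards assembles the theorem. What your approach buys is conceptual economy and a clear explanation of \emph{why} the lemma holds (all three conditions express monoidality of the same category); what it costs is reliance on Remark 2.4, which the paper only cites from the literature, plus one small point you gloss over: to go from ``the prescribed $FG$-coaction on $X\otimes Y$ is a comodule structure'' back to ``the prescribed pair $(\theta^{X\otimes Y},\rho^{X\otimes Y})$ is a $\varphi$-bicomodule,'' you should note that the pair is recovered from the composite coaction by composing with $F\epsilon$ and $\varepsilon_G$ (using that each of $\theta^{X\otimes Y}$, $\rho^{X\otimes Y}$ is individually a comodule structure, which holds unconditionally because $F$ and $G$ are bicomonads). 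This is a two-line counit check, so it is a presentational gap rather than a mathematical one. The paper's computational proof, by contrast, is self-contained and produces the explicit identities one may want later, but it hides the structural reason behind the equivalence.
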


\begin{proof}
Recall from Remark 2.1 that $FG$ is a comonad on $\mathcal{C}$. Since $F$ and $G$ are both monoidal functors, $FG$ is also a monoidal functor: for any
$M, N\in \mathcal {C}$,
$$(FG)_2(M,N) = F(G_2(M,N))\circ F_2(GM,GN);~~~~(FG)_0 = F(G_0)\circ F_0.$$

$\Rightarrow$: If $FG$ is a bicomonad, thus for $M, N\in \mathcal {C}$, we have
\begin{eqnarray*}
&&FGF(G_2(M,N))\circ FG(F_2(GM,GN))\circ F(G_2(FGM,FGN))\circ F_2(GFGM,GFGN) \\
&&~~~~\circ (F\varphi_{GM}\otimes F\varphi_{GN}) \circ (\Delta\delta_M\otimes \Delta\delta_N) \\
&&= F\varphi_{G(M\otimes N)}\circ \Delta\delta_{M\otimes N}\circ F(G_2(M,N))\circ F_2(GM,GN).
\end{eqnarray*}
Multiplied by $\varepsilon GF\epsilon_{(M\otimes N)}$ right on both sides of the above identity, we immediately get the conclusion a). Similarly, one can see that b) holds.

$\Leftarrow$: We only check Equations (C1) and (C3). Firstly, for any $M, N\in \mathcal {C}$, we have
\begin{eqnarray*}
&&~~ FGF(G_2(M,N)) \ci FG(F_2(GM,GN)) \ci F(G_2(FGM,FGN)) \ci F_2(GFGM,GFGN) \\
&&~~~~~~\ci (F\varphi_{GM}\otimes F\varphi_{GN}) \ci (FF\delta_M\otimes FF\delta_N) \ci (\Delta_{GM}\otimes \Delta_{GN}) \\
&&=FGF(G_2(M,N)) \ci F\varphi_{GM\otimes GN} \ci FF(G_2(GM,GN)) \ci F(F_2(GGM,GGN)) \\
&&~~~~\ci F_2(FGGM,FGGN) \ci (FF\delta_M\otimes FF\delta_N) \ci (\Delta_{GM}\otimes \Delta_{GN}) \\
&&=FGF(G_2(M,N)) \ci F\varphi_{GM\otimes GN} \ci FF(G_2(GM,GN)) \ci FF(\delta_M\otimes \delta_N) \ci \Delta_{GM\otimes GN} \ci F_2(GM,GN)\\
&&=F\varphi_{G(M\otimes N)} \ci (FF\delta_{M\otimes N}) \ci \Delta_{G(M\otimes N)} \ci F(G_2(M,N)) \ci F_2(GM,GN),
\end{eqnarray*}
thus (C1) holds.

Secondly, consider the following commutative diagram
$$\aligned
\xymatrix{
I \ar[r]^{F_0} \ar[d]_{F_0} & FI \ar[d]_{F(F_0)}\ar[r]^{F(G_0)} & FGI \ar[r]^{FG(F_0)} & FGFI \ar[dd]|{FGF(G_0)}\\
FI \ar[r]_{\Delta_I}\ar[d]_{F(G_0)} & FFI\ar[d]|{FF(G_0)}\ar[r]^{FF(G_0)} & FFGI \ar[d]|{FFG(G_0)}\ar"1,4"|{F\varphi_I} & \\
FGI \ar[r]_{\Delta_{GI}} & FFGI \ar[r]_{FF\delta_I} & FFGGI \ar[r]_{F\varphi_{GI}} & FGFGI ,  }
\endaligned$$
(C3) holds. (C2) and (C4) can be proved similarly.
\end{proof}

Combining Lemma 3.1, Proposition 3.2 and Lemma 3.3, we immediately get the following theorem.

\begin{theorem} Assume that
$(\mathcal {C}, \otimes, I, a, l, r)$ is a monoidal category, $(F, \Delta, \varepsilon)$ and $(G, \delta, \epsilon)$ are bicomonads, and $\varphi:FG\rightarrow GF$ is a comonad distributive law on $\mathcal {C}$. Then the following statements are equivalent:

(1) $(\mathcal {C}^{(F,G)}(\varphi),\otimes,I,a,l,r)$ is a monoidal category, where the monoidal structure is given in Lemma 3.1;

(2) $\varphi:FG\rightarrow GF$ is a monoidal comonad distributive law;

(3) the smash coproduct $FG$ is a bicomonad;

(4) $F_2:(\otimes (F\times F),G_2(F\times F)\circ \otimes(\varphi\times\varphi))\Rightarrow (F\otimes, \varphi\otimes \circ FG_2)$ and $F_0:(I,G_0)\Rightarrow (FI,\varphi_I\circ FG_0)$ are 2-cells in the 2-category $\mathbf{Cmd(\mathbb{C})}$, where $\mathfrak{I}$ means the terminal category;

(5) $(\otimes,F_2,G_2):(\mathcal{C}\times \mathcal{C},F\times F,G\times G,\varphi\times \varphi)\rightarrow (\mathcal{C},F,G,\varphi)$ and $(I,F_0,G_0): (\mathfrak{I},id,id,id)\rightarrow (\mathcal{C},F,G,\varphi)$ are 1-cells in $\mathbf{CC(\mathbb{C})}$.
\end{theorem}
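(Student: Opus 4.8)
The plan is to read this theorem as a consolidation of the three results established above, using statement (2) — that $\varphi$ is a monoidal comonad distributive law — as the common pivot to which each of the other four statements is individually equivalent. Since the genuine diagram-chasing content has already been discharged in Lemma 3.1 and Lemma 3.3, the proof here is organizational: I would verify that each of (1), (3), (4), (5) is tied to (2) by one of the earlier statements, whence all five are mutually equivalent.

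First I would record (1) $\Leftrightarrow$ (2). Lemma 3.1 asserts that $(\mathcal{C}^{(F,G)}(\varphi),\otimes,I,a,l,r)$ is monoidal if and only if $(F,G,\varphi)$ satisfies conditions a) and b); by the definition of a monoidal comonad distributive law this is precisely statement (2). Thus (1) and (2) are the same condition phrased differently, and no further argument is needed.

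Next I would invoke Lemma 3.3 for (2) $\Leftrightarrow$ (3), which states verbatim that $\varphi$ is a monoidal comonad distributive law exactly when the smash coproduct $FG$ is a bicomonad. For the two-categorical reformulations I would appeal to Proposition 3.2, whose three equivalent items are literally statements (2), (4), and (5) of the theorem; this yields $(2)\Leftrightarrow(4)\Leftrightarrow(5)$. Assembling these, every one of the five statements is equivalent to (2), and hence they are mutually equivalent.

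I do not anticipate a genuine mathematical obstacle, since by construction the theorem merely repackages Lemma 3.1, Proposition 3.2, and Lemma 3.3. The only point demanding care is bookkeeping: I would confirm that the conditions a) and b) referenced in Lemma 3.1 are the same conditions fixed in the definition of a monoidal comonad distributive law, so that the phrase appearing in Lemma 3.3 and in Proposition 3.2 is unambiguously the shared hub (2), and that the labelling of the three component items of Proposition 3.2 aligns exactly with items (2), (4), and (5) here.
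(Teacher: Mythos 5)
Your proposal is correct and follows exactly the paper's own route: the paper proves this theorem by the single remark ``Combining Lemma 3.1, Proposition 3.2 and Lemma 3.3, we immediately get the following theorem,'' which is precisely your organization with (2) as the pivot --- Lemma 3.1 gives (1)$\Leftrightarrow$(2), Lemma 3.3 gives (2)$\Leftrightarrow$(3), and Proposition 3.2 gives (2)$\Leftrightarrow$(4)$\Leftrightarrow$(5). Your added bookkeeping check that conditions a) and b) of Lemma 3.1 coincide with the defining conditions of a monoidal comonad distributive law is exactly the content of Definition 3.2, so nothing is missing.
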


\section{\textbf{The smash coproduct of Hom-bialgebras}}
\def\theequation{4.\arabic{equation}}
\setcounter{equation} {0} \hskip\parindent

At the beginning of this section, we introduce the following monoidal category $\overline{\mathcal{H}}^{i,j}(Vec_\Bbbk)$ for any $i,j \in \mathbb{Z}$:

$\bullet $ the objects of $\overline{\mathcal{H}}^{i,j}(Vec_\Bbbk)$ are pairs $(X,\a_X)$, where $X\in Vec_\Bbbk$ and $\a_X\in Aut_{\Bbbk}(X)$;

$\bullet $ the morphism $f:(X,\a_X)\rightarrow (Y,\a_Y)$ in $\overline{\mathcal{H}}^{i,j}(Vec_\Bbbk)$ is a $\Bbbk$-linear map from $X$ to $Y$ such that $\a_Y\circ f=f\circ \a_X$;

$\bullet $ the monoidal structure is given by
$$(X,\a_X)\otimes(Y,\a_Y)=(X\otimes Y,\a_X\otimes\a_Y),$$
and the unit is $(\Bbbk,id_{\Bbbk})$;

$\bullet $ the associativity constraint $a$ is given by
$$
a_{X,Y,Z}:(X \o Y) \o Z \rightarrow X \o(Y \o Z),~~(x \o y) \o z\mapsto \a^{i+1}_X(x) \o (y \o \a^{-j-1}_Z(z));
$$

$\bullet $ for any $x \in X \in Vec_\Bbbk$ and $\lambda \in \Bbbk$, the unit constraints $l$ and $r$ are given by
$$
l_X(\lambda \o x) = \lambda\a^{j+1}_X(x),~~~~l_X^{-1}(x) = 1_\Bbbk \o \a^{-j-1}_X(x),
$$
$$
r_X(x \o\lambda) = \lambda\a^{i+1}_X(x),~~~~r_X^{-1}(x) =\a^{-i-1}_X(x) \o 1_\Bbbk.
$$


Now, we assume that $(H, \a_H)$ is an object in $\overline{\mathcal{H}}^{i,j}(Vec_\Bbbk)$, $m_H: H\o H \rightarrow H$ (with notation $m_H(a \o b) = ab$),
$\eta_H: \k\rightarrow H$ (with notation $\eta_H(1_\k) = 1_H$), and $\Delta_H: H \rightarrow H\o H$ (with notation $\Delta_H(h) = h_1 \o h_2$),
and $\varepsilon_H: H\rightarrow \k$ are all morphisms in $\overline{\mathcal{H}}^{i,j}(Vec_\Bbbk)$. Further, we write
$$
\ddot{H}=\_\o H:\overline{\mathcal{H}}^{i,j}(Vec_\Bbbk)\rightarrow \overline{\mathcal{H}}^{i,j}(Vec_\Bbbk),~~~~(X, \a_X)\mapsto ( X \o H, \a_X \o \a_H)
$$
for the right tensor functor of $H$.

\begin{lemma}\label{cmd}
If we define the following structures on $\ddot{H}$:

$\bullet$ $\delta:\ddot{H}\rightarrow \ddot{H}\ddot{H}$ is defined by
$$
\delta_X: X \o H\rightarrow (X \o H) \o H,~~~x \o h \mapsto (\a_X(x) \o h_1) \o \a_H^{-1}(h_2);
$$

$\bullet$ $\epsilon:\ddot{H}\rightarrow id_{\overline{\mathcal{H}}^{i,j}(Vec_\Bbbk)}$ is given by
$$
\epsilon_X: X \o H\rightarrow  X,~~~ x \o h \mapsto \varepsilon_H(h) \a^{-1}_X(x),
$$
then $\ddot{H}=(\ddot{H},\delta, \epsilon)$ forms a comonad on
$\overline{\mathcal{H}}^{i,j}(Vec_\Bbbk)$ if and only if $(H,\a_H,\Delta_H,\varepsilon_H)$ is a Hom-coalgebra over $\k$. Further, $Corep(H)$ is exactly $\overline{\mathcal{H}}^{i,j}(Vec_\Bbbk)^{\ddot{H}}$.
\end{lemma}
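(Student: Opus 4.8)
The plan is to verify directly the three defining axioms of a comonad for $(\ddot{H},\delta,\epsilon)$, namely coassociativity $\ddot{H}\delta\circ\delta=\delta\ddot{H}\circ\delta$ together with the two counit laws $\ddot{H}\epsilon\circ\delta=id=\epsilon\ddot{H}\circ\delta$, and to read each of these equations as an invertible $\alpha_H$-twist of one of the Hom-coalgebra axioms for $(H,\alpha_H,\Delta_H,\varepsilon_H)$. The crucial structural observation is that the composite functor $\ddot{H}\ddot{H}$ is literally $(\_\otimes H)\otimes H$: functor composition is strict, so no associativity or unit constraint of $\overline{\mathcal{H}}^{i,j}(Vec_\Bbbk)$ is inserted between the two copies of $H$. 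Consequently the constraints $a,l,r$ (and hence the indices $i,j$) never enter the comonad axioms, and the whole verification reduces to pushing powers of $\alpha_X$ and $\alpha_H$ through the Sweedler components. I would first record that $\delta_X$ and $\epsilon_X$ are genuine morphisms of $\overline{\mathcal{H}}^{i,j}(Vec_\Bbbk)$ (i.e. they commute with $\alpha_X\otimes\alpha_H$, resp. $\alpha_X$) and are natural in $X$; the equivariance of $\delta_X$ uses the comultiplicativity $\Delta_H\circ\alpha_H=(\alpha_H\otimes\alpha_H)\circ\Delta_H$, which is available as the redundant axiom discussed after the definition of Hom-coalgebra.

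For coassociativity I would evaluate both composites on $x\otimes h$. Writing $\delta_X(x\otimes h)=(\alpha_X(x)\otimes h_1)\otimes\alpha_H^{-1}(h_2)$, the left-hand composite $(\delta_X\otimes id_H)\circ\delta_X$ yields the three $H$-slots $(h_1)_1,\ \alpha_H^{-1}((h_1)_2),\ \alpha_H^{-1}(h_2)$, while the right-hand composite $\delta_{X\otimes H}\circ\delta_X$ yields $\alpha_H(h_1),\ \alpha_H^{-1}((h_2)_1),\ \alpha_H^{-2}((h_2)_2)$ (the last two obtained using $\Delta_H\alpha_H^{-1}=(\alpha_H^{-1}\otimes\alpha_H^{-1})\Delta_H$); in both cases the $X$-slot is $\alpha_X^2(x)$. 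Applying the invertible operator $id\otimes\alpha_H^{-1}\otimes\alpha_H^{-2}$ to the Hom-coassociativity identity $\alpha_H(h_1)\otimes(h_2)_1\otimes(h_2)_2=(h_1)_1\otimes(h_1)_2\otimes\alpha_H(h_2)$ turns its two sides into exactly the two slot patterns computed above, so coassociativity holds precisely when Hom-coassociativity does. The counit laws are shorter: $\ddot{H}\epsilon_X\circ\delta_X(x\otimes h)=x\otimes\alpha_H^{-1}(\varepsilon_H(h_1)h_2)$ and $\epsilon_{X\otimes H}\circ\delta_X(x\otimes h)=x\otimes\alpha_H^{-1}(\varepsilon_H(h_2)h_1)$ (using $\varepsilon_H\circ\alpha_H=\varepsilon_H$), and each equals $x\otimes h$ exactly when $\varepsilon_H(h_1)h_2=\alpha_H(h)=h_1\varepsilon_H(h_2)$. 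For the forward implication these computations use the Hom-coalgebra axioms; for the converse I would specialise to $X=(\Bbbk,id)$, identify $\Bbbk\otimes H\cong H$ linearly, and use the invertibility of $\alpha_H$ to strip the twists and recover the three Hom-coalgebra axioms.

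For the final identification $Corep(H)=\overline{\mathcal{H}}^{i,j}(Vec_\Bbbk)^{\ddot{H}}$ I would unwind the definition of an $\ddot{H}$-comodule $(X,\rho^X)$ with $\rho^X(m)=m_0\otimes m_1$. The comodule-coassociativity $\ddot{H}\rho^X\circ\rho^X=\delta_X\circ\rho^X$ reads $(m_0)_0\otimes(m_0)_1\otimes m_1=\alpha_M(m_0)\otimes(m_1)_1\otimes\alpha_H^{-1}((m_1)_2)$, which after applying $id\otimes id\otimes\alpha_H$ is exactly the Hom-comodule axiom $\alpha_M(m_0)\otimes\Delta_H(m_1)=\rho^X(m_0)\otimes\alpha_H(m_1)$; the counit condition $\epsilon_X\circ\rho^X=id_M$ gives $\varepsilon_H(m_1)m_0=\alpha_M(m)$, the second Hom-comodule axiom; and the requirement that $\rho^X$ be a morphism of $\overline{\mathcal{H}}^{i,j}(Vec_\Bbbk)$ is precisely the compatibility $\rho^X\circ\alpha_M=(\alpha_M\otimes\alpha_H)\circ\rho^X$. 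Finally $F$-colinear maps coincide with morphisms of Hom-comodules, so the two categories agree on objects and on morphisms. The main obstacle is the $\alpha$-power bookkeeping in the coassociativity step, that is, choosing the correct invertible twist that converts the twisted comultiplication $\delta$ into the Hom-coassociativity axiom, while the reason it all goes through cleanly is the strictness of functor composition noted at the outset.
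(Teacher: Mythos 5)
Your proposal is correct and follows essentially the same route as the paper: evaluate the comonad axioms on $x\otimes h$, specialize to $X=(\Bbbk,id_\Bbbk)$, and use invertible $\alpha_H$-twists to translate back and forth between those axioms and the Hom-coassociativity/Hom-counit identities. The only difference is that your argument is self-contained: the paper carries out this computation only for the direction (comonad $\Rightarrow$ Hom-coalgebra) and delegates the converse together with the identification $Corep(H)=\overline{\mathcal{H}}^{i,j}(Vec_\Bbbk)^{\ddot{H}}$ to \cite{zw}, Theorem 4.3, whereas you supply the (routine) verifications directly.
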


\begin{proof}
$\Rightarrow$: For any $h \in H$, since the following diagram is commutative:
$$\aligned
\xymatrix{
x\o h \ar[rr]^-{\delta_X} \ar[d]_{\delta_X} && (\a_X(x) \o h_1) \o \a^{-1}_H(h_2) \ar[d]^{\delta_{X \o H}} \\
(\a_X(x) \o h_1) \o \a^{-1}_H(h_2) \ar[rr]^-{\delta_X \o id_H} && ((\a^2_X(x) \o \a_H(h_1)) \o \a_H^{-1}(h_{21})) \o \a_H^{-2}(h_{22}), }
\endaligned$$
take $x=1_\k$ and use $((\a_H^{-j-1} \o \a_H) \o \a^{-2}_H) \ci ((l_H\o id_H) \o id_H)$ to action at the both side of the identity, then we immediately get that $\a_H(h_1) \o h_{21} \o h_{22} = h_{11} \o h_{12} \o \a_H(h_2)$.

Similarly, one can show that $\v_H(h_1)h_2 = h_1 \v_H(h_2) = \a_H(h)$ through the counit law of $\ddot{H}$. Thus $H$ is a Hom-coalgebra.

$\Leftarrow$: See [\cite{zw}, Theorem 4.3].
\end{proof}

\begin{lemma}\label{mdl}
If we define the following structures on $\ddot{H}$:

$\bullet$ $\ddot{H}_2:\ddot{H} \o \ddot{H} \rightarrow \ddot{H} \o$ is given by
$$
\ddot{H}_2(X,Y):( X \o H) \o (Y \o H)\rightarrow (X \o Y) \o H,~~~(x \o a)\o (y \o b)\mapsto (x \o y) \o \a_H^i(a)\a_H^j(b),
$$
for any $ X, Y \in \overline{\mathcal{H}}^{i,j}(Vec_\Bbbk)$;

$\bullet$ $\ddot{H}_0: \Bbbk\rightarrow \ddot{H}(\Bbbk)$ is given by
$$
\ddot{H}_0:\Bbbk\rightarrow \Bbbk\o H ,~~~ \lambda\mapsto \lambda \o 1_H,
$$
then $\ddot{H}=(\ddot{H}_2, \ddot{H}_0)$ is a monoidal functor if and only if $(H,\a_H,m_H,\eta_H)$ is a Hom-algebra over $\k$.
\end{lemma}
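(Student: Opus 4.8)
The plan is to verify directly the three coherence axioms for a monoidal functor recalled in Section 2.1 — the associativity coherence (the one involving $a$) and the two unit coherences — against the explicit formulas for $\ddot H_2$ and $\ddot H_0$, and to show that each axiom is equivalent to one of the defining identities of a Hom-algebra. Throughout I would exploit two facts that hold under the standing hypotheses of this section: since $m_H$ and $\eta_H$ are morphisms in $\overline{\mathcal H}^{i,j}(Vec_\Bbbk)$, the map $\a_H$ is multiplicative, $\a_H(ab)=\a_H(a)\a_H(b)$, and fixes the unit, $\a_H(1_H)=1_H$; and that $\a_H$ is invertible, which is what will let me turn the coherence identities into identities holding for all elements.

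First I would evaluate the associativity coherence on a general element $((x\o a)\o(y\o b))\o(z\o c)$, tracing the two composites: one through $a_{\ddot H X,\ddot H Y,\ddot H Z}$, then $(id\o \ddot H_2(Y,Z))$, then $\ddot H_2(X,Y\o Z)$; the other through $(\ddot H_2(X,Y)\o id)$, then $\ddot H_2(X\o Y,Z)$, then $\ddot H(a_{X,Y,Z})$. The key simplification is that the outer tensor factors coincide automatically — both composites produce $\a_X^{i+1}(x)\o(y\o\a_Z^{-j-1}(z))$ — so the whole identity collapses to the equality of the $H$-components. Pulling the outer $\a_H$-powers through products by multiplicativity, this $H$-component equation reads $\a_H^{2i+1}(a)(\a_H^{i+j}(b)\a_H^{j-1}(c))=(\a_H^{2i}(a)\a_H^{i+j}(b))\a_H^j(c)$. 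Setting $a'=\a_H^{2i}(a)$, $b'=\a_H^{i+j}(b)$, $c'=\a_H^{j-1}(c)$ — a bijective change of variables since $\a_H$ is invertible — this becomes precisely $\a_H(a')(b'c')=(a'b')\a_H(c')$, the Hom-associativity axiom.

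Next I would treat the unit coherences in the same style. Evaluating $\ddot H(r_X)\ci\ddot H_2(X,I)\ci(id\o\ddot H_0)\ci r^{-1}_{\ddot H X}$ on $x\o h$ reduces, once the $\a_X$-powers cancel on the $X$-factor, to the condition $\a_H^{-1}(h)\a_H^j(1_H)=h$; substituting $a=\a_H^{-1}(h)$ and using $\a_H(1_H)=1_H$ turns this into $a\,1_H=\a_H(a)$. The left unit coherence similarly yields $\a_H^i(1_H)a=\a_H(a)$, i.e. $1_H a=\a_H(a)$. Together with $\a_H(1_H)=1_H$ these are exactly the Hom-algebra unit axioms. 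Since every step — the change of variables in the associativity part and the substitutions in the unit parts — is a bijection, reading the computation in both directions gives the claimed equivalence.

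The only genuinely delicate part is the bookkeeping of the exponents of $\a_H$: each application of the associativity constraint contributes shifts by $i+1$ and $-j-1$, while each application of $\ddot H_2$ inserts $\a_H^i$ and $\a_H^j$ on the two $H$-slots, and one must check that these all combine so that, after using multiplicativity and the bijective substitution, the surviving identity is the honest Hom-algebra relation with no residual twist. I expect this index tracking, rather than any conceptual difficulty, to be where care is needed; the coherences for the unit object $I=(\Bbbk,id)$ close up cleanly precisely because $\a_H(1_H)=1_H$ is supplied for free by $\eta_H$ being a morphism in $\overline{\mathcal H}^{i,j}(Vec_\Bbbk)$.
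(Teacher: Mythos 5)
Your proposal is correct and follows essentially the same approach as the paper: the paper's forward direction evaluates the same coherence identities at $X=Y=Z=\Bbbk$ on the pre-twisted elements $1_\Bbbk\o\a_H^{-2i}(a)$, $1_\Bbbk\o\a_H^{-i-j}(b)$, $1_\Bbbk\o\a_H^{-j+1}(c)$, which is exactly your bijective change of variables performed up front, and your exponent bookkeeping ($\a_H^{2i+1}(a)(\a_H^{i+j}(b)\a_H^{j-1}(c))=(\a_H^{2i}(a)\a_H^{i+j}(b))\a_H^j(c)$, etc.) matches it precisely. The only difference is that the paper handles the converse by citing [zw, Theorem 4.3], whereas your single reversible computation covers both directions at once.
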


\begin{proof}
$\Rightarrow$: For any $a,b,c \in H$, since $\ddot{H}$ is a monoidal functor, consider the following identity
\begin{eqnarray*}
&&(\ddot{H}_2(\k,\k\otimes \k) \circ (id_{\k \o H} \otimes \ddot{H}_2(Y, Z))\circ a_{ \k \o H, \k \o H,\k \o H})( ( (1_\k \o \a_H^{-2i}(a)) \o (1_\k \o \a_H^{-i-j}(b) ) )\\
&&~~~~\o ( 1_k \o \a_H^{-j+1}(c) ) )\\
&& = ( (a_{\k,\k,\k} \o H)\circ \ddot{H}_2(\k\otimes \k, \k)\circ(\ddot{H}_2(\k,\k)\otimes id_{\k \o H} )  ( ( (1_\k \o \a_H^{-2i}(a)) \o (1_\k \o \a_H^{-i-j}(b) ) ) \\
&&~~~~\o ( 1_k \o \a_H^{-j+1}(c) ) ),
\end{eqnarray*}
thus we have $\a_H(a)(bc) = (ab) \a_H(c)$.

We can get $a1_H = 1_H a = \a_H (a)$ in a similar way.

$\Leftarrow$: See [\cite{zw}, Theorem 4.3].
\end{proof}

\begin{theorem}
If we define $\delta$, $\epsilon$, and $\ddot{H}_2$, $\ddot{H}_0$ as in Lemma \ref{cmd} and Lemma \ref{mdl}, then
the following statements are equivalent:

(1) $\ddot{H}$ is a bicomonad on $\overline{\mathcal{H}}^{i,j}(Vec_\Bbbk)$;

(2) $(H,\a_H,m_H,\eta_H, \Delta_H,\varepsilon_H)$ is a Hom-bialgebra over $\k$;

(3) $Corep(H)$ is a monoidal category. Precisely, the monoidal structure in $Corep(H)$
is given by
\begin{eqnarray*}
\theta^{U \o V}(u \o v):= u_{(0)} \o v_{(0)} \o \a_H^i(u_{(1)})\a_H^j(v_{(1)}),
\end{eqnarray*}
where $U,V$ are all $H$-Hom-comodules, $u \in U$, $v \in V$. The associativity constraint and the unit constraint in $Corep(H)$ are same to $\overline{\mathcal{H}}^{i,j}(Vec_\Bbbk)$. We denote this monoidal category by $Corep^{i,j}(H)$. Further, $\overline{\mathcal{H}}^{i,j}(Vec_\Bbbk)^{\ddot{H}}$ is also a monoidal category, and $Corep^{i,j}(H)$ is identified to $\overline{\mathcal{H}}^{i,j}(Vec_\Bbbk)^{\ddot{H}}$ as a monoidal category.
\end{theorem}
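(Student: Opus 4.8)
The plan is to combine Lemma~\ref{cmd}, Lemma~\ref{mdl} and Remark 2.4, so that the whole theorem reduces to checking that the bicomonad coherence axioms (C1)--(C4) are precisely the Hom-bialgebra compatibility conditions.

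First I would prove (1) $\Leftrightarrow$ (2). By definition $\ddot{H}$ is a bicomonad exactly when it is at once a comonad (via $\d,\epsilon$), a monoidal functor (via $\ddot{H}_2,\ddot{H}_0$), and satisfies (C1)--(C4). Lemma~\ref{cmd} turns the comonad clause into the statement that $(H,\a_H,\D_H,\v_H)$ is a Hom-coalgebra, and Lemma~\ref{mdl} turns the monoidal-functor clause into the statement that $(H,\a_H,m_H,\e_H)$ is a Hom-algebra. Granting both of these, it remains only to show that (C1)--(C4) are equivalent to $\D_H$ and $\v_H$ being morphisms of Hom-algebras that preserve the unit, which is exactly the defining compatibility of a Hom-bialgebra.

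Then I would evaluate (C1)--(C4) for $G=\ddot{H}$ on the unit object, i.e.\ with $X=Y=\k$, feeding in elements $1_\k\o a$ and $1_\k\o b$. Tracking the automorphism powers $\a_H^i,\a_H^j$ produced by $\ddot{H}_2$ together with the index shifts in $\d_X(x\o h)=(\a_X(x)\o h_1)\o\a_H^{-1}(h_2)$, a direct computation should collapse (C1) to the multiplicativity $\D_H(ab)=\D_H(a)\D_H(b)$ and (C2) to $\v_H(ab)=\v_H(a)\v_H(b)$; evaluating (C3) and (C4) on $\ddot{H}_0(\lambda)=\lambda\o 1_H$ should give $\D_H(1_H)=1_H\o 1_H$ and $\v_H(1_H)=1_\k$. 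Since these four identities say exactly that $\D_H,\v_H$ preserve multiplication and unit, (1) $\Leftrightarrow$ (2) follows.

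For (1) $\Leftrightarrow$ (3) I would appeal to Remark 2.4: with the comonad and monoidal-functor structures already fixed, $\overline{\mathcal{H}}^{i,j}(Vec_\Bbbk)^{\ddot{H}}$ is a monoidal category if and only if $\ddot{H}$ is a bicomonad. Because Lemma~\ref{cmd} identifies $Corep(H)$ with $\overline{\mathcal{H}}^{i,j}(Vec_\Bbbk)^{\ddot{H}}$, this is the same as the assertion in (3). The stated coaction is read off by substituting the formula for $\ddot{H}_2$ into the general tensor coaction $\rho^{U\o V}=\ddot{H}_2(U,V)\ci(\rho^U\o\rho^V)$ of Remark 2.4, giving $\th^{U\o V}(u\o v)=u_{(0)}\o v_{(0)}\o\a_H^i(u_{(1)})\a_H^j(v_{(1)})$, while the associativity and unit constraints are inherited from $\overline{\mathcal{H}}^{i,j}(Vec_\Bbbk)$; thus the identification of $Corep^{i,j}(H)$ with $\overline{\mathcal{H}}^{i,j}(Vec_\Bbbk)^{\ddot{H}}$ holds as monoidal categories. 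The main obstacle will be the bookkeeping inside (C1): one must compose $\ddot{H}_2$ with $\d$ in the two prescribed orders and verify, after cancelling the powers $\a_H^{\pm i},\a_H^{\pm j}$ and the counit factors, that the residual equation is genuinely $\D_H(ab)=\D_H(a)\D_H(b)$ and not some $\a$-twisted variant. Once this single coherence diagram is settled, (C2)--(C4) follow by the same technique with markedly lighter computation.
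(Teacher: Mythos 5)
Your proposal is correct, but it is organized differently from the paper's proof. The paper closes the three statements in a cycle (1)$\Rightarrow$(2)$\Rightarrow$(3)$\Rightarrow$(1): the only computation it actually performs is (1)$\Rightarrow$(2), by evaluating (C1)--(C4) at $X=Y=\k$ on $\a_H$-shifted elements such as $1_\k \o \a_H^{-i}(a)$ and $1_\k \o \a_H^{-j}(b)$, exactly the technique you describe; it then outsources (2)$\Rightarrow$(3) to an external result ([\cite{zw}, Theorem 6.2]) and obtains (3)$\Rightarrow$(1) from Remark 2.4. You instead prove the two biconditionals (1)$\Leftrightarrow$(2) and (1)$\Leftrightarrow$(3), which makes the argument self-contained: the external citation is replaced by a direct verification that the Hom-bialgebra axioms imply (C1)--(C4), and (1)$\Leftrightarrow$(3) follows from Remark 2.4 together with the identification $Corep(H)=\overline{\mathcal{H}}^{i,j}(Vec_\Bbbk)^{\ddot{H}}$ from Lemma \ref{cmd}. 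One point you must make explicit when writing this up: evaluation at $X=Y=\k$ by itself only yields (C1)--(C4) $\Rightarrow$ Hom-bialgebra axioms, i.e.\ the direction (1)$\Rightarrow$(2); for the converse (2)$\Rightarrow$(1) you have to check (C1)--(C4) on arbitrary objects $(X,\a_X)$, $(Y,\a_Y)$. This does work, because $\delta$ and $\ddot{H}_2$ touch the $X$ and $Y$ tensor factors only through $\a_X,\a_Y$ and identities, so both sides of each axiom carry the same $X\o Y$ component and the axiom collapses to an equation purely in $H$; using that $m_H$ and $\Delta_H$ commute with the bijective map $\a_H$, that residual equation is precisely the corresponding bialgebra identity (e.g.\ $\Delta_H(ab)=\Delta_H(a)\Delta_H(b)$ for (C1), with no leftover $\a$-twist). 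But this general-object computation is exactly the step the paper sidesteps by citing [\cite{zw}], so in your write-up it needs to be carried out, not merely inferred from the $\k$-instance. With that step supplied, your decomposition is a valid, and arguably more transparent, alternative to the paper's implication cycle.
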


\begin{proof}
(1)$\Rightarrow$(2):
We already know that $\ddot{H}$ is both a Hom-algebra and a Hom-coalgebra because Lemma \ref{cmd} and Lemma \ref{mdl}.

For any $a,b \in H$, since equation (C1) holds, we have
\begin{eqnarray*}
&& ( (\ddot{H}_2(\k,\k) \o id_H) \ci \ddot{H}_2(\k \o H, \k\o H) \ci (\delta_X \o \delta_Y)) ( ( 1_\k \o \a_H^{-i}(a) ) \o ( 1_\k \o \a_H^{-j}(b) ) ) \\
&& = (\delta_{\k \o \k} \ci \ddot{H}_2(\k,\k)) ( ( 1_\k \o \a_H^{-i}(a) ) \o ( 1_\k \o \a_H^{-j}(b) ) ),
\end{eqnarray*}
which implies $(ab)_1 \o (ab)_2 = a_1b_1 \o a_2 b_2$. Similarly, (C2) implies $\v_H$ preserve the multiplication, (C3) and (C4) mean $\Delta_H$ and $\v_H$ preserve the unit, respectively. Thus $H$ is a Hom-bialgebra.

(2)$\Rightarrow$(3): See [\cite{zw}, Theorem 6.2].

(3)$\Rightarrow$(1): See Remark 2.4.
\end{proof}

Now we suppose that $B=(B,\a_B)$ and $H=(H,\a_H)$ are two Hom-coalgebras over $\k$, $\ddot{B} = \_ \o B$, $\ddot{H} = \_ \o H$ mean the tensor functor defined as above.
Let $\varphi:B \o H\rightarrow H \o B$ (with notation $\varphi(b \o h) = \sum h^\varphi \o b^\varphi$) be a $\k$-linear map
satisfying
$$
\varphi \ci (\a_B \o \a_H) = (\a_H \o \a_B) \ci \varphi.
$$
For any $(X, \a_X) \in \overline{\mathcal{H}}^{i,j}(Vec_\Bbbk)$, if we define the natural transformation $\ddot{\varphi}(n)_X: \ddot{H}\ddot{B}(X)\rightarrow \ddot{B}\ddot{H}(X)$ by
\begin{eqnarray*}
\ddot{\varphi}(n)_X: (X \o B) \o H\rightarrow (X \o H) \o B,~~~~(x \o b) \o h\mapsto \sum (x \o {\a_H(h)}^\varphi) \o \a_B^{n}({\a_B^{-n-1}(b)}^\varphi),
\end{eqnarray*}
where $n$ is an integer, then we have the following property.

\begin{proposition}
$\ddot{\varphi}(n): \ddot{H}\ddot{B}\rightarrow \ddot{B}\ddot{H}$ is a comonad distributive law for any $n \in \mathbb{Z}$ if and only if the following equalities hold:
\\
$\left\{\begin{array}{l}
(M1)~ \sum  {{\a_H(h)}^\varphi}^\phi \o {b_1}^\phi \o {b_2}^\varphi = \sum \a_H(h^\varphi) \o {b^\varphi}_1 \o {b^\varphi}_2;\\
(M2)~ \sum {h_1}^\varphi \o {h_2}^\phi \o { {\a_B(b)}^\varphi}^\phi = \sum {h^\varphi}_1 \o {h^\varphi}_2 \o \a_B(b^\varphi);\\
(M3)~ \sum \v_H(h^\varphi) b^\varphi = \v_H(h) b;\\
(M4)~ \sum \v_B(b^\varphi) h^\varphi = \v_B(b) h.
\end{array}\right.$
\end{proposition}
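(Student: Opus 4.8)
The plan is to verify directly that $\ddot{\varphi}(n)$ satisfies each of the four comonad-distributive-law axioms (L1)--(L4) of Section~2.1 if and only if the corresponding identity among (M1)--(M4) holds, under the dictionary (L1)$\leftrightarrow$(M1), (L2)$\leftrightarrow$(M2), (L3)$\leftrightarrow$(M3), (L4)$\leftrightarrow$(M4). Here one reads the general definition with $F=\ddot{H}$ (whose comultiplication $\delta$ and counit $\epsilon$ are built from $\Delta_H,\varepsilon_H$) and $G=\ddot{B}$ (the analogous data built from $\Delta_B,\varepsilon_B$); thus (L1) and (L2) encode compatibility of $\ddot{\varphi}(n)$ with the comultiplications of $B$ and of $H$ respectively — each involving two copies of $\varphi$ and one coproduct, matching the doubled superscripts $\varphi,\phi$ in (M1) and (M2) — while (L3) and (L4) encode compatibility with the counits $\varepsilon_H$ and $\varepsilon_B$.

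First I would reduce each axiom, which is an equality of natural transformations between iterated tensor functors, to a plain identity in $B$, $H$ and $\varphi$. Since $\delta$, $\epsilon$ and $\ddot{\varphi}(n)$ all append tensor factors on the right, every composite occurring in (L1)--(L4) is left-parenthesised on both sides and no associativity constraint $a$ intervenes; the $X$-slot is acted on only by powers of $\alpha_X$ that agree termwise on the two sides. Consequently it suffices to evaluate on the generic element $(1_\Bbbk\otimes b)\otimes h$ with $X=\Bbbk$, $\alpha_\Bbbk=\mathrm{id}$, and to compare the resulting elements after discarding the common $\Bbbk$-factor. This collapses, for instance, (L3) to an identity in $B$ and (L1) to an identity in $H\otimes B\otimes B$.

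Next I would expand each side using the explicit formulas for $\delta$, $\epsilon$ and $\ddot{\varphi}(n)$ and then normalise the accumulated powers of $\alpha_B$ and $\alpha_H$. The two tools are the $\alpha$-compatibility $\varphi\circ(\alpha_B\otimes\alpha_H)=(\alpha_H\otimes\alpha_B)\circ\varphi$, used to pull equal powers through $\varphi$, and the Hom-coalgebra axioms $\Delta_B\circ\alpha_B=(\alpha_B\otimes\alpha_B)\circ\Delta_B$, $\varepsilon_H\circ\alpha_H=\varepsilon_H$ (and their $B$-analogues). For example, unfolding (L3) yields $\sum\varepsilon_H(\tilde h^\varphi)\,\alpha_B^{-1}(\tilde b^\varphi)$ on one side and $\varepsilon_H(h)\alpha_B^{-1}(b)$ on the other, where $\sum\tilde h^\varphi\otimes\tilde b^\varphi=\varphi(b\otimes\alpha_H^{n+2}(h))$; applying $\alpha_B$ and invoking $\varepsilon_H\circ\alpha_H^{k}=\varepsilon_H$ turns this into $(\varepsilon_H\otimes\mathrm{id}_B)\circ\varphi$ evaluated at $b\otimes\alpha_H^{n+2}(h)$, which after the substitution $h\mapsto\alpha_H^{-n-2}(h)$ is exactly (M3). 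The longer axioms (L1) and (L2) are handled identically: after the expansion both sides reduce, modulo an overall invertible operator built from powers of $\alpha_H$ and $\alpha_B$, to (M1), respectively (M2), with $h$ replaced by the fixed power $\alpha_H^{n+2}(h)$.

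The one point that needs care — and the only real obstacle — is precisely this $\alpha$-power bookkeeping: the maps $\delta$, $\epsilon$, $\ddot{\varphi}(n)$ and the unit constraints each contribute several powers of $\alpha_B$ and $\alpha_H$ (and of $n$), and one must track them carefully so as to recognise the clean output (Mi). The key observation that makes everything cancel, and simultaneously explains why the statement holds \emph{for every} $n\in\mathbb{Z}$, is that all the $n$-dependence collapses into an invertible substitution $h\mapsto\alpha_H^{n+2}(h)$ (respectively a substitution by a power of $\alpha_B$ on $b$): because (M1)--(M4) are quantified over all $b\in B$, $h\in H$ and $\alpha_B,\alpha_H$ are bijective, such a substitution is immaterial, so the equivalence is genuinely $n$-independent. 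Finally, each implication is reversible — applying the appropriate invertible power of $\alpha$'s together with the inverse substitution recovers (Li) from (Mi) — which yields the stated ``if and only if''.
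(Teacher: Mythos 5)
Your proposal is correct and takes essentially the same route as the paper: the paper likewise matches each diagram (L1)--(L4) with the corresponding identity (M1)--(M4), proving necessity by evaluating at $X=\Bbbk$ on elements pre-twisted by invertible powers of $\a_B,\a_H$ (e.g.\ $(1_\Bbbk\o\a_B^{n+1}(b))\o\a_H^{-1}(h)$ for (L1)), and proving sufficiency by the same expansion and $\a$-power bookkeeping using $\varphi\ci(\a_B\o\a_H)=(\a_H\o\a_B)\ci\varphi$. The only difference is organizational: the paper's converse carries a general object $X$ (and $\a_X(x)$) through the computation, whereas you first reduce to $X=\Bbbk$ via the observation that the $X$-slot receives identical powers of $\a_X$ on both sides — a valid simplification of the same argument.
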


\begin{proof}
$\Rightarrow$: Assume that $\ddot{\varphi}(n): \ddot{H}\ddot{B}\rightarrow \ddot{B}\ddot{H}$ is a comonad distributive law. Then from Diagram (L1), we get the following identity
\begin{eqnarray*}
&& ((\ddot{\varphi}(n)_\k \o id_B) \ci \ddot{\varphi}(n)_{\k \o B} \ci ((\delta_{\ddot{B}})_\k \o id_H) ) ( (1_\k \o \a^{n+1}_B(b)) \o \a_H^{-1}(h) ) \\
&&~~~~ = ( (\delta_{\ddot{B}})_{\k \o H} \ci \ddot{\varphi}(n)_\k )  ( (1_\k \o \a^{n+1}_B(b)) \o \a_H^{-1}(h) ),
\end{eqnarray*}
which implies (M1) holds.

The other equations can be deduced through a similar way.

$\Leftarrow$: Conversely, for any $(X ,\a_X) \in \overline{\mathcal{H}}^{i,j}(Vec_\Bbbk)$, note that $\ddot{\varphi}(n)_X$
is a morphism in $\overline{\mathcal{H}}^{i,j}(Vec_\Bbbk)$. Then
for any $x \in X$, $b \in B$, $h \in H$, we consider
\begin{eqnarray*}
&&((\ddot{\varphi}(n)_X \o id_B) \ci \ddot{\varphi}(n)_{X \o B} \ci ((\delta_{\ddot{B}})_X \o id_H) ) ( (x \o b) \o h ) \\
&=&(\ddot{\varphi}(n)_X \o id_B)  ( \sum ( (\a_X(x) \o b_1)\o {\a_H(h)}^\varphi ) \o \a_B^n({\a^{-n-2}_B(b_2)}^\varphi ) )\\
&=& \sum ( (\a_X(x) \o {\a_H({\a_H(h)}^\varphi)}^\phi )\o  \a_B^n({\a_B^{-n-1}(b_1)}^\phi) ) \o \a_B^n({\a^{-n-2}_B(b_2)}^\varphi )\\
&=& \sum ( (\a_X(x) \o \a_H({{\a_H(h)}^\varphi}^\phi) )\o  \a_B^{n+1}({\a_B^{-n-2}(b_1)}^\phi) ) \o \a_B^n({\a^{-n-2}_B(b_2)}^\varphi )\\
&\stackrel {(M1)}{=}& \sum ( (\a_X(x) \o \a^2_H({h}^\varphi) )\o  \a_B^{n+1}({{\a_B^{-n-2}(b)}^\varphi}_1) ) \o \a_B^n({{\a^{-n-2}_B(b)}^\varphi}_2 ) \\
&=& \sum ( (\a_X(x) \o \a_H({\a_H(h)}^\varphi) )\o  \a_B^{n}({{\a_B^{-n-1}(b)}^\varphi}_1) ) \o \a_B^{n-1}({{\a^{-n-1}_B(b)}^\varphi}_2 ) \\
&=& (\delta_{\ddot{B}})_{X \o H}  (\sum (x \o {\a_H(h)}^\varphi)  \o (\a_B^n({\a_B^{-n-1}(b)}^\varphi))) \\
&=& ( (\delta_{\ddot{B}})_{X \o H} \ci \ddot{\varphi}(n)_X ) ( (x \o b) \o h ),
\end{eqnarray*}
thus the Diagram (L1) is commute. Similarly to Diagram (L2) - (L4).
\end{proof}

\begin{definition}
Suppose that $(B,\a_B)$ and $(H,\a_H)$ are two Hom-coalgebras over $\k$, $\varphi:B \o H\rightarrow H \o B$ (with notation $\varphi(b \o h) = \sum h^\varphi \o b^\varphi$) is a $\k$-linear map
satisfying $\varphi \ci (\a_B \o \a_H) = (\a_H \o \a_B) \ci \varphi$. If $\varphi$ satisfies Eqs. (M1) - (M4), then we call $\varphi:B \o H\rightarrow H \o B$ a \emph{Hom-cotwistor}.

Further, $B \o H = (B \o H, \a_B \o \a_H,\Delta_{B \o H}, \v_{B \o H})$ is called the \emph{smash coproduct of Hom-coalgebras $B$ and $H$}.
\end{definition}

\begin{theorem}
Let $B,H,\varphi$ and $\ddot{\varphi}(n)$ are defined as above. If we define $\Delta_{B \o H}: B \o H\rightarrow (B \o H) \o (B \o H)$, $\varepsilon_{B \o H}: B \o H\rightarrow \k$ by
$$
\Delta_{B \o H}(b \o h) := \sum (b_1 \o {h_1}^\varphi) \o ( {b_2}^\varphi \o h_2 ),
$$
$$
\v_{B \o H}(b \o h) := \v_B(b)\v_H(h), ~~~~\mbox{where}~~b \in B,~~h \in H,
$$
then the following statements are equivalent:

(1) $\varphi$ is a Hom-cotwistor;

(2) $\ddot{\varphi}(n): \ddot{H}\ddot{B}\rightarrow \ddot{B}\ddot{H}$ is a comonad distributive law;

(3) $\ddot{H}\ddot{B} = ( \_ \o B) \o H$ is a comonad in $\overline{\mathcal{H}}^{i,j}(Vec_\Bbbk)$;

(4) The smash coproduct, $B \o H = (B \o H, \a_B \o \a_H,\Delta_{B \o H}, \v_{B \o H})$, is a Hom-coalgebra over $\k$;

(5) $\_ \o (B \o H)$, the right tensor functor of $B \o H$, is a comonad in $\overline{\mathcal{H}}^{i,j}(Vec_\Bbbk)$.
\end{theorem}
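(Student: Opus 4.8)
The plan is to establish the chain of equivalences $(1)\Leftrightarrow(2)\Leftrightarrow(3)\Leftrightarrow(4)\Leftrightarrow(5)$ by exploiting the dictionary already set up in Sections 2 and 4, reducing almost everything to statements that have been proved or can be read off from the comonad formalism. The key observation is that the whole theorem is the ``coalgebra half'' of the bicomonad correspondence specialised to the composite functor $\ddot{H}\ddot{B}$, so most arrows in the cycle are immediate restatements of earlier results. First I would prove $(1)\Leftrightarrow(2)$ by invoking Proposition 4.2 directly: that proposition states precisely that $\ddot{\varphi}(n)$ is a comonad distributive law for all $n\in\mathbb{Z}$ if and only if $\varphi$ satisfies (M1)--(M4), which is the definition of a Hom-cotwistor from Definition 4.3. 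Hence $(1)\Leftrightarrow(2)$ is nothing more than a reference to Proposition 4.2 together with the definition.

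\textbf{The comonad equivalences.}
Next I would handle $(2)\Leftrightarrow(3)$ using Remark 2.1(2): a comonad distributive law $\varphi\colon FG\Rightarrow GF$ is equivalent to the assertion that the composite $FG$ carries the smash-coproduct comonad structure $(FG,\,F\varphi G\circ\Delta\delta,\,\varepsilon\epsilon)$. Specialising $F=\ddot{H}$, $G=\ddot{B}$, and $\varphi=\ddot{\varphi}(n)$, this says exactly that $\ddot{\varphi}(n)$ is a comonad distributive law if and only if $\ddot{H}\ddot{B}=(\_\otimes B)\otimes H$ is a comonad on $\overline{\mathcal{H}}^{i,j}(Vec_\Bbbk)$, which is $(3)$. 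So $(2)\Leftrightarrow(3)$ is a direct application of Remark 2.1.

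\textbf{Identifying the composite functor with a single tensor functor.}
For $(3)\Leftrightarrow(4)\Leftrightarrow(5)$ the point is that the functor $\ddot{H}\ddot{B}=(\_\otimes B)\otimes H$ must be recognised as the right tensor functor $\_\otimes(B\otimes H)$ of the object $B\otimes H$, up to the associativity constraint $a$ of $\overline{\mathcal{H}}^{i,j}(Vec_\Bbbk)$; since $a$ is a natural isomorphism, $(\_\otimes B)\otimes H$ and $\_\otimes(B\otimes H)$ are isomorphic as functors, whence $(3)\Leftrightarrow(5)$. Then $(5)\Leftrightarrow(4)$ is an instance of Lemma 4.1 applied to the object $(B\otimes H,\a_B\otimes\a_H)$ with the stated comultiplication $\Delta_{B\otimes H}$ and counit $\v_{B\otimes H}$: Lemma 4.1 says that $\_\otimes(B\otimes H)$ is a comonad if and only if $(B\otimes H,\a_B\otimes\a_H,\Delta_{B\otimes H},\v_{B\otimes H})$ is a Hom-coalgebra. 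The only genuine verification here is that the comonad structure $\delta,\epsilon$ on $\ddot{H}\ddot{B}$ transported through the associativity isomorphism reproduces exactly the formulas for $\Delta_{B\otimes H}$ and $\v_{B\otimes H}$ given in the statement; this is a bookkeeping computation matching $F\varphi G\circ\Delta\delta$ against the claimed $\Delta_{B\otimes H}(b\otimes h)=\sum(b_1\otimes{h_1}^\varphi)\otimes({b_2}^\varphi\otimes h_2)$.

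\textbf{The main obstacle.}
I expect the hard part to be that last compatibility check: unwinding the smash-coproduct comultiplication $\ddot{H}(\ddot{\varphi}(n))\ddot{B}\circ\Delta_{\ddot{H}}\delta_{\ddot{B}}$ on $(X\otimes B)\otimes H$, transporting it across the associativity constraint $a$ (which inserts the powers $\a^{i+1}$, $\a^{-j-1}$ and the shift $\a_B^n$ built into $\ddot{\varphi}(n)$), and verifying that the twisting exponents cancel so that the induced coaction on $\_\otimes(B\otimes H)$ agrees with the $n$-independent formula for $\Delta_{B\otimes H}$. Tracking the $\a$-powers carefully so that the $n$ appearing in $\ddot{\varphi}(n)$ drops out of the final Hom-coalgebra structure is the delicate bookkeeping; everything else is a citation of Proposition 4.2, Remark 2.1, and Lemma 4.1.
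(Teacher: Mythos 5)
Your steps $(1)\Leftrightarrow(2)$ (the distributive-law proposition plus the definition of Hom-cotwistor), $(2)\Leftrightarrow(3)$ (Remark 2.1) and $(5)\Leftrightarrow(4)$ (Lemma 4.1 applied to $B\o H$) are exactly the citations the paper itself uses, and they are fine. The genuine gap is your step $(3)\Leftrightarrow(5)$, which is precisely where you depart from the paper: the paper never compares the two comonads by a transport argument, but instead proves $(1)\Leftrightarrow(4)$ by a direct computation (Eqs.\ (M1)--(M2) yield Hom-coassociativity of $\Delta_{B\o H}$ and conversely, while (M3)--(M4) yield the Hom-counit law). Statements (3) and (5) assert that two \emph{specific} natural transformations satisfy the comonad axioms, so an isomorphism of underlying functors is not enough; you need a natural isomorphism that intertwines the smash-coproduct structure $(\ddot{H}\ddot{\varphi}(n)\ddot{B}\ci\Delta\delta,\ \varepsilon\epsilon)$ on $(\_\o B)\o H$ with the Lemma 4.1 structure induced by $(\Delta_{B\o H},\v_{B\o H})$ on $\_\o(B\o H)$, and the associativity constraint $a_{X,B,H}$ does \emph{not} do this. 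Already the counits disagree: the smash counit is $(x\o b)\o h\mapsto \v_B(b)\v_H(h)\a_X^{-2}(x)$, and transporting it along $a^{-1}_{X,B,H}\bigl(x\o(b\o h)\bigr)=(\a_X^{-i-1}(x)\o b)\o\a_H^{j+1}(h)$ gives $x\o(b\o h)\mapsto \v_B(b)\v_H(h)\a_X^{-i-3}(x)$, whereas the counit in (5) is $x\o(b\o h)\mapsto \v_B(b)\v_H(h)\a_X^{-1}(x)$; these agree only when $i=-2$. The comultiplications likewise differ by nonzero powers of $\a_X,\a_B,\a_H$, so the cancellation of twisting exponents you were counting on simply does not occur for $a$.

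The strategy is repairable, but with a different intertwiner: one can check that
$$
\tau_X\bigl((x\o b)\o h\bigr):=\a_X^{-1}(x)\o\bigl(\a_B^{q}(b)\o\a_H^{q+n+2}(h)\bigr)
$$
(any fixed $q\in\mathbb{Z}$, e.g.\ $q=0$) is a natural isomorphism $(\_\o B)\o H\rightarrow \_\o(B\o H)$ in $\overline{\mathcal{H}}^{i,j}(Vec_\k)$ carrying the smash-coproduct comonad structure exactly onto the structure induced by $\Delta_{B\o H}$ and $\v_{B\o H}$; the verification uses $\varphi\ci(\a_B\o\a_H)=(\a_H\o\a_B)\ci\varphi$ to slide powers of $\a$ through $\varphi$, and it is precisely the factor $\a_H^{q+n+2}$ that absorbs the shift $n$ built into $\ddot{\varphi}(n)$ --- it is not cancelled for free. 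With $\tau$ in place of $a$, your chain $(1)\Leftrightarrow(2)\Leftrightarrow(3)\Leftrightarrow(5)\Leftrightarrow(4)$ closes and yields a proof genuinely different from the paper's, which instead verifies $(1)\Leftrightarrow(4)$ by hand and only then invokes Lemma 4.1 for $(4)\Leftrightarrow(5)$. As written, however, the central step of your proposal fails.
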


\begin{proof}
(1)$\Leftrightarrow$(2): See Proposition 4.4.

(2)$\Leftrightarrow$(3): See Remark 2.1.

(1)$\Rightarrow$(4): For any $b \in B$, $h \in H$, it is a direct computation to check that
\begin{eqnarray*}
\Delta_{B \o H} (\a_B(b) \o \a_H(h))=(( \a_B \o \a_H ) \o ( \a_B \o \a_H ))(\Delta_{B \o H}(b \o h)),
\end{eqnarray*}
and
\begin{eqnarray*}
\v_{B \o H} (\a_B(b) \o \a_H(h))= \v_B(b) \v_H(h) = \v_{B \o H} (b \o h).
\end{eqnarray*}

Secondly, we have
\begin{eqnarray*}
&&\sum \a_B(b_1) \o \a_H({h_1}^\varphi)  \o   {{b_2}^\varphi}_1 \o {h_{21}}^\phi  \o  {{{b_2}^\varphi}_2}^\phi  \o h_{22}  \\
&\stackrel {(M1)}{=}& \sum \a_B(b_1) \o {{\a_H(h_1)}^\varphi}^\psi  \o   {b_{21}}^\psi \o {h_{21}}^\phi  \o  {{b_{22}}^\varphi}^\phi  \o h_{22} \\
&=& \sum b_{11} \o {{h_{11}}^\varphi}^\psi  \o   {b_{12}}^\psi \o {h_{12}}^\phi  \o  {{\a_B(b_{2})}^\varphi}^\phi  \o \a_H(h_{2}) \\
&\stackrel {(M2)}{=}& \sum b_{11} \o {{{h_{1}}^\varphi}_{1}}^\psi  \o   {b_{12}}^\psi \o {{h_{12}}^\phi}_2  \o  \a_B({b_{2}}^\varphi)  \o \a_H(h_{2}),
\end{eqnarray*}
which implies the Hom-coassociative law. Similarly one can get the Hom-counit law. Thus $B \o H$ is a Hom-coalgebra.

(4)$\Rightarrow$(1): If $B \o H$ is a Hom-coalgebra, then for any $b \in B$, $h \in H$, we have
\begin{eqnarray*}
&& ~~~\sum (\a_B \o \a_H)(b_1\o {h_1}^\varphi) \o \Delta_{B \o H} ( {b_2}^\varphi \o h_2 )\\
&&= \sum \Delta_{B \o H}(b_1\o {h_1}^\varphi) \o (\a_B \o \a_H) ( {b_2}^\varphi \o h_2 ),
\end{eqnarray*}
which implies
\begin{eqnarray*}
&& ~~~\sum (\a_B(b_1) \o \a_H({h_1}^\varphi) ) \o  ( ( {{b_2}^\varphi}_1 \o {h_{21}}^\phi ) \o ( {{{b_2}^\varphi}_2}^\phi  \o h_{22}) ) \\
&&= \sum ( ( b_{11} \o {{{h_1}^\varphi}_1}^\phi ) \o ( {b_{12}}^\phi \o {{h_1}^\varphi}_2) )  \o ( \a_B({b_2}^\varphi) \o \a_H(h_2) ).
\end{eqnarray*}
Use $\v_B \o id_H\o id_B \o \v_H \o id_B \o \v_H$ to action at the above identity, then we get (M1). Use
$\v_B \o id_H\o \v_B \o id_H \o id_B \o \v_H$ to action at the above identity, then we get (M2).

Similarly, one can get (M3) and (M4) through the Hom-counit law of $B \o H$.

(4)$\Leftrightarrow$(5): See Lemma 4.1.
\end{proof}

Note that if $\ddot{\varphi}(n): \ddot{H}\ddot{B}\rightarrow \ddot{B}\ddot{H}$ is a comonad distributive law, $(U,\a_U)$ is an object in the category of $\ddot{\varphi}(n)$-bicomodules, then it means that $(U, \a_U)$ is both $B$-Hom-comodule (with notation $u\mapsto u_{[0]} \o u_{[1]}$) and $H$-Hom-comodule(with notation $u\mapsto u_{(0)} \o u_{(1)}$),
and satisfies the following identity
$$
{u_{[0]}}_{(0)} \o {u_{[0]}}_{(1)} \o u_{[1]} = {u_{(0)}}_{[0]} \o {\a_H(u_{(1)})}^\varphi \o \a_B^n({\a_B^{-n-1}({u_{(0)}}_{[1]})}^\varphi).
$$
We write the category of $\ddot{\varphi}(n)$-bicomodules by $\overline{\mathcal{H}}^{i,j}(Vec_\Bbbk)^{(\ddot{H},\ddot{B})}(\ddot{\varphi}(n))$, and write the category of $B \o H$-Hom-comodules by $Corep(B \o H)$,
then we have the following property.

\begin{proposition}
For all integer $n$, $\overline{\mathcal{H}}^{i,j}(Vec_\Bbbk)^{(\ddot{H},\ddot{B})}(\ddot{\varphi}(n))$ is isomorphic to $Corep(B \o H)$.
\end{proposition}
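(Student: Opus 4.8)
The plan is to prove the isomorphism abstractly, by reducing both categories to comodule categories over a single comonad and then identifying that comonad with the tensor functor of the smash coproduct Hom-coalgebra. First I would invoke Remark 2.3, which identifies the category of $\ddot{\varphi}(n)$-bicomodules with the category of comodules over the smash coproduct comonad $\ddot{H}\ddot{B}$ of Definition 2.2; thus
$$\overline{\mathcal{H}}^{i,j}(Vec_\Bbbk)^{(\ddot{H},\ddot{B})}(\ddot{\varphi}(n)) \cong \overline{\mathcal{H}}^{i,j}(Vec_\Bbbk)^{\ddot{H}\ddot{B}}.$$
At the other end, applying Lemma \ref{cmd} to the Hom-coalgebra $B \o H$ (which is a Hom-coalgebra by Theorem 4.5, since $\varphi$ is a Hom-cotwistor) gives $\overline{\mathcal{H}}^{i,j}(Vec_\Bbbk)^{\_\o(B \o H)} = Corep(B \o H)$. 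So the whole statement reduces to a single comparison of comonads in the middle.

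The main step is therefore to show that $\ddot{H}\ddot{B}$ and $\_\o(B \o H)$ are isomorphic as comonads on $\overline{\mathcal{H}}^{i,j}(Vec_\Bbbk)$. Their underlying functors send $(X,\a_X)$ to $(X \o B)\o H$ and to $X \o (B \o H)$ respectively, and these are connected by the associativity constraint $a_{X,B,H}$. I would check that $a_{X,B,H}$ is an isomorphism of comonads: that it intertwines the counits, where the target counit is $\v_{B \o H}=\v_B\v_H$ and the source counit is the composite $\v\epsilon$ of Definition 2.2; and, more delicately, that it carries the smash coproduct comultiplication $\ddot{H}\ddot{\varphi}(n)\ddot{B}\ci \Delta\delta$ onto the comultiplication of $\_\o(B \o H)$ determined by $\Delta_{B \o H}(b \o h)=\sum (b_1 \o {h_1}^\varphi)\o ({b_2}^\varphi \o h_2)$. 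This is precisely the point where the Hom-cotwistor axioms (M1)--(M4) and the explicit form of $\ddot{\varphi}(n)$ are consumed. Once $a$ is a comonad isomorphism, the induced functor on comodule categories is an isomorphism $\overline{\mathcal{H}}^{i,j}(Vec_\Bbbk)^{\ddot{H}\ddot{B}} \cong \overline{\mathcal{H}}^{i,j}(Vec_\Bbbk)^{\_\o(B \o H)}$, and composing the three identifications finishes the proof.

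The hard part will be this comonad-compatibility verification, for two reasons. The associativity constraint of $\overline{\mathcal{H}}^{i,j}(Vec_\Bbbk)$ is genuinely non-strict, carrying the bookkeeping powers $\a^{i+1}$ and $\a^{-j-1}$, so the match of comultiplications is not a formal manipulation but a calculation that must track these powers against the powers of $\a_B$ and $\a_H$ appearing in $\delta$, $\ddot{B}_2$ and $\ddot{H}_2$. More subtly, the source comonad depends on the integer $n$ through $\ddot{\varphi}(n)$, whereas the target comonad $\_\o(B \o H)$ and the comultiplication $\Delta_{B \o H}$ do not. The crux is to confirm that the $n$-dependent renormalization by powers of $\a_B$ built into $\ddot{\varphi}(n)$ is exactly absorbed by the ($n$-dependent) isomorphism $a$ together with the coaction formulas, so that one uniform comonad isomorphism is available for every $n$; this is what makes the statement hold for all integers $n$ simultaneously.

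As an alternative to the abstract route, the same equivalence can be exhibited explicitly, which amounts to the same computation in coordinates. One sends a bicomodule $(U,\, u\mapsto u_{[0]} \o u_{[1]},\, u\mapsto u_{(0)} \o u_{(1)})$ to the $B \o H$-Hom-comodule whose coaction is assembled from the two partial coactions, with the appropriate powers of $\a$ and a single insertion of $\varphi$ dictated by $\Delta_{B \o H}$; conversely one sends a $B \o H$-comodule back to its two partial coactions obtained by applying $id \o id \o \v_H$ and $id \o \v_B \o id$ to the single coaction. Verifying that these assignments are well defined $C$-colinear functors, that the target of the first is genuinely a $B \o H$-comodule (this re-encodes the bicomodule compatibility displayed before the proposition), and that the two functors are mutually inverse, reproduces precisely the content of the comonad-compatibility check above.
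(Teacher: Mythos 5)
Your overall reduction is legitimate and, had it worked, would have given a more structural proof than the paper's: Remark 2.3 does identify $\overline{\mathcal{H}}^{i,j}(Vec_\Bbbk)^{(\ddot{H},\ddot{B})}(\ddot{\varphi}(n))$ with comodules over the smash coproduct comonad $\ddot{H}\ddot{B}$, Lemma 4.1 does identify $Corep(B \o H)$ with comodules over $\_ \o (B \o H)$, and any isomorphism of comonads induces an isomorphism of comodule categories (the paper instead writes down the mutually inverse functors $P_n$, $Q_n$ by hand). The gap is in your key step: the associativity constraint $a_{X,B,H}$ is \emph{not} a morphism of comonads from $\ddot{H}\ddot{B}$ to $\_ \o (B \o H)$, and the verification you defer would fail. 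This is already visible at the level of counits: the counit of the smash coproduct comonad is the horizontal composite $\varepsilon\epsilon$, which on elements is $(x \o b) \o h \mapsto \v_B(b)\v_H(h)\,\a_X^{-2}(x)$, whereas the counit of $\_ \o (B \o H)$ precomposed with $a_{X,B,H}$ gives $(x \o b) \o h \mapsto \v_B(b)\v_H(h)\,\a_X^{i}(x)$; these agree only if $i=-2$, while the proposition is asserted in $\overline{\mathcal{H}}^{i,j}(Vec_\Bbbk)$ for arbitrary $i,j$. The comultiplication comparison fails as well: transporting $\ddot{H}\ddot{\varphi}(n)\ddot{B} \ci \Delta\delta$ along $a$ produces, in the outer slots, $\a_X^{2i+4}(x)$ and $\a_B^{i+2}(b_1)$, and a copy of $\varphi$ evaluated at $\a_B^{-n-1}(b_2) \o \a_H(h_1)$, while the comultiplication of $\_ \o (B \o H)$ pulled back along $a$ produces $\a_X^{i+2}(x)$, $b_1$ untouched, and $\varphi$ evaluated at $b_2 \o \a_H^{-j-1}(h_1)$; since $\varphi$ is only assumed to satisfy $\varphi \ci (\a_B \o \a_H) = (\a_H \o \a_B) \ci \varphi$, these two evaluations of $\varphi$ differ by a relative shift of the two arguments and cannot be identified in general.

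The root of the problem is your parenthetical claim that $a$ is ``$n$-dependent'': it is not. The constraint $a_{X,B,H}((x \o b)\o h)=\a_X^{i+1}(x)\o(b \o \a_H^{-j-1}(h))$ depends on $i,j$ only and leaves the middle factor $B$ untouched, so it has no mechanism to absorb the renormalization $\a_B^{n}({\a_B^{-n-1}(\cdot)}^\varphi)$ that $\ddot{\varphi}(n)$ inserts into the smash comultiplication. What does work is the genuinely $n$-dependent natural isomorphism $\nu_X\colon (X \o B) \o H \to X \o (B \o H)$, $(x \o b) \o h \mapsto \a_X^{-1}(x) \o (\a_B^{-n-1}(b) \o \a_H(h))$: it intertwines the counits (because $\v_B \ci \a_B^{-n-1} = \v_B$ and $\v_H \ci \a_H = \v_H$), and on the comultiplication side both transports produce the \emph{same} single evaluation of $\varphi$ at $\a_B^{-n-1}(b_2) \o \a_H(h_1)$ with matching outer powers, so the $n$'s cancel exactly. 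This $\nu$ is precisely what the paper's functors $P_n$ and $Q_n$ encode on coactions: composing the $\ddot{H}\ddot{B}$-coaction $u \mapsto ({u_{(0)}}_{[0]} \o {u_{(0)}}_{[1]}) \o u_{(1)}$ with $\nu_U$ yields the $B \o H$-coaction. The same caveat applies to your coordinate-level alternative, where ``the appropriate powers of $\a$'' are left unspecified: those powers are the entire content of the proposition, and the ones coming from the associativity constraint are the wrong ones. If you replace $a$ by $\nu$ throughout, your argument goes through; as written, it does not.
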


\begin{proof}
We define a functor $P_n:Corep(B \o H) \rightarrow \overline{\mathcal{H}}^{i,j}(Vec_\Bbbk)^{(\ddot{H},\ddot{B})}(\ddot{\varphi}(n))$ as follows:

$\bullet$ for any object $(U,\a_U) \in Corep(B \o H)$ (for any $u \in U$, the coaction $U\rightarrow U \o (B \o H)$ is written by $u\mapsto \sum u_{<0>} \o (u_{<1>B} \o u_{<1>H})$),
$P_n(U,\a_U):=(U,\a_U)$, the coaction $\theta^U:U\rightarrow U \o H$ and $\rho^U:U\rightarrow U \o B$ are given by:
\begin{eqnarray*}
&&\theta^U(u) = u_{(0)} \o u_{(1)} := \sum u_{<0>} \o \v_B(u_{<1>B}) \a_H^{-1}(u_{<1>H}),\\
&&\rho^U(u) = u_{[0]} \o u_{[1]} := \sum u_{<0>} \o \a_B^{n}(u_{<1>B}) \v_H(u_{<1>H});
\end{eqnarray*}

$\bullet$ for any morphism $\varpi: (U,\a_U)\rightarrow (V,\a_V)$, $P_n(\varpi):=\varpi$.

Firstly, we have $\v_H(u_{(1)})u_{(0)} = \v_{B \o H}(u_{<1>}) u_{<0>} = \a_U(u)$, and
$\theta^U(\a_U(u)) = \a_U(u_{(0)}) \o \a_H(u_{(1)})$.
Further, for any $u \in U$, we compute
\begin{eqnarray*}
&&~~~\a_U(u_{(0)}) \o \Delta_H(u_{(1)}) \\
&&= \sum \a_U(u_{<0>}) \o \v_B(u_{<1>B}) \a_H^{-1}(u_{<1>H1}) \o \a_H^{-1}(u_{<1>H2}) \\
&&= \sum \a_U(u_{<0>}) \o \v_B(u_{<1>B1}) \a_H^{-1}({u_{<1>H1}}^\varphi) \v_B({u_{<1>B2}}^\varphi) \o \a_H^{-1}(u_{<1>H2})\\
&&= \sum \a_U(u_{<0>}) \o (\v_B \o \a_H^{-1})(u_{<1>B} \o u_{<1>H})_1 \o (\v_B \o \a_H^{-1})(u_{<1>B} \o \a_H^{-1}(u_{<1>H})_2) \\
&&= \sum u_{<0><0>} \o \v_B({u_{<0>}}_{<1>B}) \a_H^{-1}({u_{<0>}}_{<1>H}) \o \v_B(u_{<1>B}) u_{<1>H} \\
&&= \theta^U(u_{(0)}) \o \a_H(u_{(1)}).
\end{eqnarray*}
Thus $(U, \a_U, \theta^U)$ is a right $H$-Hom-comodule.

Secondly, we can show that $(U, \a_U, \rho^U)$ is a right $B$-Hom-comodule similarly.

Thirdly, we have
\begin{eqnarray*}
&&~~~{u_{(0)}}_{[0]} \o {\a_H(u_{(1)})}^\varphi \o \a_B^n({\a_B^{-n-1}({u_{(0)}}_{[1]})}^\varphi) \\
&&= \sum u_{<0><0>} \o \v_B(u_{<1>B}) (u_{<1>H})^\varphi \o \v_H(u_{<0><1>H} ) \a_B^n({\a_B^{-1}( u_{<0><1>B} )}^\varphi) \\
&&= \sum \a_U(u_{<0>}) \o \v_B({u_{<1>B2}}^\phi) {\a_H^{-1}(u_{<1>H2})}^\varphi \o \v_H({u_{<1>H1}}^\phi) \a_B^n({\a_B^{-1}( u_{<1>B1} )}^\varphi) \\
&&= \sum \a_U(u_{<0>}) \o \v_B(u_{<1>B1})\a_H^{-1}({u_{<1>H1}}^\varphi) \o \v_H(u_{<1>H2}) \a_B^{n-1}({ u_{<1>B2} }^\varphi) \\
&&= {u_{[0]}}_{(0)} \o {u_{[0]}}_{(1)} \o u_{[1]}.
\end{eqnarray*}

At last, it is easily to show that $P(\varpi)$ is both $B$-colinear and $H$-colinear for any morphism $\varpi: (U,\a_U)\rightarrow (V,\a_V)$ in $Corep(B \o H)$.
Thus $P_n$ is well defined.

Conversely, Define a functor $Q_n:\overline{\mathcal{H}}^{i,j}(Vec_\Bbbk)^{(\ddot{H},\ddot{B})}(\ddot{\varphi}(n)) \rightarrow Corep(B \o H)$ as follows:

$\bullet$ for any object $(U,\a_U) \in \overline{\mathcal{H}}^{i,j}(Vec_\Bbbk)^{(\ddot{H},\ddot{B})}(\ddot{\varphi}(n))$,
$Q_n(U,\a_U):=(U,\a_U)$, the coaction $\varsigma^U:U\rightarrow U \o (B \o H)$ is given by:
\begin{eqnarray*}
\varsigma^U(u) = u_{<0>} \o u_{<1>} := \sum \a^{-1}_U({u_{(0)}}_{[0]}) \o ( \a_B^{-n-1}({u_{(0)}}_{[1]}) \o \a_H^{-1}(u_{(1)}) );
\end{eqnarray*}

$\bullet$ for any morphism $\varpi: (U,\a_U)\rightarrow (V,\a_V)$, $Q_n(\varpi):=\varpi$.

It is straightforward to check that $Q_n$ is well defined, and $P_n$ is the inverse of $Q_n$.
\end{proof}

Now assume that $(B,\a_B)$ and $(H,\a_H)$ are all Hom-bialgebras over $\k$, $\varphi:B \o H\rightarrow H \o B$ a
$\k$-linear map, $\ddot{\varphi}(n): \ddot{H}\ddot{B}\rightarrow \ddot{B}\ddot{H}$ is defined as above. If we define
$\ddot{B}_2$, $\ddot{B}_0$ and $\ddot{H}_2$, $\ddot{H}_0$ as in Lemma \ref{mdl}, then we have the following conclusion.

\begin{proposition}
$\ddot{\varphi}(n): \ddot{H}\ddot{B}\rightarrow \ddot{B}\ddot{H}$ is a monoidal comonad distributive law on $\overline{\mathcal{H}}^{i,j}(Vec_\Bbbk)$ for any $n \in \mathbb{Z}$ if and only if for any $a,b \in B$, $h,g \in H$,
$\varphi$ is a Hom-cotwistor and satisfies
\\
$\left\{\begin{array}{l}
(M5)~ \sum  h^\varphi g^\phi \o a^\varphi b^\phi = \sum (hg)^\varphi \o (ab)^\varphi;\\
(M6)~ \sum {1_H}^\varphi \o {1_B}^\varphi = 1_H \o 1_B.
\end{array}\right.$
\end{proposition}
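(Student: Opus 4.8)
The statement to prove is Proposition 4.8, which asserts that $\ddot{\varphi}(n)$ is a \emph{monoidal} comonad distributive law if and only if $\varphi$ is a Hom-cotwistor satisfying the two multiplicativity conditions (M5) and (M6). The strategy is to feed this into the general machinery already assembled in Section~3. Since $B$ and $H$ are now Hom-bialgebras, Theorem~4.3 tells us that $\ddot{B}$ and $\ddot{H}$ are genuine bicomonads on $\overline{\mathcal{H}}^{i,j}(Vec_\Bbbk)$, with the monoidal structures $(\ddot{B}_2,\ddot{B}_0)$ and $(\ddot{H}_2,\ddot{H}_0)$ from Lemma~\ref{mdl}. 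By Proposition~4.4 (equivalently the $(1)\Leftrightarrow(2)$ of Theorem~4.5), $\ddot{\varphi}(n)$ is already a comonad distributive law precisely when $\varphi$ is a Hom-cotwistor, i.e.\ when (M1)--(M4) hold. So the only remaining content is to show that, \emph{given} that $\varphi$ is a Hom-cotwistor, the extra ``monoidal'' conditions a) and b) of Lemma~3.1 are equivalent to (M5) and (M6).

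\textbf{Key steps.}
First I would invoke Lemma~3.1: with $F=\ddot{H}$, $G=\ddot{B}$ and $\varphi$ replaced by $\ddot{\varphi}(n)$, the distributive law is monoidal exactly when
\[
\ddot{\varphi}(n)_{M\o N}\ci \ddot{H}(\ddot{B}_2(M,N))\ci \ddot{H}_2(\ddot{B}M,\ddot{B}N)
= \ddot{B}(\ddot{H}_2(M,N))\ci \ddot{B}_2(\ddot{H}M,\ddot{H}N)\ci(\ddot{\varphi}(n)_M \o \ddot{\varphi}(n)_N)
\]
holds (condition a)), together with the unit condition b), $\ddot{\varphi}(n)_\k\ci\ddot{H}(\ddot{B}_0)\ci\ddot{H}_0 = \ddot{B}(\ddot{H}_0)\ci\ddot{B}_0$. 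The main task is then to evaluate both natural transformations in a) on a suitable generic element and read off (M5). Since each functor is a right tensoring by $B$ or $H$, the sensible choice is to take $M=N=\k$ and apply both sides to a generator of the form $\big((1_\k\o a)\o h\big)\o\big((1_\k\o b)\o g\big)$ (with appropriate powers of $\a_B,\a_H$ inserted, as in the proofs of Lemma~\ref{mdl} and Theorem~4.5). Unwinding $\ddot{B}_2$, $\ddot{H}_2$ and the explicit formula for $\ddot{\varphi}(n)$ on both sides, the $\a$-powers cancel exactly as they do in the earlier lemmas of this section, and the equality of the $H$- and $B$-components collapses to $\sum h^\varphi g^\phi \o a^\varphi b^\phi = \sum (hg)^\varphi \o (ab)^\varphi$, which is (M5). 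For condition b), applying both sides to $1_\k$ and using $\ddot{B}_0(\lambda)=\lambda\o 1_B$, $\ddot{H}_0(\lambda)=\lambda\o 1_H$ reduces directly to $\sum {1_H}^\varphi\o{1_B}^\varphi = 1_H\o 1_B$, which is (M6).

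\textbf{The main obstacle.}
I expect the hard part to be the bookkeeping of the automorphism twists in condition a): the formula for $\ddot{\varphi}(n)_X$ carries the shifts $\a_H(h)^\varphi$ and $\a_B^n(\a_B^{-n-1}(b)^\varphi)$, and both $\ddot{B}_2$ and $\ddot{H}_2$ introduce their own powers $\a_H^i,\a_H^j$ (resp.\ $\a_B^i,\a_B^j$), so one must choose the normalizing powers of $\a$ on the input element carefully to make every twist cancel and leave a clean identity in $B$ and $H$. The compatibility $\varphi\ci(\a_B\o\a_H)=(\a_H\o\a_B)\ci\varphi$ is precisely the tool that lets these powers commute past $\varphi$, exactly as it was used in Proposition~4.4; once that is exploited the $\a$-dependence and the parameter $n$ drop out entirely, confirming that the monoidality is independent of $n$. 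The remaining verification — that b) is genuinely equivalent to (M6) and not merely implied — is routine since $\ddot{H}_0$ and $\ddot{B}_0$ are isomorphisms onto the unit component, so no information is lost in the reduction. Beyond this computation, no conceptual difficulty arises: the equivalence is really just Lemma~3.1 specialized and made explicit.
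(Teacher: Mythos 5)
Your overall strategy coincides with the paper's own proof: both arguments dispose of the Hom-cotwistor part via Proposition 4.4, identify the extra monoidality with conditions a) and b) of Lemma 3.1, extract (M5) by evaluating condition a) at $M=N=\k$ on elements carrying carefully chosen powers of $\a_B$ and $\a_H$ (the paper uses $((1_\k \o \a_B^{-i-n-1}(a)) \o \a_H^{-i-1}(h)) \o ((1_\k \o \a_B^{-j-n-1}(b)) \o \a_H^{-j-1}(g))$, exactly the normalization you describe), and obtain (M6) from condition b) in the same way.

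The one place where your sketch is too quick is the converse implication. Verifying condition a) at the single pair $(M,N)=(\k,\k)$ does not by itself yield a monoidal comonad distributive law: condition a) must hold for \emph{all} pairs of objects, and in $\overline{\mathcal{H}}^{i,j}(Vec_\Bbbk)$ you cannot promote the $(\k,\k)$ instance to the general one by naturality, because a morphism $\k \rightarrow (X,\a_X)$ in this category corresponds to an $\a_X$-fixed vector, so general elements of $X$ are not reachable from $\k$. (Your remark that b) is ``genuinely equivalent and not merely implied'' addresses a non-issue, since b) is a single equation at the unit; the quantification problem actually sits in a).) The repair is exactly what the paper does in its $\Leftarrow$ direction: run the same unwinding computation for arbitrary $(X,\a_X)$, $(Y,\a_Y)$ and arbitrary $x \in X$, $y \in Y$. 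Since $\ddot{H}_2$, $\ddot{B}_2$ and $\ddot{\varphi}(n)$ act on the $X$ and $Y$ tensor factors only by identities, the elements $x,y$ ride along passively and the computation collapses to (M5) precisely as in your $\k$-anchored version, with the compatibility $\varphi \ci (\a_B \o \a_H) = (\a_H \o \a_B)\ci \varphi$ absorbing the twists. So your proof is correct once the converse verification is stated for general objects rather than only at $\k$.
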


\begin{proof}
$\Rightarrow$: We only need to check (M5) and (M6). For any $a,b \in B$, $h,g \in H$, note that
\begin{eqnarray*}
&& ~~~ \varphi_{\k \o \k} \ci (\ddot{B}_2(\k,\k) \o id_H) \ci \ddot{H}_2(\k \o B,\k \o B)\\
&& = (\ddot{H}_2(\k,\k) \o id_B) \ci \ddot{B}_2(\k \o H,\k \o H) \ci (\varphi_\k \o \varphi_\k).
\end{eqnarray*}
Take the above identity to action at $((1_\k \o \a_B^{-i-n-1}(a)) \o \a_H^{-i-1}(h)) \o ((1_\k \o \a_B^{-j-n-1}(b)) \o \a_H^{-j-1}(g))$, then we immediately get
\begin{eqnarray*}
&& ~~~\sum  \a_H^{i}({\a_H^{-i}(h)}^\varphi)\a_H^{j}({\a_H^{-j}(g)}^\phi) \o \a_B^{n+i}( {a_B^{-i}(a)}^\varphi ) \a_B^{n+j} ( {\a_B^{-j}(b)}^\phi ) \\
&& =\sum  {( hg )}^\varphi \o \a_B^n({(ab)}^\varphi),
\end{eqnarray*}
which implies (M5). Similarly, the equation b) in Lemma 3.1 implies (M6).

$\Leftarrow$:
Since Proposition 4.4, $\ddot{\varphi}(n)$ is a comonad distributive law on $\overline{\mathcal{H}}^{i,j}(Vec_\Bbbk)$.

For any $(X ,\a_X), (Y, \a_Y) \in \overline{\mathcal{H}}^{i,j}(Vec_\Bbbk)$, $x \in X$, $y \in Y$, $a,b \in B$, $h,g \in H$, we compute
\begin{eqnarray*}
&& (\varphi_{X \o Y} \ci (\ddot{B}_2(X,Y) \o id_H) \ci \ddot{H}_2(X \o B,Y \o B)) ( (( x \o a) \o h) \o ((y \o b) \o g) )  \\
&=& \varphi_{X \o Y} ( (x \o y) \o \a_B^i(a)\a_B^j(b) \o \a_H^i(h)\a_H^j(g) ) \\
&=& \sum ( (x \o y) \o {\a_H(\a_H^i(h)\a_H^j(g))}^\varphi ) \o \a_B^n( {\a_B^{-n-1}(\a_B^i(a)\a_B^j(b))}^\varphi )\\
&\stackrel {(M5)}{=}& \sum ( (x \o y) \o {\a_H^{i+1}(h)}^\varphi {\a_H^{j+1}(g)}^\phi ) \o \a_B^n( {\a_B^{i-n-1}(a)}^\varphi {\a_B^{j-n-1}(b)}^\phi )\\
&=& \sum ( (x \o y) \o \a_H^i({\a_H(h)}^\varphi) \a_H^{j}({\a_H(g)}^\phi) ) \o \a_B^{n+i}( {\a_B^{-n-1}(a)}^\varphi ) \a_B^{n+j}({\a_B^{-n-1}(b)}^\phi )\\
&=& \ddot{H}_2(X \o B,Y \o B) (\sum (( x \o {\a_H(h)}^\varphi ) \o (y \o {\a_H(g)}^\phi) ) \\
&&~~~\o \a_B^i(\a_B^n( {\a_B^{-n-1}(a)}^\varphi )) \a_B^j(\a_B^n( {\a_B^{-n-1}(b)}^\phi )) )\\
&=& (\varphi_{X \o Y} \ci (\ddot{B}_2(X,Y) \o id_H) \ci \ddot{H}_2(X \o B,Y \o B) ) ((( x \o a) \o h) \o ((y \o b) \o g)),
\end{eqnarray*}
thus condition a) in Lemma 3.1 holds. Samely, we have condition b). Thus $\ddot{\varphi}(n)$ is a monoidal comonad distributive law.
\end{proof}

\begin{definition}
If the Hom-cotwistor $\varphi:B \o H\rightarrow H \o B$ satisfies Eqs. (M5) - (M6), then we call $\varphi$ a
\emph{monoidal Hom-cotwistor}.
\end{definition}

\begin{theorem}
Assume that $(B,\a_B)$ and $(H,\a_H)$ are two Hom-bialgebras over $\k$, $\varphi:B \o H\rightarrow H \o B$ (with notation $\varphi(b \o h) = \sum h^\varphi \o b^\varphi$) is a $\k$-linear map
satisfying $\varphi \ci (\a_B \o \a_H) = (\a_H \o \a_B) \ci \varphi$. For any $(X, \a_X) \in \overline{\mathcal{H}}^{i,j}(Vec_\Bbbk)$, if we define the natural transformation $\ddot{\varphi}(n)_X: \ddot{H}\ddot{B}(X)\rightarrow \ddot{B}\ddot{H}(X)$ by
\begin{eqnarray*}
\ddot{\varphi}(n)_X: (X \o B) \o H\rightarrow (X \o H) \o B,~~~~(x \o b) \o h\mapsto \sum (x \o {\a_H(h)}^\varphi) \o \a_B^{n}({\a_B^{-n-1}(b)}^\varphi),
\end{eqnarray*}
and define $\Delta_{B \o H}: B \o H\rightarrow (B \o H) \o (B \o H)$, $\varepsilon_{B \o H}: B \o H\rightarrow \k$ by
$$
\Delta_{B \o H}(b \o h) := \sum (b_1 \o {h_1}^\varphi) \o ( {b_2}^\varphi \o h_2 ),
$$
$$
\v_{B \o H}(b \o h) := \v_B(b)\v_H(h), ~~~~\mbox{where}~~b \in B,~~h \in H,
$$
then the following statements are equivalent:

(1) Define the following monoidal structure in $\overline{\mathcal{H}}^{i,j}(Vec_\Bbbk)^{(\ddot{H},\ddot{B})}(\ddot{\varphi}(n))$:
\begin{eqnarray*}
&&\theta^{U \o V}:U \o V\rightarrow (U \o V) \o H, ~~~~u \o v\mapsto (u_{(0)} \o v_{(0)}) \o \a_H^i(u_{(1)})\a_H^j(v_{(1)}),\\
&&\rho^{U \o V}:U \o V\rightarrow (U \o V) \o B, ~~~~u \o v\mapsto (u_{[0]} \o v_{[0]}) \o \a_H^i(u_{[1]})\a_H^j(v_{[1]}),
\end{eqnarray*}
where $U,V$ are all objects in $\overline{\mathcal{H}}^{i,j}(Vec_\Bbbk)^{(\ddot{H},\ddot{B})}(\ddot{\varphi}(n))$, $u \in U$, $v \in V$. The associativity constraint and the unit constraint are same as $\overline{\mathcal{H}}^{i,j}(Vec_\Bbbk)$. Then $\overline{\mathcal{H}}^{i,j}(Vec_\Bbbk)^{(\ddot{H},\ddot{B})}(\ddot{\varphi}(n))$ is a monoidal category.

(2) $\varphi$ is a monoidal Hom-cotwistor;

(3) $\ddot{\varphi}(n): \ddot{H}\ddot{B}\rightarrow \ddot{B}\ddot{H}$ is a monoidal comonad distributive law;

(4) $\ddot{H}\ddot{B} = ( \_ \o B) \o H$ is a bicomonad in $\overline{\mathcal{H}}^{i,j}(Vec_\Bbbk)$;

(5) $B \o H = (B \o H, \a_B \o \a_H,\Delta_{B \o H}, \v_{B \o H}, m_{B \o H}, 1_B \o 1_H)$ is a Hom-bialgebra over $\k$, where $m_{B \o H}( (a \o h)  \o (b \o g)):= ab \o gh$;

(6) $\_ \o (B \o H)$, the right tensor functor of $B \o H$, is a bicomonad in $\overline{\mathcal{H}}^{i,j}(Vec_\Bbbk)$;

(7) Deine the monoidal structure in $Corep^{i,j}(B \o H)$:
\begin{eqnarray*}
\theta^{U \o V}(u \o v):= u_{<0>} \o v_{<0>} \o \a_{B \o H}^i(u_{<1>})\a_{B \o H}^j(v_{<1>}),
\end{eqnarray*}
where $U,V$ are all $B \o H$-Hom-comodules, $u \in U$, $v \in V$, and define the associativity constraint and the unit constraint in $Corep^{i,j}(B \o H)$ be same as $\overline{\mathcal{H}}^{i,j}(Vec_\Bbbk)$, then $Corep^{i,j}(B \o H)$ is a monoidal category. Moreover, for any integer $n$, $\overline{\mathcal{H}}^{i,j}(Vec_\Bbbk)^{(\ddot{H},\ddot{B})}(\ddot{\varphi}(n))$ is monoidal isomorphic to $Corep^{i,j}(B \o H)$.
\end{theorem}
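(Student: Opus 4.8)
The plan is to establish the seven-way equivalence by leveraging the general categorical machinery of Sections 2 and 3 together with the two Hom-bialgebra characterizations already proved in this section (Theorem 4.3 and Proposition 4.8). The strategic observation is that almost every implication here is an instance of a general statement we have already proved, once we identify the right objects. Specifically, $(2)\Leftrightarrow(3)$ is exactly Proposition 4.8; $(3)\Leftrightarrow(4)$ is Lemma 3.3 applied to the bicomonads $\ddot{H}$ and $\ddot{B}$ (these are genuine bicomonads here because $B$ and $H$ are now assumed to be Hom-\emph{bi}algebras, so Theorem 4.3 applies to each); and $(1)\Leftrightarrow(3)$ is precisely Theorem 3.4, since condition (1) spells out the monoidal structure on $\overline{\mathcal{H}}^{i,j}(Vec_\Bbbk)^{(\ddot{H},\ddot{B})}(\ddot{\varphi}(n))$ that Lemma 3.1 describes. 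Thus the categorical core of the theorem is essentially free.

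The genuinely new content is the passage to the smash coproduct as a Hom-bialgebra, namely $(4)\Leftrightarrow(5)$ and $(5)\Leftrightarrow(6)$. For $(5)\Leftrightarrow(6)$ I would invoke Theorem 4.3 directly: the right tensor functor $\_\o(B\o H)$ is a bicomonad on $\overline{\mathcal{H}}^{i,j}(Vec_\Bbbk)$ if and only if $B\o H$ is a Hom-bialgebra, once we check that the comultiplication, counit, multiplication and unit specified in the statement are exactly the structure maps that Lemma \ref{cmd} and Lemma \ref{mdl} assign to $\ddot{B\o H}$. So this reduces to matching formulas. For $(4)\Leftrightarrow(5)$, the plan is to show that the smash coproduct comonad $\ddot{H}\ddot{B}$, as a bicomonad, carries exactly the same monoidal-functor data as $\_\o(B\o H)$: the identification $\ddot{H}\ddot{B}=(\_\o B)\o H$ is naturally isomorphic to $\_\o(B\o H)$, and under this isomorphism the comonad structure from Definition 2.2 transports to the Hom-coalgebra structure $\Delta_{B\o H},\v_{B\o H}$ (this is already the content of Theorem 4.6(3)$\Leftrightarrow$(4)), while the monoidal-functor structure $(\ddot{H}\ddot{B})_2,(\ddot{H}\ddot{B})_0$ computed via Lemma 3.3 transports to $m_{B\o H},1_B\o1_H$. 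Hence being a bicomonad coincides with $B\o H$ being a Hom-bialgebra.

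For statement (7), the plan is to combine Theorem 4.3 applied to the Hom-bialgebra $B\o H$ (which is available precisely under hypothesis (5)) with Proposition 4.7. Theorem 4.3 gives that $Corep^{i,j}(B\o H)$ is monoidal with the stated tensor coaction $\theta^{U\o V}$, and that it is identified with $\overline{\mathcal{H}}^{i,j}(Vec_\Bbbk)^{\ddot{B\o H}}$ as a monoidal category. Proposition 4.7 already furnishes an isomorphism of categories $\overline{\mathcal{H}}^{i,j}(Vec_\Bbbk)^{(\ddot{H},\ddot{B})}(\ddot{\varphi}(n))\cong Corep(B\o H)$ for every $n$; the remaining task is to verify that this isomorphism is \emph{monoidal}, i.e. that the functor $P_n$ of Proposition 4.7 carries the tensor $\theta^{U\o V},\rho^{U\o V}$ of statement (1) to the tensor $\theta^{U\o V}$ of statement (7). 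I expect this to be the main obstacle: it requires unwinding the definitions of $u_{(0)},u_{(1)},u_{[0]},u_{[1]}$ in terms of $u_{<0>},u_{<1>B},u_{<1>H}$ and checking compatibility of the twist powers $\a_H^i,\a_H^j$ against the smash-coproduct comultiplication $\Delta_{B\o H}$, with the factor $\a_B^n$ from $\ddot{\varphi}(n)$ cancelling as in the proof of Proposition 4.7. Everything else is either a cited result or a direct formula-matching computation, so the verification that $P_n$ is strong monoidal is where the real work lies.
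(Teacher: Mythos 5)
Most of your decomposition coincides with the paper's own proof: (1)$\Leftrightarrow$(3) via Theorem 3.5, (2)$\Leftrightarrow$(3) via Proposition 4.8, (3)$\Leftrightarrow$(4) via Lemma 3.4 (your ``Lemma 3.3''), (5)$\Leftrightarrow$(6) via Theorem 4.3, and your plan for (7) --- Theorem 4.3 applied to the Hom-bialgebra $B\o H$ plus Proposition 4.7, followed by a check that $P_n$ is monoidal --- is exactly what the paper does, stated more honestly (the paper leaves the ``Moreover'' part of (7) implicit). The one place where you genuinely depart from the paper is how statement (5) is tied into the chain: the paper proves (2)$\Leftrightarrow$(5) directly, by element computations showing that (M5)--(M6) are precisely the conditions for $\Delta_{B\o H}$ and $\v_{B\o H}$ to be Hom-algebra morphisms, whereas you propose (4)$\Leftrightarrow$(5) by transporting the bicomonad structure along a natural isomorphism $(\_\o B)\o H\cong\_\o(B\o H)$.

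That transport step is where your proposal has a genuine gap: the two structures do \emph{not} correspond under the obvious identifications. Concretely, the counit of the smash coproduct comonad $\ddot H\ddot B$ is $(\varepsilon\epsilon)_X:(x\o b)\o h\mapsto\v_B(b)\v_H(h)\a_X^{-2}(x)$ (each of the two comonad counits contributes one factor $\a_X^{-1}$), while the counit that Lemma 4.1 attaches to the Hom-coalgebra $B\o H$ is $x\o(b\o h)\mapsto\v_B(b)\v_H(h)\a_X^{-1}(x)$; under the vector-space identification these differ by $\a_X^{-1}$, and under the categorical associator $a_{X,B,H}$ they differ by $\a_X^{i+2}$. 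The comultiplications exhibit the same mismatch ($\a_X^2$ versus $\a_X$ on the $X$-leg, and incompatible powers of $\a_B$ on the $B$-legs). Moreover, your stated justification --- that the transport ``is already the content of Theorem 4.6(3)$\Leftrightarrow$(4)'' --- misreads that theorem: it establishes only the logical equivalence of the two properties, by reducing each separately to conditions (M1)--(M4); it never exhibits an isomorphism of comonads. The gap is repairable, but not for free: one can check that the twisted isomorphism $\Theta_X=\a_X^{-1}\o\a_B^{-n-2}\o id_H$ does intertwine both the comonad data and the monoidal-functor data, using that $\varphi$ commutes only with the \emph{joint} twist $\a_B\o\a_H$ (not with $\a_B$ or $\a_H$ separately, which is exactly why the naive identifications fail) and that $\a_B$ is multiplicative; this verification also reveals that the multiplication on $B\o H$ must be $ab\o hg$ rather than the $ab\o gh$ printed in statement (5). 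Finding and verifying such a $\Theta$ is precisely the work your proposal dismisses as ``formula-matching'', and it is comparable in length to the paper's direct proof of (2)$\Leftrightarrow$(5); as written, with an incorrect citation standing in for it, the step fails.
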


\begin{proof}
(1)$\Leftrightarrow$(3): See Theorem 3.5.

(2)$\Leftrightarrow$(3): See Theorem 4.6 and Proposition 4.8.

(3)$\Leftrightarrow$(4): See Lemma 3.4.

(2)$\Rightarrow$(5): Since Theorem 4.6, $B \o H$ is a Hom-coalgebra over $\k$. Obviously $B \o H$ is also a Hom-algebra. We only need to check that $\Delta_{B \o H}$ and $\v_{B \o H}$ are Hom-algebra morphisms.

For any $a,b \in B$, $g,h \in H$, it is a direct computation to check that
\begin{eqnarray*}
&& \Delta_{B \o H}( a \o h )  \Delta_{B \o H}( b \o g )\\
&=& \sum ( (a_1 \o {h_1}^\varphi) \o ( {a_2}^\varphi \o h_2 ) ) ( (b_1 \o {g_1}^\varphi) \o ( {b_2}^\varphi \o g_2 ) )  \\
&=& \sum  ( a_1 b_1 \o {h_1}^\varphi{g_1}^\varphi ) \o ( {a_2}^\varphi{b_2}^\varphi \o h_2 g_2 ) \\
&\stackrel {(M5)}{=}& \sum ( {(a b)}_1 \o {{(hg)}_1}^\varphi ) \o ( {{(ab)}_2}^\varphi \o {(hg)}_2 ) \\
&=& \Delta_{B \o H}(ab \o gh)  ,
\end{eqnarray*}
and
\begin{eqnarray*}
\Delta_{B \o H}( 1_H \o 1_H )&=& \sum 1_B \o {1_H}^\varphi \o {1_B}^\varphi \o 1_H\\
&\stackrel {(M6)}{=}& \sum 1_B \o 1_H \o 1_B \o 1_H,
\end{eqnarray*}
thus $\Delta_{B \o H}$ is a Hom-algebra morphism. Similarly, $\v_{B \o H}$ is also a Hom-algebra morphism.

(5)$\Rightarrow$(2): We only need to prove (M5) and (M6). For any $a,b \in B$, $h,g \in H$, since $B \o H$ is a Hom-bialgebra, we immediately get that
\begin{eqnarray*}
&\sum  ( \a^{-1}_B(a_1) \a^{-1}_B(b_1) \o {\a^{-1}_H(h_1)}^\varphi{\a^{-1}_H(g_1)}^\varphi ) \o ( {\a^{-1}_B(a_2)}^\varphi{\a^{-1}_B(b_2)}^\varphi \o \a^{-1}_H(h_2) \a^{-1}_H(g_2) ) \\
&= \sum ( {(\a^{-1}_B(a) \a^{-1}_B(b))}_1 \o {{(\a^{-1}_H(h) \a^{-1}_H(g))}_1}^\varphi ) \o ( {{(\a^{-1}_B(a) \a^{-1}_B(b))}_2}^\varphi \o {(\a^{-1}_H(h)\a^{-1}_H(g))}_2 ).
\end{eqnarray*}
Use $\v_B \o id_H \o id_B \o \v_H$ to act at the both side of the above identity, we obtain (M5). Similarly one can get (M6).

(5)$\Leftrightarrow$(6): See Theorem 4.3.

(6)$\Leftrightarrow$(7): See Theorem 4.3.
\end{proof}

\section{\textbf{Hom-entwining structures}}
\def\theequation{5.\arabic{equation}}
\setcounter{equation} {0} \hskip\parindent

Suppose that $(A,\a_A)$ is a Hom-algebra over $\k$. Then $(A^{\ast cop}, \a_{A^{\ast cop}})$ is a Hom-coalgebra with the following structures:
\begin{eqnarray*}
&\overline{\v}(f):=f(1_A),~~\overline{\Delta}(f)(xy):=f(\a_A^{-2}(y x)),~~\a_{A^{\ast cop}}(f):=f\ci \a_A^{-1},
\end{eqnarray*}
where $x,y \in A$, $f \in A^{\ast cop}$.

Assume that $(H,\a_H)$ is also a Hom-coalgebra over $\k$. Then we have the following property.

\begin{theorem}
If $\varphi:{A^\ast}^{cop} \o H\rightarrow H \o {A^\ast}^{cop}$ is a $\k$-linear map, then $\varphi$ is a Hom-cotwistor if and only if there is a $\k$-linear map $\Phi:H \o A\rightarrow A \o H$ (with notation $\Phi(h \o a) = \sum a_\Phi \o h^\Phi$), satisfying
$(\a_A \o \a_H) \ci \Phi = \Phi \ci (\a_H \o \a_A)$ and the following identities:
\\
$\left\{\begin{array}{l}
(E1)~ \sum a_\Phi b_\Psi \o \a_H({h}^{\Phi\Psi}) = \sum{(ab)}_\Phi \o {\a_H(h)}^\Phi;\\
(E2)~ \sum {\a_A(a)}_{\Psi\Phi} \o {h_1}^\Phi \o {h_2}^\Psi = \sum \a_A(a_\Phi) \o {h^\Phi}_1 \o {h^\Phi}_2;\\
(E3)~ \sum \v_H(h^\Phi) a_\Phi = \v_H(h) a;\\
(E4)~ \sum {(1_A)}_\Phi \o h^\Phi = 1_A \o h,
\end{array}\right.$\\
where $\Phi = \Psi$, $a,b \in A$, $h \in H$.
\end{theorem}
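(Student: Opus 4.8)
The plan is to use the finite-dimensionality of $A$ to set up a linear duality between maps $\varphi:A^{\ast cop}\o H\to H\o A^{\ast cop}$ and maps $\Phi:H\o A\to A\o H$, and then to match the Hom-cotwistor axioms (M1)--(M4) against (E1)--(E4) one at a time. Writing $\varphi(f\o h)=\sum h^\varphi\o f^\varphi$ and $\Phi(h\o a)=\sum a_\Phi\o h^\Phi$, I would declare $\varphi$ and $\Phi$ to be associated when
$$
\sum h^\varphi\, f^\varphi(a)=\sum f(a_\Phi)\, h^\Phi \qquad(f\in A^{\ast cop},\ a\in A,\ h\in H),
$$
an identity in $H$. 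Choosing a basis $\{e_i\}$ of $A$ with dual basis $\{e^i\}$ shows that this relation recovers each of $\varphi,\Phi$ uniquely from the other, so $\Phi\mapsto\varphi$ is a bijection between $\k$-linear maps of the two types. This reduces the theorem to verifying that, under this bijection, the $\a$-compatibility conditions correspond and that (Mi) holds if and only if (Ei) holds for $i=1,2,3,4$.

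Next I would translate the coalgebra data of $B=A^{\ast cop}$ into algebra data of $A$. From $\a_{A^{\ast cop}}(f)=f\ci\a_A^{-1}$ one gets $\<\a_B f,a\>=\<f,\a_A^{-1}(a)\>$, whence $\varphi\ci(\a_B\o\a_H)=(\a_H\o\a_B)\ci\varphi$ is equivalent to $(\a_A\o\a_H)\ci\Phi=\Phi\ci(\a_H\o\a_A)$. From $\overline{\v}(f)=f(1_A)$ the counit of $B$ is dual to the unit of $A$, and from $\overline{\Delta}(f)(x\o y)=f(\a_A^{-2}(yx))$ the comultiplication of $B$ is dual to the twisted opposite multiplication of $A$. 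Pairing the free $A^{\ast cop}$-slots of each axiom against suitable elements of $A$ then converts every coalgebra operation on $B$ into the matching algebra operation on $A$.

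With these dictionaries in hand I would carry out the four translations. Pairing the two $B$-factors of (M1) against $a,b\in A$ and using that $\overline{\Delta}$ is dual to multiplication yields (E1); pairing the single $B$-factor of (M2) against $a\in A$ while leaving the two $H$-factors untouched yields (E2); applying $\v_H$ to (M3) and pairing the $B$-output against $a$ gives (E3); and evaluating $\overline{\v}=\<-,1_A\>$ in (M4), then using that $A^\ast$ separates points of $A$, gives (E4). Running each computation backwards gives the converse implications, completing the equivalence.

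The only genuinely delicate point, and the one I expect to be the main obstacle, is the bookkeeping of the $\a$-twists and the order reversal built into $\overline{\Delta}(f)(x\o y)=f(\a_A^{-2}(yx))$. One must check that the reversed product $yx$, together with the powers of $\a_A$ coming from $\overline{\Delta}$ and from $\a_B=\a_{A^{\ast cop}}$ and the $\a_H$'s already present in (M1)--(M2), recombine to give exactly the twists $\a_H(h^{\Phi\Psi})$, $\a_H(h)^\Phi$, $\a_A(a)_{\Psi\Phi}$ and $\a_A(a_\Phi)$ displayed in (E1)--(E2), rather than nearby variants; this is controlled by inserting the correct powers of $\a_A$ into the elements one pairs against. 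Once these twist computations are pinned down, each equivalence is immediate.
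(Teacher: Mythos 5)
Your proposal follows essentially the same route as the paper's proof: the paper also sets up the bijection $\varphi\leftrightarrow\Phi$ via dual bases $\{e_i\},\{e^i\}$ (its formulas $\Phi(h\o a)=\sum {e^i}^\varphi(a)e_i\o h^\varphi$ and $\varphi(f\o h)=\sum f({e_i}_\Phi)h^\Phi\o e^i$ are exactly your pairing relation $\sum h^\varphi f^\varphi(a)=\sum f(a_\Phi)h^\Phi$), and then matches (Mi) with (Ei) one at a time by evaluating against suitably $\a_A$-twisted elements of $A$ and using the dual structures $\overline{\v}(f)=f(1_A)$, $\overline{\Delta}(f)(xy)=f(\a_A^{-2}(yx))$, $\a_{A^{\ast cop}}(f)=f\ci\a_A^{-1}$. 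The twist bookkeeping you flag as the delicate point is precisely what the paper's displayed computations carry out (e.g.\ pairing the two legs of (M1) against $\a_A(b)$ and $\a_A(a)$), so your plan is correct and coincides with the paper's argument.
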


\begin{proof}
Let $\Pi = \{ \Phi:H \o A\rightarrow A \o H \mbox{~is~a~linear~map}\mid \Phi \mbox{~satisfies~Eqs.(E1)-(E4),~and~}(\a_A \o \a_H) \ci \Phi = \Phi \ci (\a_H \o \a_A) \}$, and $\Xi$ denote the collection of the Hom-cotwistors $\varphi:{A^\ast}^{cop} \o H\rightarrow H \o {A^\ast}^{cop}$.

For any $\varphi \in \Xi$, define a map $\varrho: \Xi\rightarrow \Pi$ by $\varrho(\varphi) = \Phi$, where $\Phi$ is defined as follows
$$
\Phi(h \o a) = \sum a_\Phi \o h^\Phi := \sum {e^i}^\varphi(a)e_i \o h^\varphi,
$$
where $a \in A$, $h \in H$, $e_i$ and $e^i$ are dual bases of $A$ and $A^\ast$ respectively.

For any $\Phi \in \Pi$, define a map $\sigma:\Pi\rightarrow \Xi$ by $\sigma(\Phi) = \varphi$, where $\varphi$ is defined as follows
$$
\varphi(f \o h) = \sum h^\varphi \o f^\varphi := \sum f({e_i}_\Phi) h^\Phi \o e^i,
$$
where $f \in A^\ast$, $h \in H$, $e_i$ and $e^i$ are dual bases of $A$ and $A^\ast$ respectively.

Obviously $\varrho$ is the inverse of $\sigma$. From now on, assume that $\varphi \in \Xi$ and $\Phi \in \Pi$ are in correspondence with each other.

(1) If $\varphi \ci (\a_{{A^\ast}^{cop}} \o \a_H) = (\a_H \o \a_{{A^\ast}^{cop}}) \ci \varphi$, then for any $f\in A^\ast$, we have
\begin{eqnarray*}
\sum f(\a_A(a_\Phi)) \o \a_H(h^\Phi) &=& \sum {e^i}^\varphi(a)f(\a_A(e_i)) \o \a_H(h^\varphi)\\
&=& \sum {\a^{-1}_{{A^\ast}^{cop}}(f)}^\varphi (a) \o \a_H(h^\varphi)\\
&=& \sum \a^{-1}_{{A^\ast}^{cop}}(f^\varphi)(a) \o {\a_H(h)}^\varphi\\
&=& \sum {e^i}^\varphi(\a_A(a))f(e_i) \o {\a_H(h)}^\varphi = \sum f({\a_A(a)}_\Phi) \o \a_H(h)^\Phi,
\end{eqnarray*}
which implies $(\a_A \o \a_H) \ci \Phi = \Phi \ci (\a_H \o \a_A)$.

Conversely, if $(\a_A \o \a_H) \ci \Phi = \Phi \ci (\a_H \o \a_A)$, then for any $a \in A$, we compute
\begin{eqnarray*}
\sum \a_H(h^\varphi) \o \a_{{A^\ast}^{cop}}(f^\varphi) &=& \sum f({e_i}_\Phi) \a_H(h^\Phi) \o \a_{{A^\ast}^{cop}}(e^i) \\
&=& \sum f({\a_A^{-1}(e_i)}_\Phi) \a_H(h^\Phi) \o \ e^i \\
&=& \sum f(\a_A^{-1}({e_i}_\Phi)) {\a_H(h)}^\Phi \o \ e^i \\
&=& \sum \a_{{A^\ast}^{cop}}(f)({e_i}_\Phi){\a_H(h)}^\Phi \o \ e^i = \sum {\a_H(h)}^\varphi \o {\a_{{A^\ast}^{cop}}(f)}^\varphi.
\end{eqnarray*}

(2) Moreover, if (M1) holds, then for any $f\in A^\ast$, $a,b \in A$, $h \in H$, we immediately get
\begin{eqnarray*}
\sum {\a_H(h)}^{\varphi\phi} \o {f_1}^\phi(\a_A(b)) \o {f_2}^\varphi(\a_A(a)) = \sum  \a_H(h^\varphi) \o {f^\varphi}_1(\a_A(b)) \o {f^\varphi}_2(\a_A(a)).
\end{eqnarray*}

For one thing, we have
\begin{eqnarray*}
&&~~~\sum {\a_H(h)}^{\varphi\phi} \o {f_1}^\phi(\a_A(b)) \o {f_2}^\varphi(\a_A(a)) \\
&&=\sum {\a_H(h)}^{\Phi\Psi} \o f( \a^{-2}_A ({\a_A(a)}_\Phi) \a^{-2}_A ({\a_A(b)}_\Psi) ) \o 1_\k\\
&&= \sum \a_H(h^{\Phi\Psi}) \o f( \a^{-1}_A (a_\Phi) \a^{-1}_A (b_\Psi) ) \o 1_\k\\
&&= \sum \a_H(h^{\Phi\Psi}) \o \a_{{A^\ast}^{cop}}(f)(a_\Phi b_\Psi) \o 1_\k.
\end{eqnarray*}

For another, we have
\begin{eqnarray*}
\sum \a_H(h^\varphi) \o {f^\varphi}_1(\a_A(b)) \o {f^\varphi}_2(\a_A(a)) &=& \sum \a_H(h^\Phi) \o f( {\a^{-1}_A(ab)}_\Phi ) \o 1_\k\\
&=& \sum {\a_H(h)}^\Phi \o f( \a^{-1}_A({(ab)}_\Phi) ) \o 1_\k\\
&=& \sum \a_H(h)^\Phi \o \a_{{A^\ast}^{cop}}(f)( {(ab)}_\Phi ) \o 1_\k.
\end{eqnarray*}
Thus Eq.(E1) holds.

Conversely, if (E1) holds, we can show Eq.(M1).

(3) Be similar with (2), Eq.(E2) and (M2) can be deduced from each other.

(4) If (M3) holds, then for any $f\in A^\ast$, $a \in A$, $h \in H$, we have
\begin{eqnarray*}
\sum \v_H(h^\varphi) f^\varphi(a) = \v_H(h) f(a),
\end{eqnarray*}
which implies
\begin{eqnarray*}
\sum \v_H(h^\Phi) f(a_\Phi) = \v_H(h) f(a),
\end{eqnarray*}
thus (E3) holds.

Conversely, (E3) also implies (M3).

(5) Be similar with (4), Eq.(E4) and (M4) can be deduced from each other.
\end{proof}

\begin{definition}
Suppose that $(A,\a_A)$ is a Hom-algebra, $(H,\a_H)$ is a Hom-coalgebra over $\k$. If there is a $\k$-linear map $\Phi:H \o A\rightarrow A \o H$ (with notation $\Phi(h \o a) = \sum a_\Phi \o h^\Phi$), satisfying
$(\a_A \o \a_H) \ci \Phi = \Phi \ci (\a_H \o \a_A)$ and Eqs.(E1)-(E4), then we call $\Phi$ a \emph{Hom-entwining map}, and call the triple $(H,A,\Phi)$ a \emph{(right-right) Hom-entwining structure over $\k$}.
\end{definition}

\begin{definition}
Assume that $n$ is an integer in $\mathbb{Z}$, $(H,A,\Phi)$ a (right-right) Hom-entwining structure, $(U,\a_U)$ is both a right $A$-Hom-module and a right $H$-Hom-comodule. If the following
identity
\begin{eqnarray}
\rho^U(u \c a) = \sum u_{(0)} \c\a_A({a}_\Phi) \o \a_H^{-n}({\a_H^{n+1}(u_{(1)})}^\Phi)
\end{eqnarray}
holds for all $a \in A$, $h \in H$, $u \in U$, then we call $(U,\a_U)$ an \emph{$n$-th Hom-entwined module} or an \emph{$n$-th $\Phi$-module}. The category of $n$-th Hom-entwined modules and $A$-linear $H$-colinear maps is denoted by $\mathcal{M}(\Phi)^H_A(n)$.
\end{definition}

\begin{example}
(1) If $(H,A,\Phi)$ is a Hom-entwining structure over $\k$, then $H\o A$ is an $n$-th Hom-entwined module under the following structures
\begin{eqnarray*}
&\rho^{H \o A}(h \o a) = \sum (h_1 \o \a_A^{n+1}({\a_A^{-n}(a)}_\Phi)) \o {h_2}^\Phi,\\
&(h \o a) \c x = \a_H(h) \o a\a_A^{-1}(x),
\end{eqnarray*}
where $a,x \in A$, $h \in H$.

\begin{proof}
It is easy to get that $H \o A$ is both an $A$-Hom-module and an $H$-Hom-comodule. We only check Eq. (5.1).
For any $a,x \in A$, $h \in H$, we compute as follows:
\begin{eqnarray*}
\rho^{H \o A}((h \o a) \c x) &=& \rho^{H \o A} (\a_H(h) \o a\a_A^{-1}(x))\\
&=& \sum (\a_H(h_1) \o \a_A^{n+1}({(\a_A^{-n}(a)\a_A^{-n-1}(x))}_\Phi)) \o {\a_H(h_2)}^\Phi\\
&\stackrel {E1}{=}& \sum (\a_H(h_1) \o \a_A^{n+1}({\a_A^{-n}(a)}_\Phi) \a_A^{n+1}({\a_A^{-n-1}(x)}_\Psi) ) \o \a_H({h_2}^{\Phi\Psi})\\
&=& \sum (\a_H(h_1) \o \a_A^{n+1}({\a_A^{-n}(a)}_\Phi) {x}_\Psi ) \o \a_H^{-n}(\a_H^{n+1}({h_2}^{\Phi})^\Psi) \\
&=& \sum (h_1 \o \a_A^{n+1}({\a_A^{-n}(a)}_\Phi) ) \c \a_A(x_\Psi)\o \a_H^{-n}(\a_H^{n+1}({h_2}^{\Phi})^\Psi) \\
&=& \sum (h \o a)_{(0)} \c \a_A(x_\Psi) \o \a_H^{-n}(\a_H^{n+1}({(h \o a)}_{(1)})^\Psi).
\end{eqnarray*}
Thus $H \o A$ is an object in $\mathcal{M}(\Phi)^H_A(n)$.
\end{proof}

(2) Similarly, if $(H,A,\Phi)$ is a Hom-entwining structure over $\k$, then $A\o H$ is an $n$-th Hom-entwined module under the following structures
\begin{eqnarray*}
&\rho^{A \o H}(a \o h) = (\a_A(a) \o h_1) \o \a_H^{-n}(h_2),\\
&(a \o h) \c x = \sum a ({\a_A(x)}_\Phi) \o {\a_H(h)}^\Phi,
\end{eqnarray*}
where $a,x \in A$, $h \in H$.

(3) If all the Hom-structure map $\a=id$, then the Hom-entwining structure is the usual entwining structure, and the $n$-th Hom-entwined modules are usual entwined modules.

(4) Assume that $(H,\a_H)$ is a Hom-bialgebra over $\k$, $n$ is an integer. If we define $\Phi$ by
\begin{equation*}
\begin{split}
\Phi: H\otimes H  &\longrightarrow H\otimes H \\
h\otimes g &\longmapsto \a_H^{-1}(g_{1}) \otimes \a_H^{-1}(h) \a_H^{n}(g_{2}),
\end{split}
\end{equation*}
where $h,g \in H$, then it is a direct computation to check that $(H,H,\Phi)$ is a Hom-entwining structure.
Further, $(U,\a_U)$ is an object in $\mathcal{M}(\Phi)^H_H(n)$ means that
$$
\rho(u \c h) = u_{[0]}\c h_{1} \o u_{[1]} h_{2},~~u \in U,~~h \in H,
$$
which implies the $n$-th Hom-entwined modules are actually Hom-Hopf modules over $H$.

(5) If we take $(A,\a_A)=(\k,id_\k)$, and $\Phi:H \o \k \rightarrow \k \o H$ is defined as $id_H$, then it is easy to get that $\Phi$ is a Hom-entwining map. Further,
note that if $(U,\a_U,\leftharpoonup)$ is a right $\k$-Hom-module, then we have
$$
u \leftharpoonup \lambda = \lambda \a_U(u)
$$
for any $u \in U$ and $\lambda \in \k$. Thus for any $n \in \mathbb{Z}$, $(U,\a_U)$ is an object in $\mathcal{M}(\Phi)^H_\k(n)$ means that
$$
\rho(u \c \lambda) = u_{[0]}\c \lambda \o \a_H(u_{[1]}),
$$
which implies $\mathcal{M}(\Phi)^H_\k(n) = \mathcal{M}^H$.

(6) Similarly, if we take $(H,\a_H)=(\k,id_\k)$, and $\Phi: \k \o A \rightarrow A \o \k $ is defined as $id_A$, then it is easy to get that $\Phi$ is a Hom-entwining map, and $\mathcal{M}(\Phi)^\k_A(n)$ is identified to $\mathcal{M}_A$ for any $n \in \mathbb{Z}$.

\end{example}

Recall from Theorem 5.1, if $\Phi:H \o A\rightarrow A \o H$ is a Hom-entwining map, then there is a Hom-cotwistor $\varphi:{A^\ast}^{cop} \o H\rightarrow H \o {A^\ast}^{cop}$. Thus for $n \in \mathbb{Z}$, we can get a comonad distributive law $\ddot{\varphi}(n): \ddot{H}\ci {\ddot{A}}^{\ast cop}\rightarrow {\ddot{A}}^{\ast cop} \ci \ddot{H}$.

\begin{proposition}
$\mathcal{M}(\Phi)^H_A(n)$ and $\overline{\mathcal{H}}^{i,j}(Vec_\Bbbk)^{(\ddot{H},{\ddot{A}}^{\ast cop})}(\ddot{\varphi}(n))$ are isomorphic.
\end{proposition}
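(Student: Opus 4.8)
The plan is to exhibit a pair of mutually inverse functors between the two categories that fix the underlying object $(U,\a_U)$ together with its right $H$-Hom-comodule structure, and that translate only the data on the $A$-side. The key observation is that, since $A$ is finite dimensional, a right $A$-Hom-module structure on $(U,\a_U)$ is the same datum as a right ${A^\ast}^{cop}$-Hom-comodule structure on $(U,\a_U)$. Indeed, an object of $\overline{\mathcal{H}}^{i,j}(Vec_\Bbbk)^{(\ddot{H},{\ddot{A}}^{\ast cop})}(\ddot{\varphi}(n))$ is exactly a space that is simultaneously a right $H$-Hom-comodule and a right ${A^\ast}^{cop}$-Hom-comodule subject to the $\ddot{\varphi}(n)$-bicomodule axiom, while an object of $\mathcal{M}(\Phi)^H_A(n)$ is a right $H$-Hom-comodule and a right $A$-Hom-module subject to (5.1). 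Hence the whole problem reduces to (i) making the module/comodule dictionary precise and (ii) matching the two compatibility conditions; the construction will parallel the functors $P_n$, $Q_n$ of Proposition 4.7, with the duality between $A$-modules and $B={A^\ast}^{cop}$-comodules inserted on the $A$-side.

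First I would set up the dictionary. Using dual bases $\{e_i\}$, $\{e^i\}$ of $A$ and $A^\ast$, I would define from a right action $u\c a$ the $B$-coaction $\rho^U(u)=u_{[0]}\o u_{[1]}$ by an expression of the form $\sum_i(\text{twist of }u\c e_i)\o e^i$, with the precise powers of $\a$ chosen exactly as in the formulas for $\varrho$ and $\sigma$ in Theorem 5.1, so that the $n$-twisted coaction $\a_B^n(\a_B^{-n-1}(\c)^\varphi)$ occurring in the bicomodule axiom lands in the right slot; conversely the action is recovered by pairing $u_{[1]}$ against $f\in A^\ast$. That this interchange carries Hom-modules to Hom-comodules and back is a routine check using the $cop$ structure $\overline{\Delta}(f)(xy)=f(\a_A^{-2}(yx))$, $\overline{\v}(f)=f(1_A)$, $\a_{{A^\ast}^{cop}}=(\c)\ci\a_A^{-1}$, together with the module axioms. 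In particular, the reversal of order in $\overline{\Delta}$ is precisely what makes a right $A$-module correspond to a right ${A^\ast}^{cop}$-comodule.

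The heart of the argument, and the step I expect to be the main obstacle, is to show that (5.1) is equivalent, under this dictionary, to the $\ddot{\varphi}(n)$-bicomodule identity
$$
{u_{[0]}}_{(0)} \o {u_{[0]}}_{(1)} \o u_{[1]} = {u_{(0)}}_{[0]} \o {\a_H(u_{(1)})}^\varphi \o \a_B^n({\a_B^{-n-1}({u_{(0)}}_{[1]})}^\varphi).
$$
I would pair both sides against an arbitrary $f\in A^\ast$ in the $B$-slot, use the explicit link between $\Phi$ and $\varphi$ from Theorem 5.1, namely $\Phi(h\o a)=\sum {e^i}^\varphi(a)e_i\o h^\varphi$ and $\varphi(f\o h)=\sum f({e_i}_\Phi)h^\Phi\o e^i$, to rewrite the $\varphi$-twists as $\Phi$-twists, and then convert the remaining $B$-coactions back into $A$-actions. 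After relocating the $\a$-exponents by means of the bijective Hom-structure maps, the resulting identity should be exactly (5.1). The delicate bookkeeping lies in tracking the $n$-dependent twist $\a_B^n(\a_B^{-n-1}(\c)^\varphi)$ against the module-side twist $\a_H^{-n}(\a_H^{n+1}(\c)^\Phi)$ through the duality, which is where the invertibility of $\a$ is used repeatedly.

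Finally, on morphisms both functors act as the identity on underlying maps, and under the dictionary an $A$-linear $H$-colinear map is the same as a $B$-colinear $H$-colinear map. Consequently the two assignments are functorial and visibly mutually inverse, which yields the claimed isomorphism of categories $\mathcal{M}(\Phi)^H_A(n)\cong\overline{\mathcal{H}}^{i,j}(Vec_\Bbbk)^{(\ddot{H},{\ddot{A}}^{\ast cop})}(\ddot{\varphi}(n))$.
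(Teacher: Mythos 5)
Your plan coincides with the paper's own proof: the paper defines $R_n$ fixing $(U,\a_U)$ and its $H$-coaction, with ${A^\ast}^{cop}$-coaction $u\mapsto\sum u\c e_i\o e^i$, inverse functor $T_n$ recovering the action by $u\c a:=u_{[1]}(a)\,u_{[0]}$, and verifies the $\ddot{\varphi}(n)$-bicomodule identity by evaluating the ${A^\ast}^{cop}$-leg at an element $a\in A$ and invoking the $\Phi$--$\varphi$ dictionary of Theorem 5.1 together with condition (5.1), exactly as you outline. The only corrections to your sketch are cosmetic: the coaction requires no $\a$-twist at all, and the reduction of the tensor identity is carried out by evaluating the $B$-slot at $a\in A$ rather than pairing it against $f\in A^\ast$.
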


\begin{proof}
We define a functor $R_n: \mathcal{M}(\Phi)^H_A(n) \rightarrow \overline{\mathcal{H}}^{i,j}(Vec_\Bbbk)^{(\ddot{H},{\ddot{A}}^{\ast cop})}(\ddot{\varphi}(n))$ as follows:

$\bullet$ for any object $(U,\a_U) \in \mathcal{M}(\Phi)^H_A(n)$ (suppose that the $H$-coaction is given by $u\mapsto u_{(0)} \o u_{(1)}$, and the $A$-action is given by $u \o a \mapsto u \c a$),  $R_n(U):=U$ as a $\k$-module, and the $\ddot{H}$-comodule structure is also defined by $u\mapsto u_{(0)} \o u_{(1)}$, the ${\ddot{A}}^{\ast cop}$-comodule structure is defined by
$$
u\mapsto u_{[0]} \o u_{[1]} := \sum m \c e_i \o e^i,
$$
where $ u \in U$, $e_i$ and $e^i$ are dual bases of $A$ and $A^\ast$ respectively;

$\bullet$ for any morphism $\varpi: (U,\a_U)\rightarrow (V,\a_V)$, $R_n(\varpi):=\varpi$.

Obviously $U$ is an ${\ddot{A}}^{\ast cop}$-comodule under the above structure. Then we compute
\begin{eqnarray*}
&&~~~~ \sum {u_{(0)}}_{[0]} \o {\a_H(u_{(1)})}^\varphi \o \a_{{A^\ast}^{cop}}^n({\a_{{A^\ast}^{cop}}^{-n-1} ({u_{(0)}}_{[1]})}^\varphi )(a) \\
&&= \sum {u_{(0)}}_{[0]} \o {\a_H(u_{(1)})}^\varphi \o ({\a_{{A^\ast}^{cop}}^{-n-1} ({u_{(0)}}_{[1]})}^\varphi )(\a_A^{-n}(a)) \\
&&= \sum {u_{(0)}}_{[0]} \o {u_{(0)}}_{[1]} (\a_A^{n+1} ({\a_A^{-n}(a)}_\Phi) ) {\a_H(u_{(1)})}^\Phi \o 1_\k \\
&&= \sum u_{(0)} \c \a_A^{n+1} ({\a_A^{-n}(a)}_\Phi) \o {\a_H(u_{(1)})}^\Phi \o 1_\k \\
&&= \sum u_{(0)} \c \a_A(a_\Phi) \o \a_A^{-n} ({\a_H^{n+1}(u_{(1)})}^\Phi) \o 1_\k \\
&&= \sum (u \c a)_{(0)} \o (u \c a)_{(1)} \o 1_\k  = {u_{[0]}}_{(0)} \o {u_{[0]}}_{(1)} \o u_{[1]} (a),
\end{eqnarray*}
hence $R_n$ is well defined.

Conversely, define a functor $T_n: \overline{\mathcal{H}}^{i,j}(Vec_\Bbbk)^{(\ddot{H},{\ddot{A}}^{\ast cop})}(\ddot{\varphi}(n)) \rightarrow \mathcal{M}(\Phi)^H_A(n)$ as follows:

$\bullet$ for any object $(U,\a_U) \in \overline{\mathcal{H}}^{i,j}(Vec_\Bbbk)^{(\ddot{H},{\ddot{A}}^{\ast cop})}(\ddot{\varphi}(n))$ (suppose that the $\ddot{H}$-coaction is given by $u\mapsto u_{(0)} \o u_{(1)}$, and the ${\ddot{A}}^{\ast cop}$-coaction is given by $u \mapsto u_{[0]} \o u_{[1]}$),
$T_n(U):=U$ as a $\k$-module, and the $H$-Hom-comodule structure is also defined by $u\mapsto u_{(0)} \o u_{(1)}$, the $A$-Hom-module structure is defined by
$$
u \c a\mapsto u_{[1]}(a) u_{[0]}, ~~~~\mbox{where}~u \in U, ~a \in A;
$$

$\bullet$ for any morphism $\varpi: (U,\a_U)\rightarrow (V,\a_V)$, $R_n(\varpi):=\varpi$.

It is a straightforward computation to check that $T_n$ is well defined and $T_n$ is the inverse of $R_n$. This completes the proof.
\end{proof}

\begin{corollary}
If $\Phi:H \o A\rightarrow A \o H$ is a Hom-entwining map, then ${A^\ast}^{cop} \o H$ has a Hom-coalgebra structure over $\k$. Further, for any $n \in \mathbb{Z}$, $\mathcal{M}(\Phi)^H_A(n)$ is isomorphic to the category $Corep({A^\ast}^{cop} \o H)$.
\end{corollary}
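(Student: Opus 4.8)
The plan is to read off the result by chaining the correspondence of Theorem 5.1 with the structural and categorical equivalences of Section 4, applied with $B = {A^\ast}^{cop}$, together with Proposition 5.5. No new computation is needed; everything reduces to assembling previously established statements.

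First I would settle the Hom-coalgebra claim. Since $\Phi:H \o A \rightarrow A \o H$ is a Hom-entwining map, Theorem 5.1 produces a corresponding $\k$-linear map $\varphi = \sigma(\Phi):{A^\ast}^{cop} \o H \rightarrow H \o {A^\ast}^{cop}$ which is a Hom-cotwistor. Taking $B = {A^\ast}^{cop}$ in Theorem 4.6, the implication (1)$\Rightarrow$(4) shows that the smash coproduct $B \o H = {A^\ast}^{cop} \o H$, equipped with the comultiplication $\D_{B \o H}$ and counit $\v_{B \o H}$ defined before Theorem 4.6, is a Hom-coalgebra over $\k$.

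Next I would obtain the categorical isomorphism by composing two isomorphisms that are already available. Proposition 5.5 gives $\mathcal{M}(\Phi)^H_A(n) \cong \overline{\mathcal{H}}^{i,j}(Vec_\Bbbk)^{(\ddot{H},{\ddot{A}}^{\ast cop})}(\ddot{\varphi}(n))$, where $\ddot{\varphi}(n)$ is the comonad distributive law induced by $\varphi$. On the other hand, Proposition 4.7 with $B = {A^\ast}^{cop}$ gives $\overline{\mathcal{H}}^{i,j}(Vec_\Bbbk)^{(\ddot{H},{\ddot{A}}^{\ast cop})}(\ddot{\varphi}(n)) \cong Corep({A^\ast}^{cop} \o H)$. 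Composing these for each fixed $n$ yields the desired isomorphism $\mathcal{M}(\Phi)^H_A(n) \cong Corep({A^\ast}^{cop} \o H)$, valid for every $n \in \mathbb{Z}$.

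There is no substantive obstacle here, since the argument is purely a matter of bookkeeping with equivalences already proved. The only point needing care is to check that the Hom-cotwistor $\varphi = \sigma(\Phi)$ furnished by Theorem 5.1 is precisely the one whose induced distributive law $\ddot{\varphi}(n)$ is used in both Proposition 4.7 and Proposition 5.5. This is immediate from the definitions, because each of those propositions is formulated in terms of the same $\ddot{\varphi}(n)$ attached to $B = {A^\ast}^{cop}$; hence the two isomorphisms share the intermediate category $\overline{\mathcal{H}}^{i,j}(Vec_\Bbbk)^{(\ddot{H},{\ddot{A}}^{\ast cop})}(\ddot{\varphi}(n))$ and are composable.
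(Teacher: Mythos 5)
Your proposal is correct and is essentially identical to the paper's own proof: the paper likewise obtains the Hom-coalgebra structure by passing from $\Phi$ to the Hom-cotwistor $\varphi$ via Theorem 5.1 and invoking Theorem 4.6, and then gets the categorical isomorphism by combining Proposition 5.5 with Proposition 4.7 applied to $B = {A^\ast}^{cop}$. The extra remark about the two isomorphisms sharing the same intermediate category $\overline{\mathcal{H}}^{i,j}(Vec_\Bbbk)^{(\ddot{H},{\ddot{A}}^{\ast cop})}(\ddot{\varphi}(n))$ is a reasonable point of care that the paper leaves implicit.
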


\begin{proof}
Since Theorem 5.1, a Hom-entwining map $\Phi$ is in correspondence with a Hom-cotwistor $\varphi$. Then from Theorem 4.6, there is a Hom-coalgebra structure
over ${A^\ast}^{cop} \o H$ which is induced by $\varphi$.

Moreover, combining Proposition 4.7 and Proposition 5.5, we immediately get that $\mathcal{M}(\Phi)^H_A(n) \cong Corep({A^\ast}^{cop} \o H)$.
\end{proof}

\begin{definition}
Suppose that $H$, $A$ are all Hom-bialgebas over $\k$, and $\Phi:H \o A\rightarrow A \o H$ is a Hom-entwining map.
If $\Phi$ satisfies\\
$\left\{\begin{array}{l}
(E5)~ \sum {a_{1}}_\Phi \o {a_{2}}_\Psi \o {\a_H^2(h)}^{\Phi}{\a_H^2(g)}^\Psi = \sum {a_\Phi}_1 \o {a_\Phi}_2 \o \a_H^2({(hg)}^\Phi) ;\\
(E6)~ \sum \v_A(a_\Phi) {(1_H)}^\Phi = \v_A(a)1_H,
\end{array}\right.$\\
for any $h,g \in H$, $a \in A$, then the triple $(H,A,\Phi)$ is called a \emph{Hom-monoidal entwining datum}.
\end{definition}

Note that if $(A,\a_A)$ is a Hom-bialgebra over $\k$, then $(A^{\ast cop}, \a_{A^{\ast cop}})$ is also a Hom-bialgebra under the following structures:
\begin{eqnarray*}
&1_{A^{\ast cop}}:=\v,~~(f\ast g)(y):=f(\a_A^{-2}(y_1))g(\a_A^{-2}(y_2)),\\
&\overline{\v}(f):=f(1_A),~~\overline{\Delta}(f)(xy):=f(\a_A^{-2}(y x)),~~\a_{A^{\ast cop}}(f):=f\ci \a_A^{-1},
\end{eqnarray*}
where $x,y \in A$, $f \in A^{\ast cop}$.

\begin{proposition}
Suppose that $H$, $A$ are all Hom-bialgebas. Then there is a $\k$-linear map $\Phi: H\o A\rightarrow A \o H$ such that $(H,A,\Phi)$ is a Hom-monoidal entwining datum if and only if there is a
$\k$-linear map $\varphi: {A^\ast}^{cop} \o H\rightarrow H \o {A^\ast}^{cop}$ such that $\varphi$ is a monoidal Hom-cotwistor.
\end{proposition}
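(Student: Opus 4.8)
The plan is to reduce everything to the correspondence already established in Theorem 5.1. That theorem gives mutually inverse bijections $\varrho:\Xi\to\Pi$ and $\sigma:\Pi\to\Xi$ between Hom-cotwistors $\varphi:{A^\ast}^{cop}\o H\to H\o{A^\ast}^{cop}$ and Hom-entwining maps $\Phi:H\o A\to A\o H$, where $\Phi$ and $\varphi$ correspond exactly when $\Phi(h\o a)=\sum {e^i}^\varphi(a)\,e_i\o h^\varphi$, equivalently $\varphi(f\o h)=\sum f({e_i}_\Phi)\,h^\Phi\o e^i$. By Definition 4.9 a monoidal Hom-cotwistor is precisely a Hom-cotwistor satisfying (M5)--(M6), and by Definition 5.7 a Hom-monoidal entwining datum is precisely a Hom-entwining structure satisfying (E5)--(E6). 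Hence, fixing a correspondent pair $\Phi\leftrightarrow\varphi$, it suffices to prove that (M5) is equivalent to (E5) and that (M6) is equivalent to (E6); the bijection of Theorem 5.1 then transports one class of data onto the other.

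The workhorse is the evaluation dictionary
\[
\sum h^\varphi\, f^\varphi(a)=\sum f(a_\Phi)\,h^\Phi,\qquad f\in A^\ast,\ h\in H,\ a\in A,
\]
which follows at once from the defining formula for $\sigma$, the dual-basis identity $\sum_i e^i(a)\,{e_i}_\Phi=a_\Phi$, and linearity of $\Phi$ in its $A$-slot. This is exactly the device used to pass between (M1)--(M4) and (E1)--(E4) in the proof of Theorem 5.1, and I would reuse it verbatim here.

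For (M5)$\Leftrightarrow$(E5) I would pair the ${A^\ast}^{cop}$-leg of (M5) against an arbitrary $y\in A$. On the left this turns the product $a^\varphi b^\phi$ into the convolution $(f^\varphi\ast {f'}^\phi)(y)=f^\varphi(\a_A^{-2}(y_1))\,{f'}^\phi(\a_A^{-2}(y_2))$, and two uses of the dictionary rewrite it as $\sum f\big((\a_A^{-2}(y_1))_\Phi\big)\,f'\big((\a_A^{-2}(y_2))_\Psi\big)\,h^\Phi g^\Psi$. On the right, $(ab)^\varphi=(f\ast f')^\varphi$ and one use of the dictionary gives $\sum (f\ast f')(y_\Phi)\,(hg)^\Phi=\sum f\big(\a_A^{-2}((y_\Phi)_1)\big)f'\big(\a_A^{-2}((y_\Phi)_2)\big)\,(hg)^\Phi$. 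Comparing the two and trading, via the equivariance $(\a_A\o\a_H)\ci\Phi=\Phi\ci(\a_H\o\a_A)$ and the Hom-coalgebra axiom $\a(c_1)\o\D(c_2)=\D(c_1)\o\a(c_2)$, the factor $\a_A^{-2}$ coming from the ${A^\ast}^{cop}$-product for powers of $\a_H$ on the $H$-legs, one recovers precisely the $\a_H^2(h)$, $\a_H^2(g)$ and the Sweedler indices of (E5); since every functional identity in the $A^\ast$-legs is tested through the dual basis, this is equivalent to the identity (E5) in $A\o A\o H$. The whole argument is reversible. The main obstacle is exactly this $\a$-power bookkeeping: it is routine but delicate, and one must be careful not to drop or miscount powers of $\a$ when the $\a_A^{-2}$ of the convolution is pushed through $\Phi$.

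Finally (M6)$\Leftrightarrow$(E6) is the easy case, since the unit $1_{{A^\ast}^{cop}}$ is the counit $\v_A$ of $A$. Evaluating the ${A^\ast}^{cop}$-leg of $\varphi(\v_A\o 1_H)$ at $y\in A$ and applying the dictionary with $f=\v_A$, $h=1_H$ gives $\sum \v_A(y_\Phi)(1_H)^\Phi$, while the right-hand side of (M6) evaluates to $\v_A(y)\,1_H$; equality for all $y$ is literally (E6), and the implication runs both ways. Assembling the two equivalences with the bijection of Theorem 5.1 completes the proof.
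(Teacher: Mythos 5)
Your proposal is correct and follows essentially the same route as the paper: both reduce via the bijection of Theorem 5.1 to checking (M5)$\Leftrightarrow$(E5) and (M6)$\Leftrightarrow$(E6), and both do so by evaluating the ${A^\ast}^{cop}$-legs through the dual-basis dictionary $\sum h^\varphi f^\varphi(a)=\sum f(a_\Phi)h^\Phi$, unpacking the convolution product of ${A^\ast}^{cop}$, and using the $\a$-equivariance of $\Phi$ to absorb the $\a_A^{-2}$ factors into the $\a_H^2$ powers appearing in (E5). The paper's displayed computation for (E5)$\Rightarrow$(M5) is exactly your pairing argument run in one direction, so no gap remains.
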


\begin{proof}
Recall from Theorem 5.1 that a Hom-entwining map $\Phi: H\o A\rightarrow A \o H$ is equivalent to a Hom-cotwistor $\varphi: {A^\ast}^{cop} \o H\rightarrow H \o {A^\ast}^{cop}$. We only need to check that $\Phi$ satisfies Eqs.(E5) and (E6) if and only if $\varphi$ satisfies Eqs.(M5) and (M6).

If $\Phi$ satisfies Eq.(E5), then we have
\begin{eqnarray*}
\sum (hg)^\varphi \o (f \ast f')^\varphi(a) &=& \sum (hg)^\Phi \o (f \ast f')(a_\Phi) \\
&=& \sum (hg)^\Phi \o f(\a_A^{-2}({a_\Phi}_1))f'(\a_A^{-2}({a_\Phi}_2)) \\
&\stackrel {E5}{=}& \sum \a_H^{-2}({\a_H^2(h)}^\Phi {\a_H^2(g)}^\Psi) \o f( \a_A^{-2}({a_1}_\Phi) )f'(\a_A^{-2}({a_2}_\Psi)) \\
&=& \sum h^\Phi g^\Psi \o f({\a_A^{-2}(a_1)}_\Phi)f'({\a_A^{-2}(a_2)}_\Psi)\\
&=& \sum h^\varphi g^\phi \o (f^\varphi f'^\phi)(a)
\end{eqnarray*}
for any $h,g \in H$, $f,f' \in A^\ast$, $a \in A$. Thus Eq.(M5) holds.

Conversely, one can show Eq.(E5) from (M5).

Similarly, Eq.(E6) and (M6) can be induced by each other.
\end{proof}

Define the following
monoidal structure in $\mathcal{M}(\Phi)^H_A(n)$:

$\bullet$ the associativity constraint and the unit constraint in $\mathcal{M}(\Phi)^H_A(n)$ is same as $\overline{\mathcal{H}}^{i,j}(Vec_\Bbbk)$;

$\bullet$ for any $(U, \a_U)$, $(V, \a_V) \in \mathcal{M}(\Phi)^H_A(n)$, the $A$-action and $H$-coaction on $(U \o V, \a_U \o \a_V)$ is given by
\begin{eqnarray*}
&(u \o v) \c a := u\c \a_A^{-i-2}(a_1) \o v \c \a_A^{-j-2}(a_2),\\
&(u \o v)_{(0)} \o (u \o v)_{(1)} := ( u_{(0)} \o v_{(0)} ) \o \a_H^i(u_{(1)}) \a_H^j(v_{(1)});
\end{eqnarray*}

$\bullet$ the tensor product of two arrows $f,g \in \mathcal{M}(\Phi)^H_A(n)$ is given by the
tensor product of $\k$-linear morphisms.

\begin{theorem}
For any $n \in \mathbb{Z}$, $(\mathcal{M}(\Phi)^H_A(n), \o, \k, a,l,r)$ is a monoidal category under the above structures if and only if $(H,A,\Phi)$ is a Hom-monoidal entwining datum.
\end{theorem}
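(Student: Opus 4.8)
The plan is to import the statement into the setting of comonad distributive laws through the dualization $B={A^\ast}^{cop}$ and then invoke the equivalences already established. First I would attach to the Hom-entwining map $\Phi$ the Hom-cotwistor $\varphi:{A^\ast}^{cop}\o H\rightarrow H\o{A^\ast}^{cop}$ supplied by Theorem 5.1, and recall from Proposition 5.8 that $(H,A,\Phi)$ is a Hom-monoidal entwining datum exactly when $\varphi$ is a monoidal Hom-cotwistor. Since both $H$ and ${A^\ast}^{cop}$ are Hom-bialgebras, Theorem 4.10 applies with $B={A^\ast}^{cop}$, and it converts the property that $\varphi$ is a monoidal Hom-cotwistor into the assertion that the bicomodule category $\overline{\mathcal{H}}^{i,j}(Vec_\Bbbk)^{(\ddot{H},{\ddot{A}}^{\ast cop})}(\ddot{\varphi}(n))$ is monoidal under the tensor product recorded there. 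Hence, modulo a comparison of the two monoidal structures, the theorem follows from the chain of biconditionals already in the paper.

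The heart of the argument is therefore to upgrade the category isomorphism $R_n:\mathcal{M}(\Phi)^H_A(n)\rightarrow\overline{\mathcal{H}}^{i,j}(Vec_\Bbbk)^{(\ddot{H},{\ddot{A}}^{\ast cop})}(\ddot{\varphi}(n))$ of Proposition 5.5 to a \emph{monoidal} isomorphism, i.e. to show that $R_n$ carries the tensor product defined immediately before the statement onto the tensor product of Theorem 4.10. On both sides the unit is $\k$ and the associativity and unit constraints are inherited verbatim from $\overline{\mathcal{H}}^{i,j}(Vec_\Bbbk)$, while $R_n$ is the identity on underlying spaces and on morphisms. Thus it suffices to check that, for any $(U,\a_U),(V,\a_V)\in\mathcal{M}(\Phi)^H_A(n)$, the object $R_n(U\o V)$ and the object $R_n(U)\o R_n(V)$ carry the same $\ddot{H}$-coaction and the same ${\ddot{A}}^{\ast cop}$-coaction. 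The $\ddot{H}$-coaction matches trivially: $R_n$ leaves the $H$-comodule structure untouched, and the formula $(u\o v)_{(0)}\o(u\o v)_{(1)}=(u_{(0)}\o v_{(0)})\o\a_H^i(u_{(1)})\a_H^j(v_{(1)})$ for the tensor $H$-coaction is literally the map $\theta^{R_n(U)\o R_n(V)}$ of Theorem 4.10.

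The work is concentrated in the ${\ddot{A}}^{\ast cop}$-coaction. By definition of $R_n$ the ${\ddot{A}}^{\ast cop}$-coaction of a module $W$ is $w\mapsto\sum w\c e_i\o e^i$ in dual bases $e_i,e^i$ of $A,A^\ast$, so on $R_n(U\o V)$ it reads $u\o v\mapsto\sum\big(u\c\a_A^{-i-2}((e_i)_1)\o v\c\a_A^{-j-2}((e_i)_2)\big)\o e^i$ once the tensor $A$-action $(u\o v)\c a=u\c\a_A^{-i-2}(a_1)\o v\c\a_A^{-j-2}(a_2)$ is substituted. To compare this with $\rho^{R_n(U)\o R_n(V)}(u\o v)=(u_{[0]}\o v_{[0]})\o\big(\a_{A^{\ast cop}}^i(u_{[1]})\ast\a_{A^{\ast cop}}^j(v_{[1]})\big)$, where $u_{[0]}\o u_{[1]}=\sum u\c e_i\o e^i$, I would pair the dual factor with an arbitrary $y\in A$. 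Using the dual-basis resolution $\sum_i e^i(y)\,\Delta_A(e_i)=\Delta_A(y)$ on one side, and the twisted convolution formula $(f\ast g)(y)=f(\a_A^{-2}(y_1))g(\a_A^{-2}(y_2))$ together with $\a_{A^{\ast cop}}(f)=f\ci\a_A^{-1}$ on the other, both expressions collapse to $u\c\a_A^{-i-2}(y_1)\o v\c\a_A^{-j-2}(y_2)$, proving the two coactions coincide. The unit object $\k$ is handled the same way: its trivial $A$-action and $H$-coaction correspond under $R_n$ to the unit bicomodule of Theorem 4.10.

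The main obstacle is precisely this last comparison: it is where the shift exponents $-i-2$ and $-j-2$ in the tensor $A$-action must be reconciled with the powers $\a_{A^{\ast cop}}^i,\a_{A^{\ast cop}}^j$ in the coaction through the $\a_A^{-2}$-twisted convolution of ${A^\ast}^{cop}$, and where finite-dimensionality of $A$ is used so that the dual bases interact correctly with $\Delta_A$ and $m_A$. Once $R_n$ is known to be monoidal, the biconditional is immediate, since $\mathcal{M}(\Phi)^H_A(n)$ is then a monoidal category if and only if $\overline{\mathcal{H}}^{i,j}(Vec_\Bbbk)^{(\ddot{H},{\ddot{A}}^{\ast cop})}(\ddot{\varphi}(n))$ is, which by Theorem 4.10 holds if and only if $\varphi$ is a monoidal Hom-cotwistor, which by Proposition 5.8 holds if and only if $(H,A,\Phi)$ is a Hom-monoidal entwining datum.
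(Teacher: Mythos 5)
Your proposal is correct, but it follows a genuinely different route from the paper's own proof. The paper argues directly: for necessity it tests the proposed monoidal structure on explicit objects --- the unit $(\Bbbk,id_\Bbbk)$ forces (E6), and the object $(H\otimes A)\otimes(H\otimes A)$ built from Example 5.4(1) forces (E5) after applying $\varepsilon_H \otimes \alpha_A^{i+1} \otimes \varepsilon_H \otimes \alpha_A^{j+1} \otimes \alpha_H^{n+2}$ to the resulting identity --- and it declares sufficiency ``straightforward'' (a direct check that (E5)--(E6) make the candidate tensor product of two entwined modules again an entwined module). You instead transfer the whole question across the duality $B={A^\ast}^{cop}$: you upgrade the categorical isomorphism $R_n$ of Proposition 5.5 to one matching the two candidate tensor structures, and then chain through Theorem 4.10 and Proposition 5.8. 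The crux of your argument, the dual-basis computation showing that the $\alpha_A^{-2}$-twisted convolution of ${A^\ast}^{cop}$ together with the powers $\alpha_{A^{\ast cop}}^i,\alpha_{A^{\ast cop}}^j$ reproduces exactly the shifts $\alpha_A^{-i-2},\alpha_A^{-j-2}$ in the tensor $A$-action, is correct, and the chain of biconditionals is non-circular since none of the cited results depends on Theorem 5.9. Two caveats are worth recording. First, you need Proposition 5.8 in its pointwise form --- that under the bijection of Theorem 5.1 the specific $\varphi$ attached to the given $\Phi$ is monoidal iff $\Phi$ satisfies (E5)--(E6) --- which is what its proof establishes, although its statement is phrased as a bare existence claim. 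Second, your route leans on finite-dimensionality of $A$ (dual bases), whereas the paper's direct computation does not; in the paper's setting, where everything lives in $Vec_\Bbbk$, this costs nothing. What your approach buys is conceptual economy and a bonus: necessity requires no ad hoc test objects, and you essentially prove the monoidal isomorphism $\mathcal{M}(\Phi)^H_A(n)\cong\overline{\mathcal{H}}^{i,j}(Vec_\Bbbk)^{(\ddot{H},{\ddot{A}}^{\ast cop})}(\ddot{\varphi}(n))$ of Corollary 5.10 en route, rather than deriving it afterwards as the paper does.
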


\begin{proof}
$\Rightarrow$: Assume that $\mathcal{M}(\Phi)^H_A(n)$ is a monoidal category.
Since $(\k,id_\k) \in \mathcal{M}(\Phi)^H_A(n)$, one gets Eq.(E6).

Let us to show Eq.(E5).
Since Example 5.4(1), $H \o A$ is an object in $\mathcal{M}(\Phi)^H_A(n)$.
Then $(H \o A) \o (H \o A)$ is also an object in $\mathcal{M}(\Phi)^H_A(n)$, which implies
\begin{eqnarray*}
&&~~~~\rho^{(H\o A) \o (H \o A)}(( (\a_H^{-n-i-2}(h) \o 1_A) \o (\a_H^{-n-j-2}(g) \o 1_A) )\c a) \\
&&=\sum ((\a_H^{-n-i-2}(h) \o 1_A) \o (\a_H^{-n-j-2}(g) \o 1_A))_{(0)} \c \a_A(a_\Phi) \\
&&~~~~\o \a_H^{-n}( { \a_H^{n+1}(((\a_H^{-n-i-2}(h) \o 1_A) \o (\a_H^{-n-j-2}(g) \o 1_A))_{(1)}) }^\Phi )
\end{eqnarray*}
for any $h,g \in H$, $a \in A$.

Note that
\begin{eqnarray*}
&&~~~~\rho^{(H\o A) \o (H \o A)}(( (\a_H^{-n-i-2}(h) \o 1_A) \o (\a_H^{-n-j-2}(g) \o 1_A) )\c a) \\
&&= \sum \a_H^{-n-i-1}(h_1) \o \a_A^{n+1}({\a_A^{-n-i-2}(a_1)}_\Phi) \o \a_H^{-n-j-1}(g_1) \o \a_A^{n+1}({\a_A^{-n-j-2}(a_2)}_\Psi)\\
&&~~~~ \o \a_H^i({\a_H^{-n-i-1}(h_2)}^\Phi) \a_H^j({\a_H^{-n-j-1}(g_2)}^\Psi)\\
&&= \sum \a_H^{-n-i-1}(h_1) \o \a_A^{-i-1}({a_1}_\Phi) \o \a_H^{-n-j-1}(g_1) \o \a_A^{-j-1}({a_2}_\Psi) \o \a_H^{-n-2}({\a_H(h_2)}^\Phi) \a_H^{-n-2}({\a_H(g_2)}^\Psi),
\end{eqnarray*}
and
\begin{eqnarray*}
&&~~~\sum ((\a_H^{-n-i-2}(h) \o 1_A) \o (\a_H^{-n-j-2}(g) \o 1_A))_{(0)} \c \a_A(a_\Phi) \\
&&~~~~~\o \a_H^{-n}( { \a_H^{n+1}(((\a_H^{-n-i-2}(h) \o 1_A) \o (\a_H^{-n-j-2}(g) \o 1_A))_{(1)}) }^\Phi )\\
&&= \sum \a_H^{-n-i-1}(h_1) \o \a_A^{-i-1}({a_\Phi}_1) \o \a_H^{-n-j-1}(g_1) \o \a_A^{-j-1}({a_\Phi}_2) \o \a_H^{-n}((\a_H^{-1}(h) \a_H^{-1}(h))^\Phi),
\end{eqnarray*}
then consequently,
\begin{eqnarray*}
&&\sum \a_H^{-n-i-1}(h_1) \o \a_A^{-i-1}({a_1}_\Phi) \o \a_H^{-n-j-1}(g_1) \o \a_A^{-j-1}({a_2}_\Psi) \o \a_H^{-n-2}({\a_H(h_2)}^\Phi) \a_H^{-n-2}({\a_H(g_2)}^\Psi)\\
&&= \sum \a_H^{-n-i-1}(h_1) \o \a_A^{-i-1}({a_\Phi}_1) \o \a_H^{-n-j-1}(g_1) \o \a_A^{-j-1}({a_\Phi}_2) \o \a_H^{-n}((\a_H^{-1}(h) \a_H^{-1}(h))^\Phi).
\end{eqnarray*}
Applying $\v_H \o \a_A^{i+1} \o \v_H  \o \a_A^{j+1} \o \a_H^{n+2}$ on the both sides of the above equation, one gets
Eq.(E5).

$\Leftarrow$: Straightforward.
\end{proof}

\begin{corollary}
If $(H,A,\Phi)$ is a Hom-monoidal entwining datum, then for any $f,f' \in {A^\ast}$, $x,y,h\in H$, ${A^{\ast}}^{cop} \o H$ admits the following Hom-bialgeba structure:
\begin{eqnarray*}
&\widehat{\D}(f\o h):=\sum (f_2 \o {h_1}^\varphi) \o ({f_1}^\varphi \o h_2),~~~\widehat{\v}(f \o h):=\v_H(h)f(1_A),\\
&\a_{{A^\ast}^{cop} \o H}=\a_{A^\ast} \o \a_H,~~~(f \o x)(f' \o y):=f\ast f' \o xy,~~~1_{{A^\ast}^{cop}\otimes H}:=\v_H \o 1_A,
\end{eqnarray*}
where $\varphi: {A^\ast}^{cop} \o H\rightarrow H \o {A^\ast}^{cop}$ is induced by $\Phi$.

Further,
for any $n \in \mathbb{Z}$, $\mathcal{M}(\Phi)^H_A(n)$ is monoidal isomorphic to the category $Corep^{i,j}({A^\ast}^{cop} \o H)$.
Or equivalently,
$\mathcal{M}(\Phi)^H_A(n)$ and $\overline{\mathcal{H}}^{i,j}(Vec_\Bbbk)^{(\ddot{H},{\ddot{A}}^{\ast cop})}(\ddot{\varphi}(n))$ are monoidal isomorphic.
\end{corollary}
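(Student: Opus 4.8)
The plan is to specialize the structural results of Section 4 to the case $B = {A^\ast}^{cop}$ and then transport the monoidal structure along the categorical isomorphism of Proposition 5.5. First I would record two preliminary facts. Since $(A,\a_A)$ is a Hom-bialgebra, the remark preceding Proposition 5.8 shows that $(A^{\ast cop},\a_{A^{\ast cop}})$ is again a Hom-bialgebra; hence $B := {A^\ast}^{cop}$ and $H$ are both Hom-bialgebras over $\k$. Next, since $(H,A,\Phi)$ is a Hom-monoidal entwining datum, Proposition 5.8 yields that the Hom-cotwistor $\varphi:{A^\ast}^{cop} \o H \to H \o {A^\ast}^{cop}$ corresponding to $\Phi$ under Theorem 5.1 is in fact a \emph{monoidal} Hom-cotwistor. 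This places us exactly in the hypotheses of Theorem 4.10 with $B={A^\ast}^{cop}$.

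Applying Theorem 4.10 (equivalence (2)$\Leftrightarrow$(5)) then gives that $B \o H = {A^\ast}^{cop} \o H$ carries a Hom-bialgebra structure. I would check that the operations produced there coincide with those asserted in the corollary: the comultiplication $\widehat{\D}$ and counit $\widehat{\v}$ are the Hom-coalgebra data of Definition 4.5 (Theorem 4.6) read off for $B={A^\ast}^{cop}$, while the multiplication $(f\o x)(f'\o y)=f\ast f' \o xy$ and unit $\v_H \o 1_A$ are the algebra data $m_{B\o H}$ and $1_B\o 1_H$ of Theorem 4.10(5); the Hom-structure map is $\a_{A^\ast}\o\a_H$. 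This establishes the first assertion.

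For the monoidal comparison I would proceed as follows. By Theorem 4.10, statement (7), $Corep^{i,j}({A^\ast}^{cop}\o H)$ is a monoidal category and $\overline{\mathcal{H}}^{i,j}(Vec_\Bbbk)^{(\ddot{H},{\ddot{A}}^{\ast cop})}(\ddot{\varphi}(n))$ is monoidally isomorphic to it, for every $n\in\mathbb{Z}$. On the other hand, Proposition 5.5 furnishes a categorical isomorphism $R_n$ (with inverse $T_n$) between $\mathcal{M}(\Phi)^H_A(n)$ and $\overline{\mathcal{H}}^{i,j}(Vec_\Bbbk)^{(\ddot{H},{\ddot{A}}^{\ast cop})}(\ddot{\varphi}(n))$, and Theorem 5.9 (applicable because $(H,A,\Phi)$ is a Hom-monoidal entwining datum) guarantees that $\mathcal{M}(\Phi)^H_A(n)$ is monoidal under the structure fixed just before that theorem. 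Thus it remains only to verify that $R_n$ is a \emph{monoidal} functor; composing it with the monoidal isomorphism of Theorem 4.10(7) then yields the desired monoidal isomorphism $\mathcal{M}(\Phi)^H_A(n)\cong Corep^{i,j}({A^\ast}^{cop}\o H)$, and equivalently the last statement. Since $R_n$ is the identity on underlying objects and morphisms and both tensor products live on the same space $U\o V$, the coherence maps are identities, so monoidality reduces to an equality of transported structures.

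The main obstacle is precisely checking this monoidality of $R_n$. The $H$-coactions pose no difficulty: the tensor $H$-coaction $(u\o v)_{(0)}\o(u\o v)_{(1)} = (u_{(0)}\o v_{(0)})\o\a_H^i(u_{(1)})\a_H^j(v_{(1)})$ on $\mathcal{M}(\Phi)^H_A(n)$ is literally the map $\theta^{U\o V}$ of Theorem 4.10(1). The real work lies in the $A$-action side. Under $R_n$ the ${A^\ast}^{cop}$-coaction on an object is $u\mapsto\sum u\c e_i\o e^i$ in terms of dual bases $e_i,e^i$ of $A,A^\ast$, so I must show that the tensor $A$-action $(u\o v)\c a = u\c\a_A^{-i-2}(a_1)\o v\c\a_A^{-j-2}(a_2)$ translates into the ${A^\ast}^{cop}$-coaction $u\o v\mapsto(u_{[0]}\o v_{[0]})\o \a_B^i(u_{[1]})\ast\a_B^j(v_{[1]})$ of Theorem 4.10(1), where $\ast$ is the convolution product of ${A^\ast}^{cop}$. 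This is a dual-basis computation that matches the convolution product $\ast$ (the transpose of the multiplication of $A$) against the tensor product of module actions while tracking the powers of $\a_A$ and $\a_H$; the key compatibility driving it is Eq.(E5) of the Hom-monoidal entwining datum, exactly as Eq.(M5) drives the corresponding step in Theorem 4.10. I expect this verification to be forced by the definitions, so that once it is done the unit object $(\k,id_\k)$ is handled by Eq.(E6) and the associativity and unit constraints, being inherited from $\overline{\mathcal{H}}^{i,j}(Vec_\Bbbk)$ on both sides, are automatically respected.
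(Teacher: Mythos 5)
Your proposal is correct and follows essentially the same route as the paper, whose entire proof is the citation ``Straightforward from Proposition 5.8 and Theorem 4.10'': you invoke exactly those results (plus Proposition 5.5 and Theorem 5.9, which the paper leaves implicit in ``straightforward'') and fill in the transport of the monoidal structure along $R_n$. The only minor imprecision is your attribution of the dual-basis verification to Eq.~(E5); that equality of the transported ${A^\ast}^{cop}$-coaction with the tensor coaction is purely formal, with (E5)/(E6) needed only so that both categories carry monoidal structures in the first place.
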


\begin{proof}
Straightforward from Proposition 5.8 and Theorem 4.10.
\end{proof}

\section{\textbf{Applications}}

\vskip 0.3cm
\subsection{\textbf{Hom-Doi-Hopf modules}}
\def\theequation{6.1.\arabic{equation}}
\setcounter{equation} {0} \hskip\parindent
\vskip 0.4cm

Let $(H,\a_H)$ be a Hom-bialgebra. A Hom-algebra $(A,\a_A)$ is called a \emph{right $H$-comodule algebra} if $A$ is a right $H$-comodule via $\r$, and
$$(ab)_{(0)}\o(ab)_{(1)}=a_{(0)}b_{(0)} \o a_{(1)}b_{(1)},~~~(1_A)_{(0)}\o(1_A)_{(1)}=1_A\o1_H,$$
for all $a,b\in A$. A Hom-coalgebra $(C,\a_C)$ is called a \emph{right $H$-module coalgebra} if $C$ is a right $H$-module and
$$\Delta_C( c\cdot h)=c_1\cdot h_1 \o c_2\cdot h_2,~~~\varepsilon_C(c\cdot h)=\varepsilon_C(c)\varepsilon_H(h),$$
for all $h\in H,c\in C$.

Suppose that $(A,\a_A)$ is a right $H$-comodule algebra and $(C,\a_C)$ is a right $H$-module coalgebra. For any integer $k \in \mathbb{Z}$,
an object $(U,\a_U)$ is called a \emph{$k$-th Doi-Hopf module over $(H,A,C)$} if $U$ is a right $A$-module and a right $C$-comodule satisfying
$$(u\cdot a)_{[0]}\o(u\cdot a)_{[1]}=u_{[0]}\cdot a_{(0)}\o u_{[1]}\cdot \a_H^k(a_{(1)}),$$
for all $a\in A$ and $m\in M$. Moreover, we call $(H,A,C)$ the \emph{Hom-Doi-Hopf datum}.

The category of $k$-th Doi-Hopf modules over $(H,A,C)$ and $A$-linear $C$-colinear homomorphisms will be denoted by $\mathcal{H}(k)_A^C$.

Let $m$ be an integer in $\mathbb{Z}$. If we define
\begin{equation*}
\begin{split}
\Phi: C\otimes A  &\longrightarrow A\otimes C \\
c\otimes a &\longmapsto \a_A^{-1}(a_{(0)}) \otimes \a_C^{-1}(c)\c \a_H^{m}(a_{(1)}),
\end{split}
\end{equation*}
where $c \in C$, $a \in A$, then it is a direct computation to check that $(C,A,\Phi)$ is a Hom-entwining structure.
Further, for $n \in \mathbb{Z}$, $(U,\a_U)$ is an object in $\mathcal{M}(\Phi)^C_A(n)$ means that
$$
\rho(u \c a) = u_{[0]}\c a_{(0)} \o u_{[1]} \c \a_H^{m-n}(a_{(1)}),~~u \in U,~~a \in A,
$$
which implies the category of $n$-th Hom-entwined modules is actually the category of $(m-n)$-th Doi-Hopf modules over $(H,A,C)$, i.e.,
$\mathcal{M}(\Phi)^H_H(n) = \mathcal{H}(m-n)_A^C$.

Recall from Theorem 5.1, for any $f \in A^\ast$, $c \in C$, the following Hom-cotwistor:
\begin{equation*}
\begin{split}
\varphi: {A^\ast}^{cop}\otimes C  &\longrightarrow C\otimes {A^\ast}^{cop} \\
f\otimes c &\longmapsto \sum f(\a_A^{-1}({e_i}_{(0)})) \a_C^{-1}(c) \c \a_A^{m}({e_i}_{(1)}) \o e^i,
\end{split}
\end{equation*}
is deduced from $\Phi$ which is defined above,
where $\sum e_i \o e^i$ are the dual bases of $A$ and ${A}^{\ast}$. Hence ${A^\ast}^{cop}\otimes C$ is a Hom-coalgebra under the following structures:
\begin{eqnarray*}
&\widehat{\D}(f\o c):=\sum f_1( \a_A^{-1}({e_i}_{(0)}) )f_2 \o \a_C^{-1}(c_1) \c \a_A^{m}({e_i}_{(1)}) \o (e^i \o c_2),\\
&\a_{{H^\ast}^{cop} \o H}=\a_{H^\ast} \o \a_H,~~\widehat{\v}(f \o h):=\v_H(h)f(1_H),
\end{eqnarray*}
where $f \in {A^\ast}$, $c\in C$. We call this Hom-coalgebra ${A^\ast}^{cop}\otimes C$ \emph{the $m$-th Doi-codouble of $C$ and $A$}.

Since Proposition 5.8 and Theorem 5.9, we have the following property.

\begin{theorem}
Supposed that $(H,A,C)$ is a Hom-Doi-Hopf datum, $(A,\a_A)$ and $(C,\a_C)$ are all Hom-bialgebras.
Then the following statements are equivalent:

(1) for any $c,d \in C$, $a \in A$, $\Phi$ satisfies
\begin{eqnarray*}
&{a_1}_{(0)} \o {a_2}_{(0)} \o ( c \c {a_1}_{(1)} ) ( d \c {a_2}_{(1)} ) = {a_{(0)}}_1 \o {a_{(0)}}_2 \o (cd) \c \a_H^2(a_{(1)}), \\
&\v_A(a)1_C = 1_C \c a_{(1)} \v_A(a_{(0)});
\end{eqnarray*}

(2) $(C,A,\Phi)$ is a Hom-monoidal entwining structure;

(3) $\varphi$ is a monoidal Hom-cotwistor;

(4) the $m$-th Doi-codouble ${A^\ast}^{cop}\otimes C$ is a Hom-bialgebra under the following Hom-algebra structure:
\begin{eqnarray*}
(f \o x)(f' \o y):=f\ast f' \o xy,~~~~~1_{{A^\ast}^{cop}\otimes C}:=\v_A \o 1_C,
\end{eqnarray*}
where $f, f'\in A^\ast$, $x,y \in C$;

(5) $\mathcal{H}(k)_A^C$ is a monoidal category with the following structures:

$\bullet$ the associativity constraint and the unit constraint in $\mathcal{H}(k)_A^C$ is same as $\overline{\mathcal{H}}^{i,j}(Vec_\Bbbk)$;

$\bullet$ for any $(U,\a_U), (V, \a_V) \in \mathcal{H}(k)_A^C$, the Hom-module and Hom-comodule structure on $U \o V$ are given by
\begin{eqnarray*}
(u \o v)\c a:= u \c \a_A^{-i-2}(a_1) \o v \c \a_A^{-j-2}(a_2),\\
\rho^{U \o V}(u \o v):=u_{[0]} \o v_{[0]} \o \a_C^i(u_{[1]})\a_C^j(v_{[1]});
\end{eqnarray*}

$\bullet$ the tensor product of two arrows $f,g \in \mathcal{H}(k)_A^C$ is given by the
tensor product of $\k$-linear morphisms.
Further, $\mathcal{H}(k)_A^C$ and $Corep^{i,j}({A^\ast}^{cop}\otimes C)$ are isomorphic as monoidal categories.
\end{theorem}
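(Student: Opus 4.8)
The plan is to close the cycle $(1)\Rightarrow(2)\Rightarrow(3)\Rightarrow(4)\Rightarrow(5)\Rightarrow(1)$ by reducing nearly every link to the general results of Sections 4 and 5, isolating the one genuinely computational step. The guiding observation is that the Doi--Hopf datum yields a Hom-entwining structure $(C,A,\Phi)$ in which the module coalgebra $C$ occupies the role of the coalgebra ``$H$'' of Section 5 and $B={A^\ast}^{cop}$ occupies the role of the second factor; under this dictionary the $m$-th Doi-codouble ${A^\ast}^{cop}\o C$ is exactly the smash coproduct $B\o C$.

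The heart of the argument is $(1)\Leftrightarrow(2)$, and this is the step I expect to be the main obstacle. Here one substitutes the explicit map $\Phi(c\o a)=\a_A^{-1}(a_{(0)})\o\a_C^{-1}(c)\c\a_H^m(a_{(1)})$ into the abstract axioms $(E5)$ and $(E6)$ read with $C$ in place of $H$, so that $a_\Phi=\a_A^{-1}(a_{(0)})$ and $c^\Phi=\a_C^{-1}(c)\c\a_H^m(a_{(1)})$. Expanding the two sides of $(E5)$ and applying the comodule-algebra axiom $(ab)_{(0)}\o(ab)_{(1)}=a_{(0)}b_{(0)}\o a_{(1)}b_{(1)}$ together with the module-coalgebra axiom $\D_C(c\c h)=c_1\c h_1\o c_2\c h_2$, then carefully cancelling the accumulated powers of $\a_A$, $\a_C$ and $\a_H$ via the respective Hom-(co)associativity relations, shows that $(E5)$ is equivalent to the first displayed identity of $(1)$. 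For $(E6)$, inserting $(1_C)^\Phi=1_C\c\a_H^m(a_{(1)})$ and using $\v_A\ci\a_A=\v_A$ reduces it to the counit-type identity of $(1)$. This bookkeeping of the twisting exponents is where the real work lies.

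The remaining links are formal. For $(2)\Leftrightarrow(3)$ I would invoke Proposition 5.8 verbatim (with $C$ for $H$), which identifies Hom-monoidal entwining data $(C,A,\Phi)$ with monoidal Hom-cotwistors $\varphi:{A^\ast}^{cop}\o C\to C\o{A^\ast}^{cop}$. For $(3)\Leftrightarrow(4)$ I would apply Theorem 4.10 with $B={A^\ast}^{cop}$ and second factor $C$: its equivalence $(2)\Leftrightarrow(5)$ states precisely that $\varphi$ is a monoidal Hom-cotwistor if and only if the smash coproduct ${A^\ast}^{cop}\o C$ carries a Hom-bialgebra structure, and one checks that the general multiplication $(f\o x)(f'\o y)=f\ast f'\o xy$ and unit $\v_A\o 1_C$ of Theorem 4.10 specialise to the formulas in $(4)$.

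Finally, for $(2)\Leftrightarrow(5)$ I would combine Theorem 5.9 with the identification $\mathcal{M}(\Phi)^C_A(n)=\mathcal{H}(m-n)_A^C$ recorded before the theorem. Theorem 5.9 gives that $\mathcal{M}(\Phi)^C_A(n)$ is monoidal, with exactly the tensor structure displayed in $(5)$, if and only if $(C,A,\Phi)$ is a Hom-monoidal entwining datum, i.e.\ if and only if $(2)$ holds; since $k=m-n$ ranges over all of $\mathbb{Z}$ as $n$ does, fixing $k$ and taking $n=m-k$ transports this to $\mathcal{H}(k)_A^C$. The monoidal isomorphism $\mathcal{H}(k)_A^C\cong Corep^{i,j}({A^\ast}^{cop}\o C)$ then follows by transporting the monoidal isomorphism $\mathcal{M}(\Phi)^C_A(n)\cong Corep^{i,j}({A^\ast}^{cop}\o C)$ of Corollary 5.10 along this identification. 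Assembling the four equivalences closes the cycle and establishes the theorem.
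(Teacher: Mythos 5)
Your proposal is correct and follows essentially the same route as the paper, whose entire proof is the reduction ``Since Proposition 5.8 and Theorem 5.9'' --- i.e.\ specializing the general machinery of Sections 4--5 (Proposition 5.8, Theorem 5.9, together with Theorem 4.10 and Corollary 5.10) to the entwining map $\Phi(c\o a)=\a_A^{-1}(a_{(0)})\o\a_C^{-1}(c)\c\a_H^{m}(a_{(1)})$ and the identification $\mathcal{M}(\Phi)^C_A(n)=\mathcal{H}(m-n)_A^C$. Your explicit unwinding of (E5)--(E6) into the identities of statement (1), via the substitution $a\mapsto\a_A^{-m}(a)$ and rescaling by powers of the structure maps, is precisely the bookkeeping the paper leaves implicit.
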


\vskip 0.3cm
\subsection{\textbf{Hom-Long dimodules and Hom-$\mathcal{D}$-equation}}
\def\theequation{6.2.\arabic{equation}}
\setcounter{equation} {0} \hskip\parindent
\vskip 0.4cm

Suppose that $(H,\a_H)$ is a Hom-bialgebra over $\k$.
If $(U,\a_U)$ is both a right $H$-Hom-module and a right $H$-Hom-comodule, and satisfies the following compatibility condition:
\begin{eqnarray*}
\rho(u \c h) = u_{(0)}\c \a_H(h)\o \a_H(u_{(1)}),
\end{eqnarray*}
for all $u \in U$ and $h \in H$, then we call $(U,\a_U)$ a \emph{Hom-Long dimodule}.
We denote by $\mathcal{L}^H_H$ the category of Hom-Long dimodules, morphisms being $H$-linear and $H$-colinear (see \cite{fl} for more detail of the dimodules).

\begin{proposition}
$\mathcal{L}^H_H$ is a monoidal category with the following structures:

$\bullet$ the associativity constraint and the unit constraint in $\mathcal{L}^H_H$ is same as $\overline{\mathcal{H}}^{i,j}(Vec_\Bbbk)$;

$\bullet$ for any $(U,\a_U), (V, \a_V) \in \mathcal{L}^H_H$, the Hom-module and Hom-comodule structure on $U \o V$ are given by
\begin{eqnarray*}
(u \o v)\c h:= u \c \a_H^{-i-2}(h_1) \o v \c \a_H^{-j-2}(h_2),\\
\rho^{U \o V}(u \o v):=u_{(0)} \o v_{(0)} \o \a_H^i(u_{(1)})\a_H^j(v_{(1)});
\end{eqnarray*}

$\bullet$ the tensor product of two arrows $f,g \in \mathcal{L}^H_H$ is given by the
tensor product of $\k$-linear morphisms.
\end{proposition}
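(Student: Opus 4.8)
The plan is to realize $\mathcal{L}^H_H$ as a category of $n$-th Hom-entwined modules and then invoke Theorem 5.9. Taking $A = H$ as the underlying Hom-bialgebra, I would define the Hom-entwining map $\Phi : H \o A \to A \o H$ to be the flip $\Phi(h \o a) = a \o h$, so that in the notation $\Phi(h \o a) = \sum a_\Phi \o h^\Phi$ one has $a_\Phi = a$ and $h^\Phi = h$. That $\Phi$ is a Hom-entwining map is immediate: the compatibility $(\a_A \o \a_H) \ci \Phi = \Phi \ci (\a_H \o \a_A)$ holds because the flip commutes with $\a_H \o \a_H$, and each of (E1)--(E4), after the substitutions $a_\Phi = a$ and $h^\Phi = h$, collapses to a tautology. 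Hence $(H, H, \Phi)$ is a Hom-entwining structure.

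Next I would match the two compatibility conditions. Specializing the defining identity (5.1) of an $n$-th $\Phi$-module to this $\Phi$ gives
$$\rho^U(u \c h) = u_{(0)} \c \a_H(h) \o \a_H^{-n}(\a_H^{n+1}(u_{(1)})) = u_{(0)} \c \a_H(h) \o \a_H(u_{(1)}),$$
which is exactly the Hom-Long dimodule axiom, and the coincidence holds for every $n \in \mathbb{Z}$. Since the underlying objects, the $A$-action, the $H$-coaction and the morphisms agree on the two sides, we obtain $\mathcal{L}^H_H = \mathcal{M}(\Phi)^H_H(n)$ as categories, for any fixed integer $n$.

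The only substantive point is to verify that $(H, H, \Phi)$ is in fact a Hom-monoidal entwining datum, i.e. that $\Phi$ satisfies (E5) and (E6). For the flip, (E6) reads $\v_A(a)1_H = \v_A(a)1_H$ and is trivial, while in (E5) both sides reduce to $\sum a_1 \o a_2 \o (-)$ and the identity becomes $\a_H^2(h)\a_H^2(g) = \a_H^2(hg)$. This is precisely the multiplicativity of $\a_H$, which holds because $m_H$ is a morphism in $\overline{\mathcal{H}}^{i,j}(Vec_\Bbbk)$ --- that is, because $(H, \a_H)$ is a genuine Hom-bialgebra and not merely an unrelated Hom-algebra together with a Hom-coalgebra. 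I expect this to be the main, albeit very short, obstacle: it is the single step at which the compatibility between the algebra and coalgebra structures of $H$ is genuinely used.

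Finally, with $(H, H, \Phi)$ established as a Hom-monoidal entwining datum, Theorem 5.9 yields that $\mathcal{M}(\Phi)^H_H(n)$ is a monoidal category. It then remains only to read off, under $A = H$, the monoidal structure furnished by Theorem 5.9: the tensor action $(u \o v) \c h = u \c \a_H^{-i-2}(h_1) \o v \c \a_H^{-j-2}(h_2)$, the tensor coaction $\rho^{U \o V}(u \o v) = u_{(0)} \o v_{(0)} \o \a_H^i(u_{(1)}) \a_H^j(v_{(1)})$, the tensor product of arrows given by that of $\k$-linear maps, and the associativity and unit constraints inherited from $\overline{\mathcal{H}}^{i,j}(Vec_\Bbbk)$. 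Via the identification $\mathcal{L}^H_H = \mathcal{M}(\Phi)^H_H(n)$ these are verbatim the structures in the statement, which completes the proof.
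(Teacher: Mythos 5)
Your proposal is correct, but it takes a different route from the paper's official proof, which is simply a direct verification of the monoidal axioms (the paper disposes of this proposition with the single word ``Straightforward''). What you do instead --- take $A=H$, let $\Phi$ be the flip, check (E1)--(E6) collapse to tautologies plus multiplicativity of $\a_H$, identify $\mathcal{M}(\Phi)^H_H(n)=\mathcal{L}^H_H$ for every $n$, and invoke Theorem 5.9 --- is precisely the identification the paper itself carries out in the paragraph \emph{immediately after} the proposition, where the flip entwining map is introduced and $\mathcal{M}(\Phi)^H_H(n)=\mathcal{L}^H_H$ is observed. So there is no circularity (Theorem 5.9 is proved in Section 5 independently of Section 6), and your argument is sound; it has the merit of exhibiting the proposition as a corollary of the general entwining machinery rather than an isolated check, at the cost of resting on the ``$\Leftarrow$'' implication of Theorem 5.9, which the paper also leaves as straightforward, so the elementary verification has merely been relocated rather than eliminated. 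One detail worth making explicit: the multiplicativity $\a_H(hg)=\a_H(h)\a_H(g)$ that you need for (E5) is not listed among the paper's Hom-algebra axioms, but it does follow from them, since $\a_H(hg)=1_H(hg)=\a_H(1_H)(hg)=(1_H h)\a_H(g)=\a_H(h)\a_H(g)$; you correctly flag this as the only step where the bialgebra compatibility of $H$ genuinely enters.
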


\begin{proof}
Straightforward.
\end{proof}

If we define $\Phi: H\otimes H \rightarrow H\otimes H$ by $\Phi(c\otimes a):=a\otimes c$ for any $c,a \in H$,
then it is a direct computation to check that $(H,H,\Phi)$ is a Hom-monoidal entwining datum. Hence for any $n \in \mathbb{Z}$, $\mathcal{M}(\Phi)^H_H(n)$ is a monoidal category.
Further, $(U,\a_U)$ is an object in $\mathcal{M}(\Phi)^H_H(n)$ means that
$$
\rho(u \c a) = u_{(0)}\c \a_H(a) \o \a_H(u_{(1)}),~~u \in U,~~a \in H,
$$
which implies $\mathcal{M}(\Phi)^H_H(n) = \mathcal{L}^H_H$.

Recall from Proposition 5.8, $\Phi$ could induce the following monoidal Hom-cotwistor:
\begin{equation*}
\begin{split}
\varphi: {H^\ast}^{cop}\otimes H  &\longrightarrow H\otimes {H^\ast}^{cop} \\
f\otimes h &\longmapsto h \o f,
\end{split}
\end{equation*}
where $f \in H^\ast$, $h \in H$. Obviously ${H^\ast}^{cop}\otimes H$ is a Hom-bialgebra under the usual tensor structures.
We call this Hom-bialgebra ${H^\ast}^{cop}\otimes H$ the \emph{Hom-Long-codouble of $H$}.
Furthermore, since Corollary 5.10, the category of Hom-Long dimodules is isomorphic to the corepresentations of ${H^\ast}^{cop}\otimes H$
as monoidal categories.

Recall from \cite{gm} that we have the following definition.

\begin{definition}
Let $U$ be a vector space over $\k$ and $R \in End_\k(U \o U)$. We say that $R$ is a \emph{solution of the $\mathcal{D}$-equation} if
$$
R^{12}R^{23} = R^{23}R^{12}
$$
in $End_\k(U \o U \o U)$.
\end{definition}

\begin{theorem}
Let $(H,\a_H)$ be a Hom-bialgebra over $\k$, $\mathcal{L}^H_H$ denote the category of Hom-Long dimodules of $H$.
For all integer $m \in \mathbb{Z}$, if we define the following $\k$-linear map
\begin{equation*}
\begin{split}
\xi_{U,V}: U\otimes V  &\longrightarrow U\otimes V \\
u\otimes v &\longmapsto \a_U^{-1}(u) \c \a_H^m(v_{(1)}) \otimes \a_V^{-1}(v_{(0)}),
\end{split}
\end{equation*}
where $(U,\a_U), (V,\a_V) \in \mathcal{L}^H_H$, then $\xi$ satisfies the following generalized Hom-type $\mathcal{D}$-equation in $\overline{\mathcal{H}}^{i,j}(Vec_\Bbbk)$:
$$\aligned
\xymatrix{
(U\otimes V)\otimes W \ar[d]|{\xi_{U,V} \o id_W} \ar[rr]^{a_{U,V,W}} & & U\otimes (V\otimes W) \ar[rr]^{id_{U}\o \xi_{V, W}}&& U\otimes (V\otimes W) \ar[rr]^{a^{-1}_{U,V, W}} & & (U\otimes V)\otimes W \ar[d]|{\xi_{U,V} \o id_W} \\
(U\o V) \o W \ar[rr]_{a_{U,W, V}} & & U \o (V\o W) \ar[rr]_{id_U \o \xi_{V,W}} && U \o (V\o W) \ar[rr]_{a^{-1}_{U,V, W}} && (U\otimes V)\otimes W.}
\endaligned$$
\end{theorem}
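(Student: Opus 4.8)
The plan is to verify the diagram by a direct element-chase, reducing both composite legs to a common normal form. As a preliminary step I would check that $\xi_{U,V}$ is indeed a morphism of $\overline{\mathcal{H}}^{i,j}(Vec_\Bbbk)$, i.e. that it commutes with $\alpha_U \otimes \alpha_V$; this follows from the action and coaction being morphisms in $\overline{\mathcal{H}}^{i,j}(Vec_\Bbbk)$, so that $\alpha_U(u \cdot h) = \alpha_U(u) \cdot \alpha_H(h)$ and $\rho(\alpha_V(v)) = \alpha_V(v_{(0)}) \otimes \alpha_H(v_{(1)})$, and it is what guarantees that the composites of $\xi$ with the associators $a$ and $a^{-1}$ appearing in the diagram are well defined. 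I would then fix a general element $(u \otimes v) \otimes w$ and compute the two legs on it.

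The conceptual heart of the proof is the Hom-Long compatibility condition $\rho(v \cdot h) = v_{(0)} \cdot \alpha_H(h) \otimes \alpha_H(v_{(1)})$. It says that, up to an $\alpha_H$-twist, the coaction label of an acted element does not depend on the acting element $h$, since $(v \cdot h)_{(1)} = \alpha_H(v_{(1)})$. This decoupling is exactly what forces the two copies of $\xi$ --- one acting on the first and second tensor factors, the other on the second and third --- to commute, and hence yields the $\mathcal{D}$-equation rather than a braid relation. Concretely, on each leg the first application of $\xi$ moves an $H$-component onto an adjacent factor, so that the second application of $\xi$ must take the coaction of an already-acted element; the compatibility condition converts that coaction back into the coaction of the original element plus a bounded $\alpha_H$-power. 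After this rewriting both legs collapse to a common expression of the schematic shape $\alpha_U^{\,?}(u) \cdot (\cdots v_{(1)} \cdots w_{(1)} \cdots) \otimes \alpha_V^{\,?}(v_{(0)}) \cdot (\cdots) \otimes \alpha_W^{\,?}(w_{(0)})$, and the theorem reduces to matching these two expressions.

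The main difficulty is purely the bookkeeping of the structure-map powers produced along the way. The associativity constraint inserts $\alpha^{i+1}$ on the first tensorand and $\alpha^{-j-1}$ on the last (and $a^{-1}$ the opposite powers), the definition of $\xi$ carries the explicit $\alpha_H^m$ shift together with the $\alpha^{-1}$ normalisations on $u$ and on $v_{(0)}$, and reassociating the two acting $H$-elements consumes the Hom-module associativity axiom and the Hom-coassociativity of $H$. I would run the two chases in parallel and use the Hom-counit and Hom-coassociativity identities to absorb the stray $\alpha_H$-powers, then verify that the net exponent on each of $u$, $v$, $w$ and on each acting $H$-factor agrees on the two sides. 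This agreement is precisely what the particular choice of associators in the two rows of the diagram is designed to guarantee; once the exponents are reconciled the two normal forms coincide, so the diagram commutes and $\xi$ solves the Hom-type $\mathcal{D}$-equation.
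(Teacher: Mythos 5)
Your proposal is correct and follows essentially the same route as the paper: a direct element chase of both composite legs, with the Hom-Long compatibility $\rho(v\cdot h)=v_{(0)}\cdot\a_H(h)\otimes\a_H(v_{(1)})$ used exactly once (on the top leg, where $\xi_{U,V}$ must take the coaction of the already-acted element $\a_V^{-1}(v)\cdot\a_H^{m-j-1}(w_{(1)})$) to commute the action past the coaction, after which the $\a$-powers produced by the associators and by the normalisations in $\xi$ match on both sides. One correction to your anticipated bookkeeping: the chase never needs the Hom-module associativity axiom, the Hom-coassociativity of $H$, or the counit. In the $\mathcal{D}$-equation (unlike the Yang--Baxter equation) no two $H$-elements ever act on the same tensorand --- $u$ is acted on once by a twist of $v_{(1)}$, $v$ once by a twist of $w_{(1)}$, and $w$ is never acted on --- and $\Delta_H$ is never applied, so no reassociation of acting elements and no absorption of stray powers via coassociativity/counit occurs; the only identities needed besides the compatibility condition are that the (co)actions commute with the structure maps, i.e. $\a_U(u\cdot h)=\a_U(u)\cdot\a_H(h)$ and $\rho\circ\a_V=(\a_V\otimes\a_H)\circ\rho$, which is also what makes $\xi_{U,V}$ a morphism in $\overline{\mathcal{H}}^{i,j}(Vec_\Bbbk)$ as you note at the outset.
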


\begin{proof}
For any $u \in U$, $v \in V$, $w \in W$, since the following identities
\begin{eqnarray*}
&&~~~( (\xi_{U,V} \o id_W) \ci a^{-1}_{U,W, V} \ci (id_{U}\o \xi_{V, W}) \ci a_{U,V,W} ) ((u\otimes u)\otimes w) \\
&&= ( (\xi_{U,V} \o id_W) \ci a^{-1}_{U,W, V} ) ( \a_U^{i+1}(u) \o (\a_V^{-1}(v) \c \a_H^{m-j-1}(w_{(1)}) \o \a_W^{-j-2}(w_{(0)})) ) \\
&&= (\a_U^{-1}(u) \c \a_H^{m}(v_{(1)}) \otimes \a_V^{-2}(v_{(0)}) \c \a_H^{m-j-1}(w_{(1)}))\otimes \a_W^{-1}(w_{(0)}) \\
&&= a^{-1}_{U,V, W}( \a_U^{i}(u)\c \a_H^{m+i+1}(v_{(1)}) \o
(\a_V^{-2}(v_{(0)}) \c \a_H^{m-j-1}(w_{(1)}) \o \a_W^{-j-2}(w_{(0)}) ) ) \\
&&= (a^{-1}_{U,V, W} \ci(id_U \o \xi_{V,W})) (\a_U^{i}(u)\c \a_H^{m+i+1}(v_{(1)}) \o ( \a_V^{-1}(v_{(0)}) \o \a_W^{-j-1}(w) )) \\
&&= (a^{-1}_{U,V, W} \ci (id_U \o \xi_{V,W}) \ci a_{U,W, V} \ci (\xi_{U,V} \o id_W) ) ((u\otimes u)\otimes w),
\end{eqnarray*}
the conclusion holds.
\end{proof}

\begin{corollary}
For all integer $q \in \mathbb{Z}$, if we define the linear form $\zeta \in ( ({H^\ast}^{cop}\o H) \o ({H^\ast}^{cop}\o H) )^\ast$ by
$$
\zeta(f\o x,f' \o y):= f(\a_H^{q}(y)) \v_H(x)f'(1_H),
$$
then $\zeta$ satisfies the following $\mathcal{D}$-type equation
$$
\zeta^{12}\ast \zeta^{23} = \zeta^{23}\ast \zeta^{12},
$$
where $\zeta^{12}(a,b,c) = \zeta(a,b)\v_{{H^\ast}^{cop}\o H}(c)$, $\zeta^{23}(a,b,c) = \v_{{H^\ast}^{cop}\o H}(a)\zeta(b,c)$ for any $a,b,c \in {H^\ast}^{cop}\o H$.
\end{corollary}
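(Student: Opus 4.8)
The plan is to verify the identity $\zeta^{12} \ast \zeta^{23} = \zeta^{23} \ast \zeta^{12}$ directly, by evaluating both linear forms on an arbitrary $a \o b \o c \in (K \o K) \o K$, where I abbreviate $K := {H^\ast}^{cop} \o H$ for the Hom-Long-codouble. Recall from the preceding discussion that $K$ is a Hom-bialgebra whose inducing monoidal Hom-cotwistor $\varphi$ is the flip; consequently its comultiplication decouples as $\D_K(f \o x) = (f_1 \o x_1) \o (f_2 \o x_2)$, where $\D_H(x) = x_1 \o x_2$ and $\overline{\D}(f) = f_1 \o f_2$ is dual to the multiplication of $H$, i.e. $f_1(u) f_2(v) = f(\a_H^{-2}(vu))$. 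Its counit is $\v_K(f \o x) = \v_H(x) f(1_H)$ and its Hom-structure map is $\a_K = \a_{H^\ast} \o \a_H$ with $\a_{H^\ast}(f) = f \ci \a_H^{-1}$.

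First I would unfold the convolution on $K^{\o 3}$. Since $\D_{K^{\o 3}}(a \o b \o c) = (a_1 \o b_1 \o c_1) \o (a_2 \o b_2 \o c_2)$, inserting the definitions of $\zeta^{12}$ and $\zeta^{23}$ gives
$$(\zeta^{12} \ast \zeta^{23})(a,b,c) = \sum \zeta(a_1,b_1)\,\v_K(a_2)\,\zeta(b_2,c_2)\,\v_K(c_1).$$
The two counit factors can be absorbed using the Hom-coalgebra counit law of $K$: since $\sum a_1 \v_K(a_2) = \a_K(a)$ and $\sum \v_K(c_1) c_2 = \a_K(c)$, and $\zeta$ is bilinear, the right-hand side collapses to $\sum_b \zeta(\a_K(a),b_1)\,\zeta(b_2,\a_K(c))$. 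Running the same reduction on the other side, with $\sum \v_K(a_1) a_2 = \a_K(a)$ and $\sum c_1 \v_K(c_2) = \a_K(c)$, yields $(\zeta^{23} \ast \zeta^{12})(a,b,c) = \sum_b \zeta(b_1,\a_K(c))\,\zeta(\a_K(a),b_2)$.

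It then remains to show the two reduced expressions coincide. Writing $a = f \o x$, $b = g \o y$, $c = p \o z$ and inserting the definition of $\zeta$, each side becomes a product of four scalars. For the left-hand side the $g$-factors combine, by the formula for $\overline{\D}(g)$, into $g_1(1_H)\,g_2(\a_H^{q+1}(z)) = g(\a_H^{-2}(\a_H^{q+1}(z)\,1_H)) = g(\a_H^q(z))$ using the unit axiom $w 1_H = \a_H(w)$, while the $y$-factors combine, by the counit law $\sum y_1 \v_H(y_2) = \a_H(y)$, into $f(\a_H^q(y))$; the surviving factors $\v_H(x)$ and $p(1_H)$ are untouched, so the left-hand side equals $f(\a_H^q(y))\,\v_H(x)\,g(\a_H^q(z))\,p(1_H)$. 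The analogous bookkeeping for the right-hand side produces $g_1(\a_H^{q+1}(z))\,g_2(1_H) = g(\a_H^{-2}(1_H\,\a_H^{q+1}(z))) = g(\a_H^q(z))$ and, via $\sum \v_H(y_1) y_2 = \a_H(y)$, again $f(\a_H^q(y))$, so it equals the same scalar, which finishes the proof.

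The only delicate points, where I would spend the care, are the exponent bookkeeping of the Hom-structure map $\a_H$ and the observation that the order-reversal built into ${H^\ast}^{cop}$ (the appearance of $vu$ rather than $uv$ in $\overline{\D}$) does no harm here: in both reductions one of the two arguments fed to $g_1,g_2$ is always $1_H$, so $vu$ and $uv$ agree up to the unit axiom, and $\a_H(1_H) = 1_H$ keeps the $\a_H$-twists aligned on the two sides. Alternatively, the statement can be read as the evaluation of the categorical $\mathcal{D}$-solution $\xi$ of the previous theorem on the regular Hom-Long dimodule corresponding to $K$ under the monoidal isomorphism $\mathcal{L}^H_H \cong Corep(K)$; but the direct computation above is the most transparent route.
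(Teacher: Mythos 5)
Your proof is correct and is essentially the paper's own argument: both verify the identity by evaluating $\zeta^{12}\ast\zeta^{23}$ and $\zeta^{23}\ast\zeta^{12}$ on a generic element of $K^{\o 3}$ (writing $K={H^\ast}^{cop}\o H$ as you do), using the componentwise comultiplication of $K^{\o 3}$, the Hom-counit law, and the duality pairing, and both sides collapse to the same scalar $f(\a_H^{q}(y))\,\v_H(x)\,g(\a_H^{q}(z))\,p(1_H)$. Your only organizational difference is that you absorb the two counit factors first and then expand in components, whereas the paper expands everything in one step; this is the same computation in substance.

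One convention mismatch should be repaired. In this paper the convolution of linear forms on a Hom-coalgebra is the dual Hom-algebra product, which twists its arguments: $(\mu\ast\nu)(w)=\mu(\a^{-2}(w_1))\,\nu(\a^{-2}(w_2))$; this is the formula $(f\ast g)(y)=f(\a_A^{-2}(y_1))g(\a_A^{-2}(y_2))$ used throughout Sections 5 and 6, and it is exactly what the paper's proof of this corollary uses (every argument there carries $\a_{H^\ast}^{-2}$ or $\a_H^{-2}$). Your unfolding $(\zeta^{12}\ast\zeta^{23})(a,b,c)=\sum\zeta(a_1,b_1)\,\v_K(a_2)\,\zeta(b_2,c_2)\,\v_K(c_1)$ silently uses the untwisted convolution. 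The discrepancy is harmless here, but for a reason that must be stated: since $\a_H(1_H)=1_H$ and $\v_H\ci\a_H=\v_H$, one checks $\zeta\ci(\a_K\o\a_K)=\zeta$ and $\v_K\ci\a_K=\v_K$, so $\zeta^{12}$ and $\zeta^{23}$ are unchanged by applying $\a_K$ to every argument, and hence the twisted and untwisted convolutions agree on them. With that one-line observation added, your computation establishes precisely the paper's statement. Your second convention issue --- you take $\D_K(f\o x)=(f_1\o x_1)\o(f_2\o x_2)$ with $f_1(u)f_2(v)=f(\a_H^{-2}(vu))$, whereas the codouble comultiplication of Corollary 5.10 places $f_2$ in the first tensorand (equivalently, pairs the legs in the order $uv$) --- you correctly identified and correctly dismissed: one of the two arguments fed to $f_1,f_2$ is always $1_H$, so the order of the product inside $f$ is immaterial by the Hom-unit axiom.
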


\begin{proof}
For any $f \o x, f' \o y, f'' \o z \in {H^\ast}^{cop}\o H$, we compute
\begin{eqnarray*}
&&~~~(\zeta^{12}\ast \zeta^{23}) (f \o x, f' \o y, f'' \o z) \\
&&=\zeta^{12} (\a_{H^\ast}^{-2}(f_2) \o \a_H^{-2}(x_1), \a_{H^\ast}^{-2}(f'_2) \o \a_H^{-2}(y_1), \a_{H^\ast}^{-2}(f''_2) \o \a_H^{-2}(z_1) )\\
&&~~~~~~ \zeta^{23}(\a_{H^\ast}^{-2}(f_1) \o \a_H^{-2}(x_2), \a_{H^\ast}^{-2}(f'_1) \o \a_H^{-2}(y_2), \a_{H^\ast}^{-2}(f''_1) \o \a_H^{-2}(z_2) )\\
&&= \a_{H^\ast}^{-2}(f_2) ( \a_H^{q-2}(y_1) ) \v_H(x_1) f'_2( 1_H ) \v_H(z_1) f''_2( 1_H )
\a_{H^\ast}^{-2}(f'_1) ( \a_H^{q-2}(z_2) ) \v_H(y_2) f''_1( 1_H ) \v_H(x_2) f_1( 1_H )\\
&&= f(\a_H^{q}(y)) \v_H(x) f'(\a_H^{q}(z)) f''( 1_H ),
\end{eqnarray*}
and
\begin{eqnarray*}
&&~~~(\zeta^{23}\ast \zeta^{12}) (f \o x, f' \o y, f'' \o z) \\
&&=\zeta^{23} (\a_{H^\ast}^{-2}(f_2) \o \a_H^{-2}(x_1), \a_{H^\ast}^{-2}(f'_2) \o \a_H^{-2}(y_1), \a_{H^\ast}^{-2}(f''_2) \o \a_H^{-2}(z_1) )\\
&&~~~~~~ \zeta^{12}(\a_{H^\ast}^{-2}(f_1) \o \a_H^{-2}(x_2), \a_{H^\ast}^{-2}(f'_1) \o \a_H^{-2}(y_2), \a_{H^\ast}^{-2}(f''_1) \o \a_H^{-2}(z_2) )\\
&&= \v_H(x_1) f_2( 1_H ) \a_{H^\ast}^{-2}(f'_2) ( \a_H^{q-2}(z_1) ) \v_H(y_1) f''_2( 1_H )
\a_{H^\ast}^{-2}(f_1) ( \a_H^{q-2}(y_2) ) \v_H(x_2) f'_1( 1_H )\v_H(z_2) f''_1( 1_H )\\
&&= f(\a_H^{q}(y)) \v_H(x) f'(\a_H^{q}(z)) f''( 1_H ),
\end{eqnarray*}
thus the conclusion holds.
\end{proof}

\vskip 0.3cm
\subsection{\textbf{Hom-Yetter-Drinfeld modules and Hom-Yang-Baxter equation}}
\def\theequation{6.3.\arabic{equation}}
\setcounter{equation} {0} \hskip\parindent
\vskip 0.4cm

Let $(H,\a_H)$ be a finite dimensional Hom-Hopf algebra over $\k$, $p$ be an intege in $\mathbb{Z}$.
Recall from [\cite{CoDr}, Proposition 3.3] that if $(U,\a_U)$ is both a right $H$-Hom-module and a right $H$-Hom-comodule, and satisfies the following compatibility condition:
\begin{eqnarray}
\rho(u \c h) = u_{(0)}\c \a_H^{-1}(h_{(21)})\o S(\a_H^{p-2}(h_1))(\a_H^{-1}(u_{(1)}) \a_H^{p-4}(h_{22})),
\end{eqnarray}
for all $u \in U$ and $h \in H$, then we call $(U,\a_U)$ a \emph{$p$-th right-right Yetter-Drinfeld module of $H$}.
We denote by $\mathcal{YD}^H_H(p)$ the category of $p$-th right-right Yetter-Drinfeld modules, morphisms being $H$-linear and $H$-colinear.

Note that $\mathcal{YD}^H_H(p)$ is a braided monoidal category with the following structures (see [\cite{CoDr}, Proposition 3.5 and Theorem 3.6]):

$\bullet$ the associativity constraint and the unit constraint in $\mathcal{YD}^H_H(p)$ is same as $\overline{\mathcal{H}}^{i,j}(Vec_\Bbbk)$;

$\bullet$ for any $(U,\a_U), (V, \a_V) \in \mathcal{YD}^H_H(p)$, the Hom-module and Hom-comodule structure on $U \o V$ are given by
\begin{eqnarray*}
(u \o v)\c h:= u \c \a^{-i-2}(h_1) \o v \c \a^{-j-2}(h_2),\\
\rho^{U \o V}(u \o v):=u_{(0)} \o v_{(0)} \o \a^i(u_{(1)})\a^j(v_{(1)});
\end{eqnarray*}

$\bullet$ the tensor product of two arrows $f,g \in \mathcal{YD}^H_H(p)$ is given by the
tensor product of $\k$-linear morphisms;

$\bullet$ for any $U,V \in \mathcal{YD}^H_H(p)$, $u \in U$, $v \in V$, the braiding is given by
\begin{eqnarray*}
\t_{U,V}:U \o V\rightarrow V \o U, ~~~~u \o v\mapsto \a_V^{j-i-1}(v_{(0)}) \o \a_U^{i-j-1}(u) \c \a^{-p}(v_{(1)}).
\end{eqnarray*}

Let $m$ be any integer in $\mathbb{Z}$. If we define
\begin{equation*}
\begin{split}
\Phi: H\otimes H  &\longrightarrow H\otimes H \\
c\otimes a &\longmapsto \a_H^{-2}(a_{21})\otimes S(\a_H^{m-2}(a_1) (\a_H^{-2}(c) \a_H^{m-4}(a_{22})),
\end{split}
\end{equation*}
where $c,a \in H$, then it is a direct computation to check that $(H,H,\Phi)$ is a Hom-monoidal entwining datum, thus $\mathcal{M}(\Phi)^H_H(n)$ is a monoidal category.
Further, for any $n \in \mathbb{Z}$, $(U,\a_U)$ is an object in $\mathcal{M}(\Phi)^H_H(n)$ means that
$$
\rho(u \c h) = u_{(0)}\c \a^{-1}(h_{(21)})\o S(\a^{m-n-2}(h_1))(\a^{-1}(u_{(1)}) \a^{m-n-4}(h_{22})),~~u \in U,~~h \in H,
$$
which implies $\mathcal{M}(\Phi)^H_H(n) = \mathcal{YD}^H_H(m-n)$.

Recall from Proposition 5.8, $\Phi$ could induce the following monoidal Hom-cotwistor:
\begin{equation*}
\begin{split}
\varphi: {H^\ast}^{cop}\otimes H  &\longrightarrow H\otimes {H^\ast}^{cop} \\
f\otimes h &\longmapsto \sum f(\a_H^{-2}({e_i}_{21})) S(\a_H^{m-2}({e_i}_{1})) ( \a_H^{-2}(h) \a_H^{m-4}({e_i}_{22}) ) \o e^i,
\end{split}
\end{equation*}
where $f \in H^\ast$, $h \in H$, $\sum e_i \o e^i$ are the dual bases of $H$ and ${H}^{\ast}$. Hence ${H^\ast}^{cop}\otimes H$ is a Hom-bialgebra under the following structures:
\begin{eqnarray*}
&\widehat{\D}(f\o h):=\sum (f_1(\a_H^{-2}({e_i}_{21}))f_2 \o S(\a_H^{m-2}({e_i}_{1})) ( \a_H^{-2}(h_1) \a_H^{m-4}({e_i}_{22}) )) \o (e^i \o h_2),\\
&\a_{{H^\ast}^{cop} \o H}=\a_{H^\ast} \o \a_H,~~\widehat{\v}(f \o h):=\v_H(h)f(1_H),\\
&(f \o x)(f' \o y):=f\ast f' \o xy,~~~~~1_{{H^\ast}^{cop}\otimes H}:=\v_H \o 1_H,
\end{eqnarray*}
where $f,f' \in {H^\ast}$, $x,y,h\in H$. We call this Hom-bialgebra ${H^\ast}^{cop}\otimes H$ \emph{the $m$-th Dinfeld codouble of $H$} (see [\cite{CoDr}, Definition 5.2]).
Furthermore, since Corollary 5.10, for any integer $p$, the category of $p$-th Yetter-Drinfeld modules is isomorphic to the corepresentations of ${H^\ast}^{cop}\otimes H$ as monoidal categories.

\begin{corollary}
The family of maps $\t_{U,V}$ for any $U, V \in \mathcal{YD}^H_H(p)$ is a solution of the following Hom-Yang-Baxter equation:
\begin{eqnarray*}
&&(id_{W} \o \t_{U,V}) \circ a_{W, U, V} \circ (\t_{U,W} \o id_V)\circ
a^{-1}_{U,W,V}\circ (id_U \o \t_{V,W}) \circ a_{U,V,W}\\
&&=a_{W,V,U}\circ (\t_{W,V} \o id_U)\circ a^{-1}_{W,V,U} \circ (id_{V} \o \t_{U,W})\circ a_{V,U,W}
\circ (\t_{U,V} \o id_W).
\end{eqnarray*}
\end{corollary}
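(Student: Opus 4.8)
The plan is to read the displayed identity as the Yang--Baxter (braid) equation for the braiding $\t$ and to derive it abstractly from the braided monoidal structure of $\mathcal{YD}^H_H(p)$, which was recalled above from [\cite{CoDr}, Proposition 3.5 and Theorem 3.6]. In a braided monoidal category the braid equation is a purely formal consequence of a single hexagon axiom together with the naturality of $\t$, so I would not expand the explicit formula for $\t_{U,V}$ (which would force a lengthy bookkeeping of the Hom-twists $\a^k$); the three-line categorical argument below is cleaner and avoids all such computation.

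First I would invoke the hexagon identity for $\t$, namely $\t_{U\o V,\,W}=a_{W,U,V}\circ(\t_{U,W}\o id_V)\circ a^{-1}_{U,W,V}\circ(id_U\o\t_{V,W})\circ a_{U,V,W}$, to recognise the entire left-hand side of the corollary as the composite $(id_W\o\t_{U,V})\circ\t_{U\o V,\,W}$. Next I would apply the naturality of the braiding to the morphism $\t_{U,V}\colon U\o V\to V\o U$ (with the third tensor factor $W$ held fixed): this gives $(id_W\o\t_{U,V})\circ\t_{U\o V,\,W}=\t_{V\o U,\,W}\circ(\t_{U,V}\o id_W)$. Finally I would expand $\t_{V\o U,\,W}$ by the same hexagon identity, which turns the right factor into $a_{W,V,U}\circ(\t_{V,W}\o id_U)\circ a^{-1}_{V,W,U}\circ(id_V\o\t_{U,W})\circ a_{V,U,W}$, and the composite is then exactly the right-hand side of the corollary. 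The three steps (hexagon, naturality, hexagon) close the argument.

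The only real obstacle is clerical rather than conceptual: one must match the subscripts of the associativity constraints and of the inner braidings to the source and target of each arrow, since in the non-strict structure $a_{X,Y,Z}$ carries the twists $\a^{i+1}_X$ and $\a^{-j-1}_Z$ and the coherence is genuinely $\a$-dependent. This matching is guaranteed once and for all by the fact that $\mathcal{YD}^H_H(p)$ is braided monoidal; moreover the monoidal isomorphism of Corollary 5.10 with $Corep^{i,j}({H^\ast}^{cop}\o H)$ transports the associators and the braiding coherently, so the hexagon holds with exactly the associator insertions displayed, and the equation follows as stated.
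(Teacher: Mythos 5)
Your proof is correct and is essentially the paper's own: the paper's proof consists of the single word ``Straightforward,'' and since it has just recalled from \cite{CoDr} that $\mathcal{YD}^H_H(p)$ is a braided monoidal category with braiding $\tau$, the straightforward argument is precisely your hexagon--naturality--hexagon derivation, which is valid in any braided monoidal category and requires no expansion of the explicit Hom-twisted formula for $\tau_{U,V}$. One small caveat: your derivation produces $\tau_{V,W}\otimes id_U$ and $a^{-1}_{V,W,U}$ in the right-hand composite, which is the only choice that type-checks, whereas the corollary as printed has $\tau_{W,V}\otimes id_U$ and $a^{-1}_{W,V,U}$; so your claimed ``exact match'' with the displayed right-hand side holds only after correcting what are evidently two index typos in the paper's statement.
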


\begin{proof}
Straightforward.
\end{proof}

\begin{proposition}
${H^\ast}^{cop}\otimes H$ has a coquasitriangular structure
\begin{eqnarray*}
\xi(f \o x, f' \o y):=f(\a^{-m}(y))\v(x)f'(1_H).
\end{eqnarray*}
where $f,g \in H^\ast$, $x,y\in H$. Furthermore, $Corep^{i,j}({H^\ast}^{cop}\otimes H)$ and $\mathcal{YD}^H_H(p)$ are isomorphic as braided monoidal
categories.
\end{proposition}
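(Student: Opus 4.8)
The plan is to split the statement into two parts: first, that $\xi$ is a coquasitriangular structure on the Hom-bialgebra ${H^\ast}^{cop}\o H$ (the $m$-th Drinfeld codouble), and second, that the monoidal isomorphism $Corep^{i,j}({H^\ast}^{cop}\o H)\cong \mathcal{YD}^H_H(p)$ furnished by Corollary 5.10, together with the identification $\mathcal{M}(\Phi)^H_H(n)=\mathcal{YD}^H_H(m-n)$, so that $p=m-n$, intertwines the braiding induced by $\xi$ with the braiding $\t$ on $\mathcal{YD}^H_H(p)$ recorded above. Recall that a coquasitriangular structure is a convolution-invertible linear form $\xi$ on the tensor square satisfying the two hexagon-type compatibilities with $\widehat{\D}$ together with the quasi-cocommutativity relation tying $\xi$ to the multiplication; these are exactly the conditions under which $Corep^{i,j}({H^\ast}^{cop}\o H)$ becomes braided, with braiding $c_{U,V}(u\o v)= v_{<0>}\o u_{<0>}\,\xi(u_{<1>}\o v_{<1>})$ up to the structural $\a$-twists inherited from $\overline{\mathcal{H}}^{i,j}(Vec_\Bbbk)$.

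For the first part I would verify the coquasitriangular axioms for $\xi(f\o x,f'\o y)=f(\a^{-m}(y))\v(x)f'(1_H)$ directly, using the explicit multiplication $(f\o x)(f'\o y)=f\ast f'\o xy$, the counit $\widehat{\v}(f\o h)=\v_H(h)f(1_H)$, and the comultiplication $\widehat{\D}$ of the $m$-th Drinfeld codouble displayed before this proposition. The factors $\v(x)$ and $f'(1_H)$ collapse most of the summations, so each hexagon reduces, after evaluation on elements of $H$, to the Hom-coassociativity and Hom-associativity of $H$ together with the defining properties of $S$. The convolution inverse of $\xi$ is produced from the antipode, expected to be of the form $\bar{\xi}(f\o x,f'\o y)=f(S(\a^{-m}(y)))\v(x)f'(1_H)$, and here the hypothesis that $H$ is finite dimensional is what guarantees $S$, hence $\bar{\xi}$, is available. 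I expect this to be the main computational step, though a routine one.

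For the second part I would compute the braiding $c_{U,V}$ induced by $\xi$ on a pair of codouble-comodules and transport it through the identification of Corollary 5.10. Under that identification the ${H^\ast}^{cop}\o H$-coaction on $U$ decomposes into the $H$-Hom-comodule coaction $u\mapsto u_{(0)}\o u_{(1)}$ and the $H$-Hom-action obtained by pairing the $H^\ast$-leg against $H$; when $\xi$ is evaluated, the term $f(\a^{-m}(y))$ pairs the comodule leg of one factor with the module leg of the other, reproducing the action $\c\,\a^{-p}(v_{(1)})$ with $p=m-n$, while $\v(x)f'(1_H)$ annihilates the remaining legs. Matching the surviving $\a$-exponents against $\t_{U,V}(u\o v)=\a_V^{j-i-1}(v_{(0)})\o \a_U^{i-j-1}(u)\c \a^{-p}(v_{(1)})$ then shows $c_{U,V}$ and $\t_{U,V}$ coincide under the isomorphism. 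The bookkeeping of the associativity and unit twists of $\overline{\mathcal{H}}^{i,j}(Vec_\Bbbk)$ is the only delicate point; once the exponents line up, coherence of the braiding is inherited from that of $\mathcal{YD}^H_H(p)$, and the two categories are isomorphic as braided monoidal categories.
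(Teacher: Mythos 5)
Your proposal is correct in outline, but it takes a genuinely different route from the paper: the paper does not prove this proposition at all, it simply cites [\cite{CoDr}, Definition 5.2 and Theorem 5.4], where the coquasitriangularity of the Drinfeld codouble and the braided monoidal isomorphism were established. What you propose is a self-contained verification: checking the hexagon and quasi-cocommutativity axioms for $\xi$ against the explicit smash-coproduct comultiplication $\widehat{\D}$, exhibiting the convolution inverse via the antipode, and then transporting the braiding $c_{U,V}(u\o v)=v_{<0>}\o u_{<0>}\,\xi(u_{<1>}\o v_{<1>})$ through the identification of Corollary 5.10 to match it with $\t_{U,V}$. This buys transparency (one sees exactly where the codouble structure, the antipode, and the exponent $p=m-n$ enter), at the cost of the lengthy Hom-exponent bookkeeping that the paper avoids by outsourcing; conversely, the paper's citation is economical but leaves the reader to consult the earlier work. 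Two small corrections to your write-up: finite dimensionality of $H$ is not what guarantees the existence of $S$ (that is part of the Hom-Hopf hypothesis); it is what makes the dual bases $e_i$, $e^i$, hence the Hom-bialgebra structure on ${H^\ast}^{cop}$ and the cotwistor $\varphi$, available, and without it $\bar{\xi}$ could not even be written on the codouble. Also, the matching of $\a$-exponents between $f(\a^{-m}(y))$ and the braiding's $\a^{-p}(v_{(1)})$ with $p=m-n$ is precisely where the $\a^n$-twists built into the functors $P_n$, $R_n$ of Propositions 4.7 and 5.5 must cancel; you assert this rather than verify it, so in a full write-up that computation is the one step that cannot be waved through.
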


\begin{proof}
See [\cite{CoDr}, Definition 5.2 and Theorem 5.4].
\end{proof}

\

\begin{center}
 {\bf ACKNOWLEDGEMENT}
\end{center}
The work was partially supported by the NSF of China (NO. 11371088), and the NSF of Qufu Normal University (NO. xkj201514).
\\

 \end{document}